\definecolor{antiquefuchsia}{rgb}{0.57, 0.36, 0.51}
\definecolor{azure}{rgb}{0.0, 0.5, 1.0}
\renewcommand*{\backref}[1]{}
\renewcommand*{\backrefalt}[4]{%
    \ifcase #1 (Not cited.)%
    \or        (Cited on page~#2.)%
    \else      (Cited on pages~#2.)%
    \fi}
\def\th@plain{%
	\thm@notefont{}
	\itshape 
}
\def\th@definition{%
	\thm@notefont{}
	\normalfont 
}
\numberwithin{equation}{section}
\newtheorem{theorem}{Theorem}[section]
\newtheorem{lemma}[theorem]{Lemma}
\newtheorem{proposition}[theorem]{Proposition}
\newtheorem{corollary}[theorem]{Corollary}
\theoremstyle{definition}
\newtheorem{definition}[theorem]{Definition}
\newtheorem{remark}[theorem]{Remark}
\newcommand{\n}{\mathbb{N}}
\newcommand{\s}{\mathbb{S}}
\newcommand{\z}{\mathbb{Z}}
\renewcommand{\r}{\mathbb{R}}
\renewcommand{\c}{\mathbb{C}}
\DeclareMathOperator{\Area}{Area}
\DeclareMathOperator{\End}{End}
\DeclareMathOperator{\E}{E}
\newcommand{\PSL}{\operatorname{PSL}}
\newcommand{\PSO}{\operatorname{PSO}}
\title[Area rigidity for the regular representation of surface groups]{Area rigidity for the regular representation of surface groups}
\author[Riccardo Caniato]{Riccardo Caniato}
\address{California Institute of Technology, Department of Mathematics, 1200 E California Blvd, MC 253-37, CA 91125, United States of America}
\email{rcaniato@caltech.edu}
\author[Xingzhe Li]{Xingzhe Li}
\address{Cornell University, Department of Mathematics, 310 Malott Hall, Ithaca, NY 14853, United States of America}
\email{xl833@cornell.edu}
\author[Antoine Song]{Antoine Song}
\address{California Institute of Technology, Department of Mathematics, 1200 E California Blvd, MC 253-37, CA 91125, United States of America}
\email{aysong@caltech.edu}
\date{\today} 
\begin{document}

\begin{abstract} 
Let $\tilde{\Sigma}$ be the universal cover of a closed surface $\Sigma$ of genus at least $2$. We characterize all equivariantly area-minimizing maps from $\tilde{\Sigma}$ to a Hilbert sphere, which are equivariant with respect to an isometric action of $\pi_1(\Sigma)$ weakly equivalent to the regular representation.  
As part of our proof, we classify all minimal surfaces in Hilbert spheres with constant negative Gaussian curvature. This builds on earlier results of E. Calabi, K. Kenmotsu, R. Bryant.
\end{abstract}

\maketitle
\tableofcontents
\allowdisplaybreaks
%
%
\section{Introduction}\label{sec: introduction} 
Consider a closed oriented surface $\Sigma$ with fundamental group $\pi_1(\Sigma)$ and universal cover $\tilde{\Sigma}$. Given an isometric action $\rho$ of $\pi_1(\Sigma)$ on a Hilbert sphere $\s_H$, let $\mathscr{H}_{\Sigma,\rho}$ be the family of smooth $\pi_1(\Sigma)$-equivariant maps $\tilde{\Sigma} \to \s_H.$
\emph{For which $\rho$ does $\mathscr{H}_{\Sigma,\rho}$ contain an equivariantly area-minimizing map? For which $\rho$ are such maps unique?} We give a complete answer to these questions for a large family of isometric actions $\rho$, namely those which are ``weakly equivalent'' to the regular representation of $\pi_1(\Sigma)$. 

Our result is, to our knowledge, the first rigidity result of this kind for equivariant minimal surfaces in spheres. To give some perspective,  the rigidity of equivariant harmonic maps and minimal surfaces in nonpositively curved spaces is a well-studied subject \cite{Siu80, Corlette92, GS92, MSY93, JY93, KS93, KS97, Wang98, DMV11, BDM24}, and the basis of fundamental developments in geometry and topology.
On the other hand, when the ambient space is positively curved, like Hilbert spheres, rigidity fails in general. However, in \cite{Song25}, the third-named author recently found a probabilistic rigidity phenomenon: the induced metric of random equivariant harmonic maps and minimal surfaces in Euclidean spheres of high-dimensions tends to concentrate around a unique rescaled hyperbolic metric. The uniqueness of the limit metric was obtained by an intrinsic rigidity theorem for certain area-minimizing maps into Hilbert spheres.  
This paper builds on and substantially improves this intrinsic rigidity result.
The other main ingredient is the new classification of minimal surfaces with constant negative curvature in Hilbert spheres. This part is  inspired by and unifies earlier contributions of Calabi, Kenmotsu, Bryant \cite{Calabi,Kenmotsu,Bryant} on constant curvature minimal surfaces in Euclidean spheres.

\subsection{Statement of the main results}

Let $\Sigma$ be a closed oriented surface. 
The most important isometric action of $\pi_1(\Sigma)$ (and in fact any discrete group) on a Hilbert sphere is its \emph{regular representation}.
The regular representation $\lambda_{\pi_1(\Sigma)}$  of $\pi_1(\Sigma)$  is the unitary representation on $\ell^2(\pi_1(\Sigma))$ defined as follows: for any $g,x\in \pi_1(\Sigma)$ and $F\in \ell^2(\pi_1(\Sigma))$,
$$(\lambda_{\pi_1(\Sigma)}(g)\cdot F)(x):=F(g^{-1}x).$$
We will consider unitary representations $\rho:\Gamma\to \End(H)$ which are \emph{weakly equivalent} to $\lambda_{\pi_1(\Sigma)}$.
Weak equivalence is a standard notion of equivalence which is less restrictive than the usual notion of \emph{strong equivalence} for unitary representations\footnote{Two unitary representations are strongly equivalent when they are equal up to conjugation by a linear isometry.}. It will be defined in Section \ref{subsec: weak equivalence}, but roughly speaking, two unitary representations are weakly equivalent if one can approximate arbitrarily well the other. The family of representations weakly equivalent to $\lambda_{\pi_1(\Sigma)}$ is  rich, see \cite{PS86, Szwarc88, KS92, Adams94, Kuhn94, Lukasz16} and Section \ref{subsec: weak equivalence} for examples.

Both our main results will also involve the representation theory of the Lie group $\mathrm{PSL}(2, \mathbb{R})$, which is the oriented isometry group of the hyperbolic plane. Irreducible unitary representations of $\mathrm{PSL}(2, \mathbb{R})$ have been classified by Gelfand, Naimark, Bargmann, Harish‑Chandra \cite{GN46, Bargmann47, Chandra52}. We will only consider irreducible unitary representations of $\mathrm{PSL}(2, \mathbb{R})$ of ``class one'', namely those for which there exists a nonzero vector fixed by the compact subgroup $\mathrm{PSO(2)}$. The nontrivial class one representations of $\mathrm{PSL}(2, \mathbb{R})$ are organized into two continuous families called the \textit{principal series} $\{\rho_s\}_{s\in i\mathbb{R}}$ and the \textit{complementary series} $\{\rho_s\}_{s\in (-\frac{1}{2},\frac{1}{2})}$ (see Theorem \ref{thm: irreducible representations}). 
Given any $s\in i\r\cup (-\frac{1}{2},\frac{1}{2})$ and a nonzero $\PSO(2)$-invariant vector $v$ with respect to $\rho_s$, we denote by $\mathscr{O}_{s,v}$ the orbit of $v$ with respect to $\rho_s$, i.e.
\begin{align}\label{orbit}
    \mathscr{O}_{s,v}:=\{x \mbox{ such that }  x=\rho_s(A)v \mbox{ for some } A\in\PSL(2,\r)\}.
\end{align}
As shown in Section \ref{sec: minimality of special orbits}, each $\mathscr{O}_{s,v}$ is a minimal surface in $\s_H$, with constant negative Gaussian curvature.

Before stating our first theorem, here are some notations. The fundamental group $\pi_1(\Sigma)$ of the surface $\Sigma$ acts by deck transformation on the universal cover $\tilde{\Sigma}$ of $\Sigma$. 
Let $D_\Sigma\subset \tilde{\Sigma}$ denote any Borel fundamental domain for this action. Let $\chi(\Sigma)$ be the Euler characteristic of $\Sigma$.
Given a Hilbert space $H$, its unit sphere is denoted by $\mathbb{S}_H$ and the Hilbert metric is denoted by $g_H$. For a unitary representation $\rho:\pi_1(\Sigma)\to \End(H)$, $\rho$ induces an isometric action of $\pi_1(\Sigma)$ on the unit sphere $\mathbb{S}_H$.
As above, we define $\mathscr{H}_{\Sigma,\rho}$ as the set of smooth $\pi_1(\Sigma)$-equivariant maps $\tilde{\Sigma}\to \s_H.$ A map in $\mathscr{H}_{\Sigma,\rho}$ is called \textit{full} if its image generates the whole Hilbert space $H$. 
If a unitary representation $\rho$ of $\pi_1(\Sigma)$ factors as
$$\pi_1(\Sigma) \to \mathrm{PSL}(2,\mathbb{R})\to \End(H)$$
where the first arrow is a Fuchsian embedding and the second arrow $\tau$ is a unitary representation of the Lie group $\mathrm{PSL}(2,\mathbb{R})$, we will call such $\rho$ a \emph{$\tau$-Fuchsian} representation of $\pi_1(\Sigma)$ in analogy with the terminology used in higher Teichm\"{u}ller theory.

Our area rigidity theorem is the following:
\begin{theorem}[Area rigidity]\label{thm:area rigidity} 
Let $\Sigma$ be a closed oriented surface of genus $g\ge 2$. Let $\rho:\pi_1(\Sigma)\to \End(H)$ be a unitary representation weakly equivalent to $\lambda_{\pi_1(\Sigma)}$. 
Consider a full immersion $f \in \mathscr{H}_{\Sigma,\rho}$. Then
\begin{align}\label{Equation: fundamental inequality}
    \Area(\mathbf{D}_{\Sigma},f^* g_H) \geq  \frac{\pi}{4}|\chi(\Sigma)|.
\end{align}
Moreover, the following are equivalent:
\begin{enumerate}[(i)]
    \item equality holds: $\Area(\mathbf{D}_{\Sigma},f^* g_H) =  \frac{\pi}{4}|\chi(\Sigma)|$,
    \item $f$ is equivariantly area-minimizing: for any $\hat{f}\in \mathscr{H}_{\Sigma,\rho}$, $\Area(\mathbf{D}_{\Sigma},\hat{f}^* g_H) \geq \Area(\mathbf{D}_{\Sigma},f^* g_H)$, 
    \item $\rho$ is $\rho_0$-Fuchsian, and $f(\tilde{\Sigma}) = \mathscr{O}_{0, v}$ for some $\PSO(2)$-invariant unit vector $v$. 
\end{enumerate}
\end{theorem}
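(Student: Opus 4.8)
The plan is to prove the inequality first and then extract the rigidity from its equality case. The starting point is a tautology: for the induced metric $g:=f^{*}g_{H}$ one has $|df|_{g}^{2}=\operatorname{tr}_{g}g=2$ pointwise, so the position section $f$, viewed as a section of the flat Hilbert bundle $E_{\rho}=\tilde{\Sigma}\times_{\rho}H\to\Sigma$, has Rayleigh quotient identically equal to $2$; hence the bottom of the spectrum of the twisted Laplacian satisfies $\lambda_{0}(E_{\rho},g)\le 2$. The first key step is to convert this into a scalar statement. Since $\rho$ is weakly equivalent to $\lambda_{\pi_1(\Sigma)}$, the twisted and untwisted bottoms of spectrum coincide, $\lambda_{0}(E_{\rho},g)=\lambda_{0}(\tilde{\Sigma},g)$, where the latter is the bottom of the $L^{2}$-spectrum of $\Delta_{g}$ on the universal cover (weak containment $\rho\prec\lambda_{\pi_1(\Sigma)}$ gives $\ge$, the reverse containment gives $\le$). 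Thus every full immersion in $\mathscr{H}_{\Sigma,\rho}$ produces a $\pi_1(\Sigma)$-invariant metric $g$ with $\lambda_{0}(\tilde\Sigma,g)\le 2$.

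The second and hardest step is the sharp geometric inequality: every $\pi_1(\Sigma)$-invariant metric $g$ with $\lambda_{0}(\tilde\Sigma,g)\le 2$ satisfies $\Area(\mathbf{D}_\Sigma,g)\ge\frac{\pi}{4}|\chi(\Sigma)|$, with equality only for constant curvature $-8$. As $\lambda_{0}\cdot\Area$ is scale invariant, this is the sharp lower bound $\lambda_{0}(\tilde\Sigma,g)\,\Area(\mathbf{D}_\Sigma,g)\ge\frac{\pi}{2}|\chi(\Sigma)|$, saturated by the hyperbolic metric (for which $\lambda_{0}=\tfrac14$ and $\Area=2\pi|\chi|$). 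I would approach it by uniformization, writing $g=e^{2u}g_{\mathrm{hyp}}$ and using the two-dimensional conformal invariance of the Dirichlet integral to rewrite $\lambda_{0}(\tilde\Sigma,g)=\inf_{\psi}\bigl(\int|\nabla\psi|_{\mathrm{hyp}}^{2}\bigr)/\bigl(\int\psi^{2}e^{2u}\bigr)$; a comparison against the positive $\tfrac14$-eigenfunction of $\Delta_{\mathrm{hyp}}$ then converts the spectral bound into the area bound. A non-sharp version of this estimate is the intrinsic rigidity input of \cite{Song25}; obtaining the sharp constant together with the rigidity statement---that equality forces $u\equiv\mathrm{const}$, i.e. $g$ of constant curvature $-8$---is the substantial improvement, and I expect this to be the principal technical obstacle.

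With the inequality and its rigidity in hand, the equivalences follow. For (iii)$\Rightarrow$(i) I compute the area of the orbit directly: the induced metric on $\mathscr{O}_{s,v}$ is $\PSL(2,\r)$-invariant, hence a multiple $c\,g_{\mathrm{hyp}}$ of the hyperbolic metric, and matching the minimal-surface eigenmap equation $\Delta_{g}f=2f$ with the Laplace eigenvalue $\tfrac14-s^{2}$ of the principal-series matrix coefficients gives $c=\tfrac18-\tfrac{s^{2}}{2}$; at $s=0$ this is $c=\tfrac18$, so the curvature is $-8$ and $\Area=c\cdot 2\pi|\chi|=\frac{\pi}{4}|\chi|$. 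For (i)$\Rightarrow$(iii): equality forces $g$ to have constant curvature $-8$ by the rigidity above, and since $f$ then realises the infimum it is area-minimising, hence a minimal immersion; invoking the classification of constant-curvature minimal surfaces in Hilbert spheres established earlier in the paper, $f(\tilde\Sigma)$ must be an orbit $\mathscr{O}_{s,v}$, and constant curvature $-8$ together with the computation above singles out $s=0$, so $\rho$ is $\rho_0$-Fuchsian and $f(\tilde\Sigma)=\mathscr{O}_{0,v}$.

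Finally, (i)$\Leftrightarrow$(ii). Since every subrepresentation of $\rho$ is again weakly contained in $\lambda_{\pi_1(\Sigma)}$, the Rayleigh-quotient argument of the first step applies to any $\hat f\in\mathscr{H}_{\Sigma,\rho}$, so $\Area\ge\frac{\pi}{4}|\chi|$ holds on all of $\mathscr{H}_{\Sigma,\rho}$; thus equality makes $f$ area-minimising, which is (i)$\Rightarrow$(ii). For (ii)$\Rightarrow$(i) I would show that the infimum of area over $\mathscr{H}_{\Sigma,\rho}$ equals $\frac{\pi}{4}|\chi|$ for every tempered $\rho$: because $\rho$ is weakly equivalent to the $\rho_0$-Fuchsian representation, one can approximate the matrix coefficients of the orbit map and, via the associated Gelfand--Naimark--Segal construction, produce maps in $\mathscr{H}_{\Sigma,\rho}$ with area tending to $\frac{\pi}{4}|\chi|$. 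Hence any area-minimiser attains $\frac{\pi}{4}|\chi|$, giving (i); in particular an area-minimiser can exist only when $\rho$ is $\rho_0$-Fuchsian. Besides the sharp geometric inequality, the delicate ingredient is the spectral identity $\lambda_{0}(E_{\rho},g)=\lambda_{0}(\tilde\Sigma,g)$ under weak equivalence, which is what makes temperedness---equivalently, the exclusion of the complementary series, whose orbits would have strictly smaller area---the decisive hypothesis.
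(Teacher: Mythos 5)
Your route diverges from the paper's at the crucial step, and the step where it diverges contains a fatal error. You reduce the area bound to the purely Riemannian statement that every $\pi_1(\Sigma)$-invariant metric $g$ on $\tilde\Sigma$ with $\lambda_0(\tilde\Sigma,g)\le 2$ has $\Area(\mathbf{D}_\Sigma,g)\ge \frac{\pi}{4}|\chi(\Sigma)|$, equivalently (by scaling) that $\lambda_0(\tilde\Sigma,g)\cdot\Area(\mathbf{D}_\Sigma,g)\ge\frac{\pi}{2}|\chi(\Sigma)|$ for all $g$. This inequality is false. Take a metric on $\Sigma$ containing an isometrically embedded, incompressible flat cylinder $[0,L]\times S^1_\eps$ with the rest of the surface of bounded area: the core curve has infinite order in $\pi_1(\Sigma)$, so the cylinder unrolls in $\tilde\Sigma$ to a flat strip $[0,L]\times\mathbb{R}$, and the test function $\sin(\pi t/L)\chi_R(x)$ with $R\to\infty$ gives $\lambda_0(\tilde\Sigma,g)\le \pi^2/L^2$, while $\Area(\mathbf{D}_\Sigma,g)\approx \eps L+O(1)$ stays bounded; hence $\lambda_0\cdot\Area\to 0$ as $L\to\infty$. (This is consistent with the hyperbolic case, where $\lambda_0\equiv\frac14$ no matter how pinched the surface is, precisely because the conformal factor on the thin part is not flat.) So the scalar quantity $\lambda_0(\tilde\Sigma,g)$ retains too little information: passing from ``there is a $\rho$-equivariant isometric immersion into $\mathbb{S}_H$ with $\rho\sim\lambda_{\pi_1(\Sigma)}$'' to ``the position section has Rayleigh quotient $2$'' discards exactly what makes the theorem true. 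Since your equality/rigidity analysis in (i)$\Rightarrow$(iii) also rests on the equality case of this false inequality, that implication does not survive either.

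The paper instead keeps the vector-valued, equivariant structure throughout: the inequality and its rigidity are imported from \cite[Corollary 2.4]{Song25} (Theorem \ref{thm: intrinsic energy rigidity theorem} here), which bounds the \emph{energy} of any $\rho$-equivariant map to $\mathbb{S}_H$ with respect to the hyperbolic metric in a chosen conformal class and characterizes equality by $f^*g_H=\frac18 g_0$; one then chooses the conformal class of $f^*g_H$ so that energy equals area. Your remaining steps are broadly aligned with the paper: (iii)$\Rightarrow$(i) is the same orbit computation (Theorem \ref{thm: special orbit is minimal}); (i)$\Rightarrow$(iii) via the classification of constant negative curvature minimal surfaces is the paper's argument, granted the rigidity input; and your sketch of (ii)$\Rightarrow$(i) by approximating the $\rho_0$-Fuchsian orbit map inside $\mathscr{H}_{\Sigma,\rho}$ is in the spirit of Proposition \ref{Proposition: weak equivalence implies same spherical area}, though the paper makes this precise by comparing energies through the boundary representation and the weak-containment energy comparison of \cite{Song25} rather than a GNS construction. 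To repair your proof you would need to replace the scalar spectral inequality by the genuine equivariant energy rigidity statement.
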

\vspace{1em}

Our second main theorem, which is crucial for establishing Theorem \ref{thm:area rigidity} and whose proof forms the technical bulk of the paper, is to classify minimal surfaces in spheres with constant negative Gaussian (namely sectional) curvature as orbits $\mathscr{O}_{s,v}$ defined in (\ref{orbit}).
\begin{theorem}[Classification for $K<0$]\label{thm:classification}
A minimal surface immersed in a Hilbert sphere has constant negative Gaussian curvature $K<0$ if and only if, after a rotation of the sphere, it is equal to $\mathscr{O}_{s,v}$ for some $s\in i\r\cup (-\frac{1}{2},\frac{1}{2})$ such that $K=-\frac{8}{1-4s^2}$, and some $\PSO(2)$-invariant unit vector $v$.
\end{theorem}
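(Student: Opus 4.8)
The plan is to focus on the ``only if'' direction, since the ``if'' direction — that each orbit $\mathscr{O}_{s,v}$ is a minimal surface of constant negative curvature — is the direct computation carried out in Section~\ref{sec: minimality of special orbits} and reflects the homogeneity of the orbit. So suppose $f\colon \Sigma\to\s_H$ is a minimal immersion inducing a metric of constant curvature $K<0$. Working in a local isothermal coordinate $z$, conformality reads $\langle f_z,f_z\rangle=0$ and minimality (harmonicity into the sphere) reads $f_{z\bar z}=-\langle f_z,f_{\bar z}\rangle f$. Setting $h:=\langle f_z,f_{\bar z}\rangle>0$, the induced metric is $2h\,|dz|^2$ and the Gauss equation expresses $K$ through $\partial_z\partial_{\bar z}\log h$. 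I would then build the \emph{harmonic sequence} adapted to $f$: starting from $f_0:=f$ and $\phi_0:=f_z$, define inductively the higher osculating directions $\phi_1,\phi_2,\dots$ by applying $\partial_z$ and projecting orthogonally onto the complement of the previous osculating space. Because $H$ is infinite-dimensional this sequence need not terminate — and indeed, as in the classical theory of Calabi, Kenmotsu and Bryant, constant \emph{negative} curvature forces it to be infinite, which is precisely why the target must be a Hilbert sphere.

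The heart of the argument is to translate the constant-curvature hypothesis into rigidity for this infinite sequence. Writing $u_k:=|\phi_k|^2$, the structure (Gauss--Codazzi--Ricci) equations for the osculating frame give an infinite Toda-type system $\partial_z\partial_{\bar z}\log u_k = u_{k+1}-2u_k+u_{k-1}$ (with the convention $u_{-1}=0$), whose lowest equation, combined with $K=\mathrm{const}$, is a Liouville-type equation for $h=u_0$. After normalizing $z$ so that $2h\,|dz|^2$ is the standard constant-curvature-$K$ model metric, I would show that the only admissible solutions are the \emph{homogeneous} ones, in which each $u_k$ equals a positive constant $c_k$ times the universal conformal factor; equivalently, the normalized osculating curvatures are constant. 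This homogeneity is the statement that $f$ ``looks the same at every point,'' and it is the step I expect to be the main obstacle: unlike the finite-dimensional setting, where termination of the sequence drives the classification, here one must prove global rigidity of an infinite Toda system, presumably via a maximum-principle or convexity argument, or by an explicit integration reducing the coupled system to a single recursion for the $c_k$.

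Granting homogeneity, the constants $c_k$ satisfy an explicit second-order recursion with one free parameter. I would solve this recursion and recognize it as the recursion for the matrix coefficients of the weight-raising operator acting on the $\PSO(2)$-weight spaces of a class-one irreducible unitary representation of $\PSL(2,\r)$. The $\PSO(2)$-weights $0,\pm 2,\pm 4,\dots$ correspond exactly to $f_0$ and the conjugate pairs $\phi_{\pm k}$, with the weight-$0$ vector $v:=f_0$ being $\PSO(2)$-invariant. The free parameter is the Casimir eigenvalue, and the requirement that all $c_k$ remain strictly positive (so that the $\phi_k$ are genuine nonzero frame vectors, reflecting unitarity) restricts it to exactly the principal series $s\in i\r$ and the complementary series $s\in(-\tfrac12,\tfrac12)$; parameters outside this range would force some $u_k\le 0$, which is impossible. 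Matching $c_0$ (equivalently the Casimir value) to the normalization of the metric then yields the curvature formula $K=-\tfrac{8}{1-4s^2}$.

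Finally, I would reconstruct the immersion globally. By homogeneity the full system of fundamental forms is invariant under the $\PSL(2,\r)$-action on the model $\mathbb{H}^2$, so a Bonnet-type fundamental theorem of submanifolds — valid for Hilbert-sphere targets — lets every intrinsic isometry extend to an ambient isometry of $\s_H$. This produces a unitary representation $\tau=\rho_s$ of $\PSL(2,\r)$ under which $f$ is equivariant, exhibiting $f(\tilde\Sigma)$, after a rotation of the sphere carrying $f(o)$ to $v$, as the orbit $\mathscr{O}_{s,v}$. Real-analyticity of minimal immersions allows me to pass from this local picture to the complete surface and conclude.
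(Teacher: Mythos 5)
Your overall architecture is the right one, and it is genuinely close in spirit to the paper's proof (which follows Bryant's moving-frame version of the same osculating-sequence idea): an infinite sequence of higher differentials, constant normalized coefficients satisfying a one-parameter recursion, identification of that recursion with the weight-raising operators of a class-one representation of $\PSL(2,\r)$, positivity singling out the principal and complementary series, and the curvature formula $K=-\tfrac{8}{1-4s^2}$. But there are two genuine gaps, and you have located the difficulty in the wrong place. First, the infinite Toda system $\partial_z\partial_{\bar z}\log u_k=u_{k+1}-2u_k+u_{k-1}$ is \emph{asserted}, not derived. For a harmonic sequence built by Hermitian-orthogonal projection, the clean two-term recurrence $\partial_{\bar z}\phi_{k+1}=-\tfrac{u_{k+1}}{u_k}\phi_k$ (and hence the Toda form of the structure equations) requires the vanishing of the \emph{complex-bilinear} (unconjugated) pairings between differentials of different orders; this is not automatic and is exactly where the constant-curvature hypothesis must be used. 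In the paper this is the new content of Proposition \ref{prop: differential of R_m} and Corollary \ref{Remark: vanishing of the constants R_m in the non-flat case} (the quantities $R_m=\langle X^{k+m}u,Y^ku\rangle/A_k$ are shown to be parallel and then to vanish whenever $K\neq 0$), which is what makes Lemma \ref{Lemma: orthogonal basis for the osculating space at each point} work. Conversely, the step you flag as the ``main obstacle'' --- homogeneity of the Toda solution --- is in fact immediate once the Toda system is granted: constant curvature gives $\partial_z\partial_{\bar z}\log u_0=c\,u_0$, so $u_1$ is a constant multiple of $u_0$, hence $\log u_1=\log u_0+\mathrm{const}$, and induction propagates this to all $u_k$; no maximum principle is needed. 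So the proposal proves the easy step and leaves the hard one as an assumption.

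Second, the closing appeal to ``a Bonnet-type fundamental theorem of submanifolds valid for Hilbert-sphere targets'' conceals the entire functional-analytic core of the infinite-dimensional problem. The osculating flag is infinite, the weight-raising/lowering operators have matrix coefficients growing linearly in $k$ and are therefore \emph{unbounded}; to integrate the resulting $\mathfrak{sl}(2,\r)$-action to a unitary representation one must prove the generators are essentially skew-adjoint (the paper does this via Carleman's test for Jacobi matrices, Lemma \ref{Lemma: Carleman's test}), lift to the universal cover, and then check the central subgroup acts trivially so the representation descends to $\PSL(2,\r)$. One also needs the osculating space to be the \emph{same} closed subspace at every point, which in infinite dimensions requires proving real-analyticity of $u$ with quantitative Cauchy estimates for Hilbert-space-valued solutions of $\Delta u=-2u$ and a unique-continuation argument (Proposition \ref{Proposition: the osculating space of a minimal isometric immersion is trivial}); your one-sentence invocation of analyticity does not supply this. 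Without these ingredients the passage from the local recursion for the $c_k$ to the global statement ``$f(\tilde\Sigma)=\mathscr{O}_{s,v}$ after a rotation'' does not close.
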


\vspace{1em}
\noindent
\textbf{Remarks on Theorem \ref{thm:area rigidity}.}

Theorem \ref{thm:area rigidity} resolves the second half of \cite[Question 9]{Son23b}. An analogous area rigidity result in higher dimensions is conjectured to be true too \cite[Question 8]{Son23b}.
 
As a consequence of Theorem \ref{thm:area rigidity}, $\rho_0$-Fuchsian representations are the only unitary representations $\rho$ of $\pi_1(\Sigma)$ weakly equivalent to $\lambda_{\pi_1(\Sigma)}$ for which $\mathscr{H}_{\Sigma,\rho}$ contains a full equivariantly area-minimizing immersion. 
In particular, neither the regular representation nor any  of the other $\rho_s$-Fuchsian representations admit equivariantly area-minimizing surfaces. 

In \cite{Song25}, it was shown that for a large number of models of random equivariant harmonic maps into spheres, as the dimension of the sphere goes to infinity, the intrinsic geometry of the random harmonic maps converges with high probability to a unique rescaled hyperbolic plane. As a consequence of Theorem \ref{thm:area rigidity}, the \emph{extrinsic} geometry of those harmonic maps also converge to a unique limit with high probability. The limit is of the form $\mathscr{O}_{0, v}$ as in the statement.

All the results generalize to punctured surfaces $\Sigma$, see Remark \ref{punctured}. The assumption that the map is a full immersion is here to simplify the statement and is not restrictive.

\vspace{1em}
\noindent
\textbf{Remarks on Theorem \ref{thm:classification}.}

The study of constant Gaussian curvature minimal surfaces in Euclidean spheres is classical. The seminal paper of Calabi \cite{Calabi} on constant curvature minimal spheres in Euclidean spheres provides a full classification: such minimal spheres are exactly given by immersions whose coordinates are a suitable basis of normalized spherical harmonics corresponding to a given eigenvalue
Kenmotsu took up the classification of flat minimal surfaces in spheres in \cite{Kenmotsu}. In both the round and flat cases, the constant curvature minimal surfaces turn out to be orbits of certain unitary representations of the isometry group of the round 2-sphere or the flat 2-plane. Bryant then showed in \cite{Bryant} that there is no minimal surfaces in Euclidean spheres with constant negative Gaussian curvature, and also strengthened \cite{Calabi,Kenmotsu}. The methods of those three papers, while related, are different. Our proof of Theorem \ref{thm:classification} is mostly based on the strategy of Bryant \cite{Bryant}, although we also combine ideas from the earlier papers (see Section \ref{sec: comments on the proofs}).

\vspace{1em}
\noindent
\textbf{Further questions.}

Theorems \ref{thm:area rigidity} and \ref{thm:classification} suggests several directions for further investigation. The $\mathrm{PSL}(2,\mathbb{R})$-orbits $\mathscr{O}_{s,v}$ appearing in Theorem \ref{thm:classification} are minimal surfaces in Hilbert spheres which are both canonical and explicit. It is thus surprising that not much is known about their variational properties. As of now, the following is known:  $\mathscr{O}_{0,v}$ is area-minimizing with respect to compact variations or equivariant variations for $\rho_0$-Fuchsian representations by \cite{BCG95}; 
for $s\in i\mathbb{R}\smallsetminus \{0\}$, $\mathscr{O}_{s,v}$ is not equivariantly area-minimizing with respect to $\rho_0$-Fuchsian representations of a surface groups by \cite{Song25} or Theorem \ref{thm:area rigidity}. \emph{Which orbits $\mathscr{O}_{s,v}$ are area-minimizing with respect to compact variations or equivariant variations for $\rho_s$-Fuchsian representations?} 
It seems plausible that all $\mathscr{O}_{s,v}$ are area-minimizing when $s\in (-\frac{1}{2},\frac{1}{2})$. Which orbits are stable?

In a sense, Theorems \ref{thm:area rigidity} and \ref{thm:classification} confirm in a special case a heuristic appearing in \cite{Song25} that in some situations, being area-minimizing becomes a more constraining condition as the ambient dimension gets large, so that equivariantly area-minimizing surfaces in infinite dimensions should be special. For example, equivariant area-minimizing surfaces in spheres with respect to ``self-similar'' representations are shown to have unique intrinsic geometry by \cite[Theorem 2.1]{Song25}. \emph{Do these also have unique extrinsic geometry}, as in the case of Theorem \ref{thm:area rigidity}? 

Theorem \ref{thm:area rigidity} identifies a very restricted family of representations, $\rho_0$-Fuchsian representations, as the best representations of a surface group which are weakly equivalent to the regular representation. To what extent does this picture generalize to representations far from the regular representation? In other words, \emph{what can be said about the space of unitary representations of surfaces groups, which admit an equivariant area-minimizing map into the Hilbert sphere?} There are related conjectures (rigidity and existence) in higher dimensions, see \cite[Question 8]{Son23b} and \cite[Conjecture 1.2]{Song24}


%

\subsection{Comments on the proofs and relation to prior work}\label{sec: comments on the proofs}

To prove Theorem \ref{thm:area rigidity}, we rely on \cite{Song25}.
Since the topological lower bound \eqref{Equation: fundamental inequality} is independent of the chosen representation $\rho$, (i) easily implies (ii). Nevertheless, the converse is non-trivial and depends on Proposition \ref{Proposition: weak equivalence implies same spherical area}, which states that all unitary representations of a given surface group, which are weakly equivalent to the regular reprentation, have the same ``spherical area''. (iii) implies (i) by an explicit calculation. The most interesting implication is from (i) to (iii).
For this last implication, we use \cite[Corollary 2.4]{Song25} where it was shown that if a minimal immersion $\psi\in \mathscr{H}_{\Sigma,\rho}$ is equivariant with respect to a representation $\rho$ of a surface group which is weakly equivalent to the regular representation, and if this map is energy-minimizing with respect to a given conformal structure on $\Sigma$, then the pullback metric on $\Sigma$ by $\psi$ has constant Gaussian (i.e. sectional) curvature $-8$. By the classification theorem, Theorem \ref{thm:classification}, the minimal surface $\psi(\tilde{\Sigma})$ is necessarily an orbit of the form $\mathscr{O}_{0,v}$, which have Gaussian curvature $-8$ (up to an ambient isometry of the Hilbert sphere).


\vspace{1em}

\noindent
Next, we aim to outline here the main ideas and the structure of the proof of Theorem \ref{thm:classification}. 
%
\begin{enumerate}[(1)]
    \item First, we check that if $v$ is a $\PSO(2)$-invariant vector, then the orbit $\mathscr{O}_{s,v}$ as in (\ref{orbit}) is indeed a minimal surface immersed in the Hilbert sphere $\mathbb{S}_H$, of constant negative curvature $K=-\frac{8}{1-4s^2}$. This is done in Theorem \ref{thm: special orbit is minimal}. This proves the ``if'' part of the statement. We move on to the ``only if'' part.
    \item Let $u:\mathbb{H}^2\to\mathbb{S}_H$ be a minimal isometric immersion.
    As Bryant did in  \cite{Bryant}, we lift $u$ to the frame bundle $\pi:\mathscr{F}\cong\operatorname{PSL}(2,\mathbb{R})\to\mathbb{H}^2$ of the hyperbolic plane $\mathbb{H}^2$ by letting $u_0:=\pi^*u\in C^{\infty}(\mathscr{F},\mathbb{S}_H)$ and we study the algebra of the holomorphic and anti-holomorphic differentials of $u_0$ (see Section \ref{subsec: differentials}).\footnote{As it is usual in moving frames theory, we will often avoid to distinguish between $u$ and $u_0$.} 
    A new point in our proof is to show that the pairing of an anti-holomorphic differential of $u$ with a holomorphic differential of $u$ of higher order \emph{always vanishes} in the constant negative curvature case (Corollary \ref{Remark: vanishing of the constants R_m in the non-flat case}). 
    This should be compared with the proof of  \cite[Theorem 1.6]{Bryant} for minimal isometric immersions of spheres.
    
    %
    \item Second, as in \cite{Calabi,Kenmotsu}, we define the osculating space $\mathbf{O}(q,u)\subset H\otimes_{\r}\c$ of $u$ at some point $q\in\mathscr{F}$ to be the closure of the span of the holomorphic and anti-holomorphic differentials of $u_0$ of any order at $q$. 
  The next step is to prove that $\mathbf{O}(q,u)$ is independent on $q\in\mathscr{F}$ (Proposition \ref{Proposition: the osculating space of a minimal isometric immersion is trivial}). In the finite dimensional case, this is essentially an easy consequence of the Leibniz formula (see \cite[Proof of Theorem 1.6, page 264]{Bryant}), whilst in infinite dimensions we need to show that $u$ is analytic and leverage on a unique continuation principle. 
    Thus, we prove that
    $\mathbf{O}(q,u)(u)= \mathbf{T}(u)$ for some fixed closed subspace $\mathbf{T}(u)\subset H\otimes_{\r}\c$ and, by construction, we have $u(\mathbb{H}^2)=u_0(\mathscr{F})\subset \mathbf{T}(u)$.
    %
    \item In analogy with \cite{Bryant}, we define linear operators $\{F_1,F_2,F_3\}$ on $\mathbf{T}(u)$. We show in our case that they are essentially skew-adjoint and that they generate a Lie algebra isomorphic to $\mathfrak{sl}(2,\mathbb{R})$. To establish these facts requires nontrivial work in infinite dimension, because the operators are unbounded. 
    Next, the operators $\{F_1,F_2,F_3\}$ induce a Lie algebra representation $\varphi$ of $\mathfrak{sl}(2,\mathbb{R})$ on $\mathbf{T}(u)$ by essentially skew-adjoint operators. 
    We then check that $\varphi$ can be lifted to a unitary irreducible representation $\rho$ of $\operatorname{PSL}(2,\mathbb{R})$ on $\mathbf{T}(u)$. Lastly, we prove that $u(\mathbb{H}^2)$ must be an orbit of $\rho$. The statement then follows from Theorem \ref{thm: irreducible representations} and Theorem \ref{thm: special orbit is minimal}.
\end{enumerate}

\vspace{1em}
\subsection*{Acknowledgements}
We are grateful to Robert Bryant for answering our questions about his work and to Christine Breiner for discussions.

X. L. would like to thank his advisor Xin Zhou for his constant support. Part of this work was done when X. L. visited Caltech and he is grateful for their hospitality. X. L. was partially supported by NSF grants DMS-2104254 and DMS-1945178. 

A. S. would like to thank Ursula Hamenst\"{a}dt for discussions. A. S. was partially supported by NSF grant DMS-2104254. 

R. C. would like to thank Renato Ghini Bettiol and Yannick Sire for discussions. R. C. was partially supported by an AMS-Simons Travel Grant.

\section{Area rigidity of \texorpdfstring{$\rho_0$}{Z}-Fuchsian representations}\label{sec: area rigidity} 
In this section, we prove Theorem \ref{thm:area rigidity} assuming the classification result, Theorem \ref{thm:classification}, which will be proved in the next sections.

\subsection{Preliminaries on weak equivalence and the regular representation}\label{subsec: weak equivalence} 

Throughout the paper, all Hilbert spaces of concern will be separable, with finite or infinite dimensions, and complex unless otherwise noted. Let $\Gamma$ be a finitely generated group. We say that $(\pi, H)$ is a \textit{unitary representation} of $\Gamma$ if $H$ is a Hilbert space, and $\pi$ is a group morphism from $\Gamma$ to the unitary group $\operatorname{U}(H) \subset \End(H)$ of $H$.
Our convention will be that $\Gamma$ is called a surface group if it is the fundamental group of a closed oriented  surface with possibly finitely many punctures removed.


Two unitary representations are strongly equivalent if they are equal after conjugation by a linear isometry of the two Hilbert spaces \cite[Definition A.1.3]{BdLHV08}. 
This notion of equivalence is far too rigid for representations of noncompact groups like surfaces groups. The ``right'' notion of equivalence is that of weak equivalence, which is central in this paper.
 \begin{definition} \label{weak equiv}
 Let $(\pi,H)$ and $(\rho,K)$ be two unitary representations of $\Gamma$. We say that $\pi$ is weakly contained in $\rho$ if for every $\xi \in H$, every finite subset $Q$ of $\Gamma$, and every $\varepsilon>0$, there exist $\eta_1,...,\eta_m$ in $K$ such that for all $g\in Q$,
 \begin{align*}
     \bigg\lvert\langle \pi(g)\xi,\xi \rangle - \sum_{j=1}^m \langle \rho(g)\eta_j,\eta_j\rangle\bigg\rvert <\varepsilon.
 \end{align*}
 We write $\pi\prec\rho$ if the above holds. 
 If we have both  $\pi\prec\rho$ and $\rho\prec\pi$, then we say that $\pi$ and $\rho$ are weakly equivalent, and we write $\pi\sim\rho$.
 \end{definition}
Recall that the regular representation $\lambda_\Gamma$ of the group $\Gamma$ is the standard $\Gamma$-action on $\ell^2(\Gamma)$: more precisely,
for any $g,x\in \Gamma$ and $F\in \ell^2(\Gamma)$, 
$$(\lambda_\Gamma(g)\cdot F)(x) :=F(g^{-1}x).$$
As for finite groups, this is the most important representation of $\Gamma$. Unlike finite groups however, $\lambda_\Gamma$ is not completely understood in general.

From now on, assume that $\Gamma$ is a surface group.
If $\Gamma$ is non-abelian, then it is known \cite{Harpe85} that $\Gamma$ is $C^*$-simple: in particular, any unitary representation of $\Gamma$ weakly contained in $\lambda_\Gamma$ is actually weakly equivalent to $\lambda_\Gamma$. See the survey \cite{Harpe07} for more details on $C^*$-simplicity. 

There is a wide variety of unitary representations weakly equivalent to $\lambda_\Gamma$. One concrete but rich family, called \textit{boundary representations}, can be described as follows. Consider $\partial \Gamma$ the Gromov boundary of $\Gamma$ and let $\mu$ be any Patterson--Sullivan measure on $\partial \Gamma$ induced by a metric $d$ on $\Gamma$. Then $\Gamma$ acts on the measure space $(\partial \Gamma, \mu)$ in a way that preserves the measure class of $\mu$. By a standard recipe, this provides a unitary representation of $\Gamma$ on the space of $L^2$-functions:
$$\pi_{d,\mu}: \Gamma\to \End(L^2(\partial\Gamma,\mu)).$$
Any such representation is weakly equivalent to the regular representation $\lambda_\Gamma$ \cite[Theorem 5.1]{Adams94}. Moreover, by \cite[Theorem 7.4]{Lukasz16}, two such representations $\pi_{d,\mu}, \pi_{d',\mu'}$ are strongly equivalent 
if and only if the metrics $d,d'$ are roughly similar, which is a very restrictive condition.

Another more general family of examples is given by \textit{quasi-regular representations} \cite[Definition A.6.1]{BdLHV08}.
If $\Gamma$ acts on any quasi-regular measure space $(X,\mu)$ in a way that preserves the measure class of $\mu$, then we obtain a corresponding unitary representation
$$\pi_\mu: \Gamma\to \End(L^2(X,\mu)).$$
In \cite{Kuhn94}, the following criterion is proved: if the action of $\Gamma$ is ergodic and ``amenable'', then $\pi_\mu$ is weakly equivalent to $\lambda_\Gamma$.  

\subsection{Area and energy of unitary representations of surface groups} 
Recall that for a smooth map $u$ from a closed oriented Riemannian surface $(\Sigma, g_{\Sigma})$ of genus $g \geq 2$ to a smooth Hilbert manifold $(M, g_M)$, the energy of $u$ restricted to a subset $D \subset \Sigma$ is defined as  
\begin{align*}
    \E(u|_{D}) = \frac{1}{2} \int_{D} |du(x)|^2 dv_{g_{\Sigma}}(x).  
\end{align*}

Given a separable Hilbert space $H$, let $\mathbb{S}_H$ be its unit sphere, endowed with the Riemannian metric $g_H$ induced by the inner product of $H$.
For a unitary representation $\rho: \pi_1(\Sigma) \rightarrow \End(H)$,  $\rho$ induces an isometric action of $\pi_1(\Sigma)$ on the unit sphere $\mathbb{S}_H$ of $H$. As usual, we consider the action of  $\pi_1(\Sigma)$ by deck transformations on the universal cover $\tilde\Sigma$ of $\Sigma$. Consider the space of maps 
\begin{align*}
    \mathscr{H}_{\Sigma,\rho} := \{&\pi_1(\Sigma)\text{-equivariant smooth maps $f: \tilde{\Sigma} \rightarrow \mathbb{S}_{H}$ with respect to } \rho\}.  
\end{align*} 
%

Let $\mathbf{D}_{\Sigma}$ be a Borel fundamental domain in $\tilde{\Sigma}$ with piecewise smooth boundary. 
Denote by $\mu_0$ a conformal structure on $\Sigma$, namely a point in the Teichm\"{u}ller space $\mathscr{T}_{\Sigma}$ of $\Sigma$, and let $g_0$ be the unique hyperbolic metric on $\Sigma$ that represents $\mu_0$. As in \cite{Song25}, we define the spherical area and energy of a unitary representation of $\pi_1(\Sigma)$ with respect to $\rho$ as follows.  
\begin{definition}\label{defae}
The  energy of $(\Sigma, \mu_0, \rho)$ is defined as 
\begin{align*}
    \E(\Sigma, \mu_0, \rho) := \inf \{\E(u|_{(\mathbf{D}_{\Sigma}, g_0)}) \mbox{ : } u \in \mathscr{H}_{\Sigma,\rho}\}. 
\end{align*}
The area of $(\Sigma, \rho)$ is defined as 
\begin{align*}
    \Area(\Sigma, \rho) := \inf \{\E(\Sigma, \mu_0, \rho) \mbox{ : } \mu_0 \in \mathscr{T}_{\Sigma}\}. 
\end{align*}
\end{definition}

\begin{remark}\label{rem1}
\phantom{.}
\begin{itemize}
\item 
Recall the basic fact that the energy of a map is conformally invariant, in the sense that it does not depend on the metric in the conformal class of $g_0$.

\item It is well-known that the following holds for any map $V:\mathbf{D}_\Sigma\to \mathbb{S}_H$ by Cauchy-Schwarz: 
    $$\Area(\mathbf{D}_\Sigma, V^*g_H) \leq \E(V|_{(\mathbf{D}_{\Sigma},g_\Sigma)})$$
    and equality holds when $V$ is weakly conformal.

    \item Conversely, since $\Sigma$ is a closed surface, for any map $V:\mathbf{D}_\Sigma\to \mathbb{S}_H$ and any $\varepsilon>0$, there is a metric $g_\Sigma$ on $\Sigma$ such that
    $$\Area(\mathbf{D}_\Sigma, V^*g_H) \geq \E(V|_{(\mathbf{D}_{\Sigma},g_\Sigma)}) -\varepsilon.$$
    This is standard, see for instance the proof of \cite[Lemma 4.4 in Chapter 4]{CM11}.
    \item Hence,    the spherical area of $(\Sigma, \rho)$ satisfies 
    \begin{align*}
          \Area(\Sigma, \rho) = \inf\{\Area(\mathbf{D}_{\Sigma}, u^{*}g_H) \mbox{ : } u \in \mathscr{H}_{\Sigma,\rho}\} < \infty. 
    \end{align*}
    \end{itemize}
\end{remark}

%
%

The proof of Theorem \ref{thm:area rigidity} is crucially based on the following intrinsic energy rigidity statement, proved in 
\cite[Corollary 2.4]{Song25}.

\begin{theorem}\label{thm: intrinsic energy rigidity theorem} 
Let $g_0$ denote a hyperbolic metric on $\Sigma$ and also its lift to $\tilde{\Sigma}$.
Let $\rho: \pi_1(\Sigma) \rightarrow \End(H)$ be a unitary representation of $\pi_1(\Sigma)$ which is weakly equivalent to $\lambda_{\pi_1(\Sigma)}$. Consider a smooth $\pi_1(\Sigma)$-equivariant map with respect to $\rho$
\begin{align*}
    f:\tilde{\Sigma}\to \mathbb{S}_{H}.
\end{align*}
Then
\begin{align*}
    \E(f|_{(\mathbf{D}_{\Sigma}, g_0)}) \geq \Area(\Sigma, \lambda_{\pi_1(\Sigma)}) = \frac{\pi}{4}|\chi(\Sigma)|
\end{align*}
and equality holds if and only if 
$$f^{*} g_{H} = \frac{1}{8} g_0.$$
%
\end{theorem}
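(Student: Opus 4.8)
The plan is to rewrite the energy of $f$ as a quadratic form of the $\rho$-twisted Laplacian and to use that weak equivalence to $\lambda_{\pi_1(\Sigma)}$ pins down the bottom of its spectrum. Since $f$ lands in the unit sphere, $\langle f,f\rangle\equiv 1$, so differentiating twice yields the pointwise identity $|df|^2=-\Real\langle\Delta_{g_0}f,f\rangle$. Because $\rho$-equivariance turns $f$ into a section of the flat Hilbert bundle $\tilde\Sigma\times_\rho H$ over the closed surface $\Sigma$, integration by parts over the fundamental domain has no boundary terms and
\[
\E(f|_{(\mathbf{D}_\Sigma,g_0)})=\tfrac12\int_{\mathbf{D}_\Sigma}|df|^2\,dv_{g_0}=\tfrac12\,\big\langle -\Delta_\rho f,\,f\big\rangle_{L^2(\mathbf{D}_\Sigma)},
\]
where $\Delta_\rho$ denotes the Laplacian on $\rho$-equivariant $L^2$-sections; moreover $\|f\|^2_{L^2(\mathbf{D}_\Sigma)}=\int_{\mathbf{D}_\Sigma}1\,dv_{g_0}=\Area(\Sigma,g_0)=2\pi|\chi(\Sigma)|$, using $|f|\equiv 1$ and Gauss--Bonnet. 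I would then invoke the operator bound $-\Delta_\rho\ge\tfrac14$ (discussed below), which immediately gives $\E(f|_{(\mathbf{D}_\Sigma,g_0)})\ge\tfrac18\cdot 2\pi|\chi(\Sigma)|=\tfrac{\pi}{4}|\chi(\Sigma)|$. The bare inequality also follows more softly by combining the Cauchy--Schwarz bound $\E\ge\Area(\mathbf{D}_\Sigma,f^*g_H)$ of Remark~\ref{rem1} with $\Area(\mathbf{D}_\Sigma,f^*g_H)\ge\Area(\Sigma,\rho)=\Area(\Sigma,\lambda_{\pi_1(\Sigma)})=\tfrac{\pi}{4}|\chi(\Sigma)|$, the middle equality being Proposition~\ref{Proposition: weak equivalence implies same spherical area}.

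For the equivalence, the implication $f^*g_H=\tfrac18 g_0\Rightarrow$ equality is a one-line computation: then $|df|^2=\tr_{g_0}(f^*g_H)=\tfrac14$, so $\E=\tfrac18\Area(\Sigma,g_0)=\tfrac{\pi}{4}|\chi(\Sigma)|$. The content is the converse, and I would extract it from the \emph{two} saturated chains. Assume $\E(f|_{(\mathbf{D}_\Sigma,g_0)})=\tfrac{\pi}{4}|\chi(\Sigma)|$. First, equality in $-\Delta_\rho\ge\tfrac14$ forces, by nonnegativity of $-\Delta_\rho-\tfrac14$, the eigenmap relation $-\Delta_\rho f=\tfrac14 f$; feeding this into $|df|^2=-\Real\langle\Delta_{g_0}f,f\rangle$ together with $|f|\equiv 1$ gives the constant energy density $|df|^2\equiv\tfrac14$. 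Second, the chain $\tfrac{\pi}{4}|\chi(\Sigma)|=\E(f)\ge\Area(\mathbf{D}_\Sigma,f^*g_H)\ge\tfrac{\pi}{4}|\chi(\Sigma)|$ forces equality in the Cauchy--Schwarz step of Remark~\ref{rem1}, i.e. $f$ is weakly conformal, $f^*g_H=\mu\,g_0$ for some $\mu\ge 0$. Since $2\mu=\tr_{g_0}(f^*g_H)=|df|^2\equiv\tfrac14$, we conclude $\mu\equiv\tfrac18$, that is $f^*g_H=\tfrac18 g_0$, as desired.

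The crux, and the step I expect to be the main obstacle, is the spectral bound $-\Delta_\rho\ge\tfrac14$ together with the legitimacy of its equality case, since this is precisely where weak equivalence is used. I would establish it by relating the bottom of the spectrum to the growth of the twisted heat semigroup, $\lambda_0(-\Delta_\rho)=-\lim_{t\to\infty}\tfrac1t\log\|e^{t\Delta_\rho}\|$, and expressing $e^{t\Delta_\rho}$ through the heat kernel of $(\tilde\Sigma,g_0)=\mathbb{H}^2$ weighted by the matrix coefficients $\langle\rho(\gamma)\cdot,\cdot\rangle$. Weak containment $\rho\prec\lambda_{\pi_1(\Sigma)}$ then dominates these operator norms by those of the regular representation, giving $\lambda_0(-\Delta_\rho)\ge\lambda_0(-\Delta_{\lambda_{\pi_1(\Sigma)}})$; and for $\lambda_{\pi_1(\Sigma)}$ the equivariant $\ell^2(\pi_1(\Sigma))$-valued sections reduce to ordinary functions on $\mathbb{H}^2$, whose Laplacian has bottom of spectrum $\lambda_0(\mathbb{H}^2)=\tfrac14$. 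Making this rigorous — the passage from weak containment of unitary representations to the ordering of the bottoms of the twisted spectra, and the careful treatment of the continuous spectrum and of the domain of $\Delta_\rho$, so that the equality relation $-\Delta_\rho f=\tfrac14 f$ is valid for the merely smooth, finite-energy $f$ at hand — is the delicate technical core, and is exactly the input I would borrow from \cite{Song25}.
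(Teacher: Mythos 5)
The paper itself does not prove this statement: Theorem~\ref{thm: intrinsic energy rigidity theorem} is imported wholesale as \cite[Corollary 2.4]{Song25}, so there is no internal proof to compare against. Judged on its own terms, your reconstruction is structurally sound and its two halves fit together correctly: the pointwise identity $|df|^2=-\Real\langle\Delta_{g_0}f,f\rangle$ from $|f|\equiv 1$, the identification $\|f\|^2_{L^2(\mathbf{D}_\Sigma)}=2\pi|\chi(\Sigma)|$ via Gauss--Bonnet, the deduction of the eigenmap equation $-\Delta_\rho f=\tfrac14 f$ from saturation of a nonnegative form (legitimate here since $f$ descends to a smooth section over the \emph{closed} surface, so it lies in the operator domain), and the clean combination of the two saturated chains --- constant energy density $\tfrac14$ from the spectral chain, weak conformality from the Cauchy--Schwarz chain --- to conclude $f^*g_H=\tfrac18 g_0$. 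Your use of Proposition~\ref{Proposition: weak equivalence implies same spherical area} for the conformality chain introduces no circularity, since that proposition rests on other inputs from \cite{Song25}.

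The one genuine soft spot is exactly the one you flag: the spectral bound $-\Delta_\rho\ge\tfrac14$ and its monotonicity under weak containment. The standard consequence of $\rho\prec\lambda_{\pi_1(\Sigma)}$ is the norm domination $\|\rho(a)\|\le\|\lambda_{\pi_1(\Sigma)}(a)\|$ for $a\in\ell^1(\pi_1(\Sigma))$; upgrading this to control the twisted heat semigroup, whose kernel $\sum_{\gamma}p_t(x,\gamma y)\,\rho(\gamma)$ is an integral operator with operator-valued coefficients rather than a group-algebra element, requires a further discretization or positivity argument and careful handling of the continuous spectrum. Since this is precisely the content of the cited \cite[Corollary 2.4]{Song25}, your proposal does not supply a self-contained proof of the crux, but it is an honest and correct account of how the theorem is obtained and of where the weak-equivalence hypothesis enters.
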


The next proposition states that unitary representations weakly equivalent to the regular representation all have the same spherical area (whether this holds for other weak equivalence classes is still open):
\begin{proposition}\label{Proposition: weak equivalence implies same spherical area}
    Let $\rho:\pi_1(\Sigma)\to \End(H)$ be a unitary representation weakly equivalent to $\lambda_{\pi_1(\Sigma)}$. Then 
    \begin{align*}
        \Area(\Sigma,\rho)=\Area(\Sigma,\lambda_{\pi_1(\Sigma)})=\frac{\pi}{4}\lvert\chi(\Sigma)\rvert.
    \end{align*}
\end{proposition}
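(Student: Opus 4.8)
The plan is to prove the two inequalities $\Area(\Sigma,\rho)\ge\frac{\pi}{4}|\chi(\Sigma)|$ and $\Area(\Sigma,\rho)\le\frac{\pi}{4}|\chi(\Sigma)|$ separately, the first being immediate and the second being the heart of the matter. For the lower bound I would simply invoke Theorem \ref{thm: intrinsic energy rigidity theorem}: since $\rho\sim\lambda_{\pi_1(\Sigma)}$, that theorem applies to every $f\in\mathscr{H}_{\Sigma,\rho}$ and gives $\E(f|_{(\mathbf{D}_{\Sigma},g_0)})\ge\frac{\pi}{4}|\chi(\Sigma)|$ for every hyperbolic metric $g_0$. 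Taking the infimum over $f$ and over conformal structures, and using Remark \ref{rem1}, yields $\Area(\Sigma,\rho)\ge\frac{\pi}{4}|\chi(\Sigma)|=\Area(\Sigma,\lambda_{\pi_1(\Sigma)})$.

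The upper bound is where weak containment is used, and I would extract it from a monotonicity principle: if $\pi\prec\rho$ then $\Area(\Sigma,\rho)\le\Area(\Sigma,\pi)$; applying this to $\pi=\lambda_{\pi_1(\Sigma)}\prec\rho$ (which holds since $\rho\sim\lambda_{\pi_1(\Sigma)}$) gives $\Area(\Sigma,\rho)\le\Area(\Sigma,\lambda_{\pi_1(\Sigma)})=\frac{\pi}{4}|\chi(\Sigma)|$ and finishes the proof. To establish the monotonicity, I would fix $\eps>0$, a conformal structure, and a near-optimal $\pi$-equivariant map $u:\tilde{\Sigma}\to\mathbb{S}_{H_\pi}$ with $\E(u|_{(\mathbf{D}_{\Sigma},g_0)})\le\Area(\Sigma,\pi)+\eps$. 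The key observation is that the energy of an equivariant map depends only on its equivariant Gram kernel $K(x,y)=\langle u(x),u(y)\rangle$: writing $|u(x)-u(y)|^2=2-2\Real\langle u(x),u(y)\rangle$, a finite-difference (Korevaar--Schoen type) approximation of $\E(u|_{\mathbf{D}_{\Sigma}})$ on a fine mesh of $\mathbf{D}_{\Sigma}$ compatible with the boundary identifications expresses it, up to an error tending to $0$, as a finite positive combination of terms $2-2\Real\langle u(x_i),\pi(g)u(x_j)\rangle$, where $x_1,\dots,x_N$ are orbit representatives of the mesh vertices and $g$ ranges over a fixed finite set $F\subset\pi_1(\Sigma)$ (the identity for interior edges, products of generators near the boundary). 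Thus the per-fundamental-domain energy is a functional of the finitely many matrix coefficients $\langle\pi(g)u(x_j),u(x_i)\rangle$, $g\in F$.

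Next I would invoke the definition of $\pi\prec\rho$ (Definition \ref{weak equiv}) in its Gram-matrix form to replace these by matrix coefficients of $\rho$: there are vectors $\tilde{u}_1,\dots,\tilde{u}_N$ in $H_\rho$ (a priori in a finite ampliation $H_\rho^{\oplus m}$, since the definition produces sums $\sum_j\langle\rho(g)\eta_j,\eta_j\rangle$ of coefficients) with $\langle\rho(g)\tilde{u}_j,\tilde{u}_i\rangle$ as close as desired to $\langle\pi(g)u(x_j),u(x_i)\rangle$ for all $i,j$ and $g\in F$; in particular $\|\tilde{u}_i\|\approx 1$, so after normalization the $\tilde{u}_i$ lie on $\mathbb{S}_{H_\rho}$. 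Extending $\{\tilde{u}_i\}$ equivariantly and interpolating produces a $\rho$-equivariant competitor whose energy is the same positive combination of the transferred coefficients, hence within $O(\eps)$ of $\E(u|_{\mathbf{D}_{\Sigma}})$; mollifying restores smoothness at negligible energy cost. Letting $\eps\to 0$ would give $\Area(\Sigma,\rho)\le\Area(\Sigma,\pi)$.

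The hard part will be the multiplicity in the last step: the definition of weak containment forces the transferred vectors into a direct sum $H_\rho^{\oplus m}$, so a priori one only bounds $\Area(\Sigma,\rho^{\oplus m})$, and one must compress $m$ copies back into a single copy of $\rho$ while keeping the map into the unit sphere and controlling the energy. For $\rho=\lambda_{\pi_1(\Sigma)}$ I would do this explicitly by spreading the (truncated, finitely supported) components to $F$-separated regions of $\pi_1(\Sigma)$ via the right regular representation, which commutes with $\lambda_{\pi_1(\Sigma)}$; the pieces then become pointwise orthogonal, their squared norms add to $1$, and the energy is preserved. For general $\rho\sim\lambda_{\pi_1(\Sigma)}$ one must instead produce the analogous approximately $F$-orthogonal family inside $H_\rho$ out of weak containment, and I expect this compression, together with checking that the mesh approximation of the energy is uniform in the infinite-dimensional target, to be the most delicate point; the lower bound from Theorem \ref{thm: intrinsic energy rigidity theorem}, which pins every $\Area(\Sigma,\rho^{\oplus m})$ from below by $\frac{\pi}{4}|\chi(\Sigma)|$, is what ultimately forces all these values to coincide.
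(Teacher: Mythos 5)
Your lower bound is sound and is essentially what the paper does (the paper re-derives it from ingredients of \cite{Song25}, using the infinite direct sum $\bigoplus^{\infty}\lambda_{\pi_1(\Sigma)}$ to absorb the sums appearing in Definition \ref{weak equiv}, but invoking Theorem \ref{thm: intrinsic energy rigidity theorem} directly as you do is equally valid). Your upper bound has the right architecture --- a monotonicity principle ``$\pi\prec\rho$ implies $\Area(\Sigma,\rho)\le\Area(\Sigma,\pi)$'' applied to a representation weakly contained in $\rho$ --- and this is exactly what the paper uses via \cite[Corollary 3.4(2)]{Song25}. But the gap you flag at the end is real, and your proposal does not close it. Weak containment only lets you approximate the Gram data $\langle\pi(g)u(x_j),u(x_i)\rangle$ by matrix coefficients of a finite ampliation $\rho^{\oplus m}$, so the transfer argument only yields $\Area(\Sigma,\rho^{\oplus m})\le\Area(\Sigma,\pi)$; your explicit compression works only for $\rho=\lambda_{\pi_1(\Sigma)}$ (where the commuting right regular representation is available), and for a general $\rho\sim\lambda_{\pi_1(\Sigma)}$ you leave it open. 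The closing remark that the lower bound on $\Area(\Sigma,\rho^{\oplus m})$ ``forces all these values to coincide'' is a non sequitur: it gives $\Area(\Sigma,\rho^{\oplus m})=\frac{\pi}{4}|\chi(\Sigma)|$, but the proposition requires an upper bound on $\Area(\Sigma,\rho)$ itself, i.e.\ a genuinely $\rho$-equivariant competitor into $\mathbb{S}_H$, and nothing in your argument produces one.

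The paper's way around this is to apply the monotonicity not to $\lambda_{\pi_1(\Sigma)}$ but to the boundary representation $\underline{\rho}_B$ of \cite{Song25}, which is \emph{irreducible}, weakly equivalent to $\lambda_{\pi_1(\Sigma)}$, and carries an explicit equivariant map $\mathscr{P}$ whose energy equals $\Area(\Sigma,\lambda_{\pi_1(\Sigma)})$, so that $\E(\Sigma,[g_0],\underline{\rho}_B)\le\Area(\Sigma,\lambda_{\pi_1(\Sigma)})$. For an irreducible $\pi$ with $\pi\prec\rho$, diagonal matrix coefficients of $\pi$ are approximated by \emph{single} diagonal matrix coefficients of $\rho$ (no ampliation is needed; this is the standard refinement of weak containment for irreducible representations), so the transfer lands directly in $\mathbb{S}_H$ and the chain $\Area(\Sigma,\rho)\le\E(\Sigma,[g_0],\rho)\le\E(\Sigma,[g_0],\underline{\rho}_B)\le\Area(\Sigma,\lambda_{\pi_1(\Sigma)})$ closes; this is what \cite[Corollary 3.4(2)]{Song25} packages, irreducibility being the operative hypothesis. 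To salvage your route you would either need to substitute an irreducible representation weakly equivalent to $\lambda_{\pi_1(\Sigma)}$ for $\lambda_{\pi_1(\Sigma)}$ itself, or prove the compression $\Area(\Sigma,\rho)\le\Area(\Sigma,\rho^{\oplus m})$ for arbitrary $\rho$ in this weak equivalence class, which your sketch does not do. The discretization of the energy into finitely many matrix coefficients also needs uniform control in the infinite-dimensional target, as you note, but that is a secondary issue compared with the compression.
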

\begin{proof}
Let $[g_0]$ denote the conformal class of the hyperbolic metric $g_0$. 

\noindent
Consider the boundary representation $$\underline{\rho}_B : \pi_1(\Sigma) \to L^2(\partial \tilde\Sigma)$$ defined in  \cite[(7)]{Song25}.
By \cite[Lemma 1.13(1)]{Song25}, the boundary representation $\underline{\rho}_B$ is irreducible and weakly equivalent to $\lambda_{\pi_1(\Sigma)}$. In particular since $\rho\sim \lambda_{\pi_1(\Sigma)}$ by assumption, we have \begin{equation}\label{wc}
\underline{\rho}_B\prec\rho
\end{equation}with $\underline{\rho}_B$ irreducible. Besides, \cite[Lemma 1.14(2)]{Song25} describes an explicit map $$\mathscr{P}:\tilde{\Sigma} \to \text{ unit sphere in $L^2(\partial \tilde\Sigma)$}$$
equivariant with respect to $\underline{\rho}_B$, whose energy (with respect to $g_0$) on the fundamental domain $\mathbf{D}_\Sigma$ is  equal to $\Area(\Sigma,\lambda_{\pi_1(\Sigma)})$. By Definition \ref{defae}, we then have
\begin{equation}\label{in en ar}
    \E(\Sigma,[g_0],\underline{\rho}_B) \le \Area(\Sigma,\lambda_{\pi_1(\Sigma)}).
\end{equation}
We claim that
\begin{align*}
    \Area(\Sigma,\rho) & \le \E(\Sigma,[g_0],\rho)\\
    &\le \E(\Sigma,[g_0],\underline{\rho}_B) \\
    &\le \Area(\Sigma,\lambda_{\pi_1(\Sigma)})\\
    &=\frac{\pi}{4}\lvert\chi(\Sigma)\rvert.
\end{align*}
Indeed, the first inequality follows from  Definition \ref{defae}, the second inequality is (\ref{wc}) combined with \cite[Corollary 3.4(2)]{Song25}\footnote{In \cite{Song25}, one considers punctured surfaces and the ``renormalized energy'' $\E_{\mathrm{ren}}$ of maps. In this paper, we only work with closed surfaces, and so $\E_{\mathrm{ren}}$ is simply the usual energy $E$.}, the third inequality is (\ref{in en ar}), and the last line is a theorem of Besson-Courtois-Gallot \cite[Theorem 1.12]{Song25}.

\noindent
It remains to show that $ \Area(\Sigma,\rho)\geq \frac{\pi}{4}\lvert\chi(\Sigma)\rvert$. We claim that 
\begin{align*}
    \frac{\pi}{4}\lvert\chi(\Sigma)\rvert & =\Area(\Sigma,\lambda_{\pi_1(\Sigma)})\\
    &=\E\bigg(\Sigma,[g_0],\bigoplus^{\infty}\lambda_{\pi_1(\Sigma)}\bigg)\\
    &\le\E(\Sigma,[g_0],\rho),
\end{align*}
Indeed,  the first line is \cite[Theorem 1.12]{Song25}, the second line is \cite[Lemma 1.13 (second equality)]{Song25}, and the last inequality comes from our assumption that $\rho\sim\lambda_{\pi_1(\Sigma)}$ combined with \cite[Corollary 3.4(1)]{Song25}. 
Here, $\bigoplus^{\infty}\lambda_{\pi_1(\Sigma)}$ denotes the infinite direct sum of $\lambda_{\pi_1(\Sigma)}$. The general construction is defined in \cite[Definition A.1.6]{BdLHV08}. Now, since the above inequality holds for any conformal class $[g_0]$, by Definition \ref{defae}, we actually get
\begin{align*}
    \frac{\pi}{4}\lvert\chi(\Sigma)\rvert \leq \Area(\Sigma,\rho).
\end{align*}
The desired statement follows.
\end{proof}
\subsection{Proof of Theorem \ref{thm:area rigidity}} 
The main inequality in the statement is a consequence of Theorem \ref{thm: intrinsic energy rigidity theorem}, Definition \ref{defae} and Remark \ref{rem1}.

\noindent
Next, we show separately that $(i)$ is equivalent to both $(ii)$ and $(iii)$.

\smallskip
\noindent
$(i)\Leftrightarrow (ii)$. Notice that, by Definition \ref{defae}, Remark \ref{rem1}  and Proposition \ref{Proposition: weak equivalence implies same spherical area}, we have
\begin{align*}
    \Area(\mathbf{D}_{\Sigma},\hat f^*g_H)\ge\Area(\Sigma,\rho)=\Area(\Sigma,\lambda_{\pi_1(\Sigma)})=\frac{\pi}{4}\lvert\chi(\Sigma)\rvert \qquad\forall\,\hat f\in\mathscr{H}_{\Sigma,\rho}.
\end{align*}
From this inequality, it is easy to check that $f$ satisfies $(i)$ if and only it satisfies $(ii)$.

\medskip
\noindent
$(i)\Leftrightarrow (iii)$. The fact $(iii)$ implies $(i)$ follows by a direct computation done in Theorem \ref{thm: special orbit is minimal}. This theorem says that the 2-dimensional surface $f(\tilde{\Sigma}) = \mathscr{O}_{0,v}$ endowed with the induced metric has Gaussian curvature $-8$. The area of $\mathbf{D}_\Sigma$ with the pullback metric by $f$ is then equal to the area of the rescaled hyperbolic metric  on $\Sigma$ with curvature $-8$, namely $\frac{\pi}{4}|\chi(\Sigma)|$.

\noindent
We are just left to prove that $(i)$ implies $(iii)$. Let then $f \in \mathscr{H}_{\Sigma,\rho}$ be a full immersion such that
\begin{align*}
    \Area(\mathbf{D}_{\Sigma},f^* g_H)=\frac{\pi}{4}|\chi(\Sigma)|.
\end{align*}
Endow $\Sigma$ with the conformal class induced by the pullback metric $f^* g_H$, so that $f$ is now a conformal map. The energy of $f$ restricted to  $\mathbf{D}_\Sigma$  is then equal to $\Area(\mathbf{D}_{\Sigma},f^* g_H)$ (see Remark \ref{rem1}).
Again by Theorem \ref{thm: intrinsic energy rigidity theorem}, we conclude that $f^*g_H=\frac{1}{8}g_0$. Moreover, since $f$ is equivariantly area-minimizing by $(ii)$ (which we already proved follows from $(i)$), we deduce that $$f:\left(\tilde{\Sigma}, \frac{1}{8}g_0\right) \rightarrow \mathbb{S}_{H}$$ is a minimal isometric immersion. Hence, $f(\tilde{\Sigma})$ is a minimal surface of constant negative curvature immersed in a Hilbert sphere. Applying Theorem \ref{thm:classification} and Theorem \ref{thm: special orbit is minimal}, we deduce that, up to rotations, $$f(\tilde{\Sigma}) = \mathscr{O}_{0, v}$$ for some $\PSO(2)$-invariant vector $v$ of norm one.   
Finally, to see that $\rho$ is $\rho_0$-Fuchsian, notice that the choice of the hyperbolic metric $g_0$ in the conformal class of $f^*g_H$ induces a Fuchsian embedding $\pi_1(\Sigma) \xrightarrow{\iota} \PSL(2, \mathbb{R})$.  As $f$ is a full map, we must have $\rho= \rho_0\circ\iota$, thereby proving that $\rho$ is $\rho_0$-Fuchsian.

\begin{remark}\label{punctured}
All the arguments of this section hold more generally for punctured surfaces, namely closed oriented surfaces with finitely many points removed, which is the setting adopted in \cite{Song25}. In this more general situation, the statement of the area rigidity theorem, Theorem \ref{thm:area rigidity}, should be modified as follows: the immersion $f\in \mathscr{H}_{\Sigma,\rho}$ should additionally be assumed to have \emph{finite} Dirichlet energy.
\end{remark}




\section{Structure of minimal surfaces of constant curvature}\label{sec: differentials and representations} 
\subsection{The algebra of holomorphic and anti-holomorphic differentials}\label{subsec: differentials}
\noindent
For the sake of completeness and to ease the reading, we recall in this section the setup introduced by Bryant in  \cite[Section 1]{Bryant}. Let $(M,g)$ be one of the complete simply connected Riemannian  surface with constant curvature, namely:
\begin{enumerate}
    \item the round 2-sphere $\mathbb{S}_{r}^2$ of radius $r$ with its standard round metric;
    \item or the Euclidean space $\mathbb{R}^2$ with its standard Euclidean metric;
    \item or the hyperbolic plane $\mathbb{H}^2$ with a metric of constant negative Gaussian curvature.
\end{enumerate}
We denote by $K$ the constant Gaussian curvature of the metric $g$. 

Let 
$$\pi:\mathscr{F}\to M$$
be the frame bundle of the oriented orthonormal frames over $(M,g)$, with respect to the metric $g$. A point $q\in\mathscr{F}$ is a triple $q=(x,e_1,e_2)$ where $x\in M$ and $(e_1,e_2)$ is oriented orthonormal frame of $T_xM$. Notice that this is a principal $O$-bundle over $M$, with $O=\operatorname{SO}(2)$ if $K>0$, $O=\operatorname{SO}(2)$ if $K=0$ and $O=\PSO(2)$ if $K<0$.\footnote{\,We remark explicitly that $\mathfrak{o}=\mathfrak{so}(2)$ in all cases, where $\mathfrak{o}$ denotes the Lie algebra of $O$.} 
In particular, $\mathscr{F}$ is a smooth manifold which is diffeomorphic to the group $G$ of orientation preserving isometries of $M$, i.e.
\begin{enumerate}
\item $G=\operatorname{SO}(3)$ if $K>0$, 
\item $G=\mathbb{R}^2\rtimes\operatorname{SO}(2)$ if $K=0$, 
\item $G=\operatorname{PSL}(2,\mathbb{R})$ if $K<0$.
\end{enumerate}

\smallskip
\noindent
The so-called \textit{canonical 1-forms} on $\mathscr{F}$ are the unique 1-forms $\omega^1,\omega^2\in\Omega^1(\mathscr{F})$ satisfying 
\begin{align*}
    d\pi_q=\omega_{q}^1e_1+\omega_{q}^2e_2 \qquad\forall\,q=(x,e_1,e_2)\in\mathscr{F}.
\end{align*}
Let $\nabla$ be the Levi--Civita connection on $(M,g)$. Then, $\nabla$ induces a unique 1-form $\rho\in\Omega^1(\mathscr{F})$ satisfying
\begin{align}\label{Equation: 9}
\begin{cases}
    d\omega^1=-\rho\wedge\omega^2\\
    d\omega^2=\rho\wedge\omega^1,
\end{cases}
\end{align}
called the \textit{connection 1-form} associated with $\nabla$
.
Notice that $\{\omega^1,\omega^2,\rho\}$ is a coframing of $\mathscr{F}$. Hence, we have\
\begin{align*}
    d\rho=f\,\omega^1\wedge\omega^2+g\,\omega^1\wedge\rho+h\,\omega^2\wedge\rho,
\end{align*}
for some function $f,g,h:\mathscr{F}\to\r$. By differentiating \eqref{Equation: 9} though, we have
\begin{align*}
    d\rho\wedge\omega^1=d\rho\wedge\omega^2=0,
\end{align*}
which implies $g=h\equiv 0$. Hence, we conclude that 
\begin{align*}
    d\rho=f\,\omega^1\wedge\omega^2.
\end{align*}
Notice that $f:\mathscr{F}\to\r$ is a well-defined function on $M$, because given any two points $q_1=(x,e_1,e_2),q_2=(x,e_1',e_2')\in\mathscr{F}$ we have $f(q_1)=f(q_2)$. This is precisely the Gaussian curvature of $(M,g)$ and, hence, it is constantly equal to $K$. We conclude that
\begin{align}\label{Equation: 10}
    d\rho=K\,\omega^1\wedge\omega^2.
\end{align}
For our purposes, it is convenient to use a complex coframing of $\mathscr{F}$ instead of the given one. Thus, let 
$$\omega:=\omega^1+i\omega^2$$ and consider the complex coframing of $\mathscr{F}$ given by $\{\omega,\overline{\omega},\rho\}$. Under this notation, \eqref{Equation: 9} and \eqref{Equation: 10} may be rewritten as
\begin{align}\label{Equation: 11}
\begin{cases}
    d\omega=i\,\rho\wedge\omega\vspace{1mm}\\
    \displaystyle{d\rho=\frac{iK}{2}\omega\wedge\overline{\omega}}.
\end{cases}
\end{align}
We let $L\to M$ be the complex line bundle of the 1-forms which are complex multiples of $\omega$ and $L^{-1}\to M$ be the complex line bundle of the 1-forms which are complex multiples of $\overline{\omega}$. More explicitly, we let $L,L^{-1}\subset T^*M\otimes_{\mathbb{R}}\mathbb{C}$ be the sub-bundles of the complexified cotangent bundle $T^*M\otimes_{\mathbb{R}}\mathbb{C}$ of $M$ given by
\begin{align*}
    L&:=\big\{(x,\alpha) \mbox{ : } \alpha_{\pi(q)}\circ d\pi_q=s(q)\omega_q \mbox{ for some } s(q)\in\mathbb{C},\,\forall\,q\in\pi^{-1}(x)\big\}\\
    L^{-1}&:=\big\{(x,\alpha) \mbox{ : } \alpha_{\pi(q)} \circ d\pi_q=s(q)\overline{\omega}_q \mbox{ for some } s(q)\in\mathbb{C},\,\forall\,q\in\pi^{-1}(x)\big\}
\end{align*}
For every $m\ge 0$, let $L^m\to M$ and $L^{-m}\to M$ be the $m$th powers\footnote{\,As complex line bundles, i.e. the tensor product of $L$ with itself $m$ times.} of $L$ of and of $L^{-1}$ respectively. By construction, for every $m\in\z$ and for every smooth section $\sigma\in C^{\infty}(L^m)$ of $L^m$, we may write $\pi^*\sigma=s_{\sigma}\,\omega^m$ for some unique function $s_{\sigma}\in C^{\infty}(\mathscr{F})$. By computing the covariant derivative of $\sigma$ with respect to the connection induced by $\rho$ on the line bundle $L^m$, we get
\begin{align}\label{Equation: 16}
    ds_{\sigma}= - m i s_{\sigma} \rho + s_{\sigma}'\omega+s_{\sigma}''\overline{\omega}
\end{align}
for some unique functions $s_{\sigma}',s_{\sigma}''\in C^{\infty}(\mathscr{F})$. Set 
\begin{align*}
    &\partial_m:C^{\infty}(L^m)\to C^{\infty}(L^{m+1})\\
    &\overline{\partial}_m:C^{\infty}(L^m)\to C^{\infty}(L^{m-1})
\end{align*}
be given by
\begin{align*}
    \pi^*(\partial_m\sigma)&=s_{\sigma}'\omega^{m+1}\\
    \pi^*(\overline{\partial}_m\sigma)&=s_{\sigma}''\omega^{m-1}.
\end{align*}
Define the $\z$-graded vector bundle 
\begin{align*}
    \mathscr{T}:=\bigoplus_{m\in\z}C^{\infty}(L^m)
\end{align*}
and the operators $X,Y,Z$ on $\mathscr{T}$ given by
\begin{align*}
    X:=\bigoplus_{m\in\z}\partial_m \qquad Y:=\bigoplus_{m\in\z}\overline{\partial}_m \qquad Z:=\bigoplus_{m\in\z}m I_m.
\end{align*}
The following proposition and its proof can be found in \cite[Proposition 1.1]{Bryant}.
\begin{proposition}\label{Proposition: commutation of X,Y,Z}
    The following facts hold.
    \begin{enumerate}[(i)]
        \item The operators $X,Y,Z$ satisfy
        \begin{align}\label{Equation: commuting relations X,Y,Z}
            [Z,X]=X, \qquad [Z,Y]=-Y, \qquad [X,Y]=-\frac{K}{2}Z.
        \end{align}
        In particular, they generate a Lie algebra.
        \item The operator $\Delta:=2(XY+YX)$ is the Laplace--Beltrami operator on every graded piece of $\mathscr{T}$.
        \item The operator $\Phi:=\Delta - K Z^2$ commutes with $X$, $Y$ and $Z$. 
    \end{enumerate}
\end{proposition}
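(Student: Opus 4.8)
The plan is to unwind the definitions of $X$, $Y$, $Z$ on the frame bundle $\mathscr{F}$ and exploit the structure equations \eqref{Equation: 11}; part (iii) will then follow from part (i) by a purely Lie-algebraic manipulation. Throughout I fix $\sigma\in C^{\infty}(L^m)$ and write $\pi^*\sigma=s\,\omega^m$, so that \eqref{Equation: 16} reads $ds=-mis\,\rho+s'\omega+s''\overline{\omega}$, and I recall that $X\sigma$ and $Y\sigma$ correspond to the functions $s'$ and $s''$, while $XY\sigma=\partial_{m-1}\overline{\partial}_m\sigma$ and $YX\sigma=\overline{\partial}_{m+1}\partial_m\sigma$ correspond to the $\omega$- and $\overline{\omega}$-coefficients $(s'')'$ and $(s')''$ obtained by reapplying \eqref{Equation: 16} on $L^{m-1}$ and $L^{m+1}$. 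For (i), the first two relations are immediate from the grading: since $\partial_m$ raises and $\overline{\partial}_m$ lowers the degree by one while $Z$ acts by the scalar $m$ on $C^{\infty}(L^m)$, one gets $[Z,X]\sigma=((m+1)-m)\partial_m\sigma=X\sigma$ and likewise $[Z,Y]=-Y$.

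The content of (i) is the third relation. To obtain it I would differentiate \eqref{Equation: 16}: applying $d$ and using $d^2=0$ gives
\begin{align*}
0 = -mi\,ds\wedge\rho - mis\,d\rho + ds'\wedge\omega + s'\,d\omega + ds''\wedge\overline{\omega} + s''\,d\overline{\omega}.
\end{align*}
I then substitute $d\omega=i\rho\wedge\omega$, $d\overline{\omega}=-i\rho\wedge\overline{\omega}$, $d\rho=\frac{iK}{2}\omega\wedge\overline{\omega}$, expand $ds$, $ds'$, $ds''$ via \eqref{Equation: 16} on $L^m$, $L^{m+1}$, $L^{m-1}$ respectively, and read off coefficients in the coframing $\{\omega\wedge\overline{\omega},\,\omega\wedge\rho,\,\overline{\omega}\wedge\rho\}$. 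The $\omega\wedge\rho$ and $\overline{\omega}\wedge\rho$ components cancel identically (the degree-shifted connection terms are arranged precisely so), while the $\omega\wedge\overline{\omega}$ component yields
\begin{align*}
(s'')' - (s')'' = -\tfrac{mK}{2}\,s.
\end{align*}
Since the left side represents $XY\sigma-YX\sigma=[X,Y]\sigma$ and the right side represents $-\frac{K}{2}Z\sigma$, this is exactly $[X,Y]=-\frac{K}{2}Z$.

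For (ii), I would identify $2(XY+YX)$ with the metric trace of the second covariant derivative on $C^{\infty}(L^m)$, which is the Laplace--Beltrami operator on each graded piece. Indeed $\partial_m$ and $\overline{\partial}_m$ are the $(1,0)$- and $(0,1)$-components of the covariant derivative determined by the connection $1$-form $\rho$, so $\partial_{m-1}\overline{\partial}_m\sigma$ and $\overline{\partial}_{m+1}\partial_m\sigma$ recover the two mixed components of the Hessian of $\sigma$; their sum is the $g$-trace, and the factor $2$ reflects the normalization $\omega=\omega^1+i\omega^2$. On functions ($m=0$) this is the classical identity between the complex Laplacian and the Laplace--Beltrami operator, and the general case is the same once the degree-$m$ connection terms in \eqref{Equation: 16} are tracked.

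Finally, (iii) is purely algebraic, using only \eqref{Equation: commuting relations X,Y,Z} together with $\Delta=2(XY+YX)$. A direct bracket computation gives $[XY,Z]=[YX,Z]=0$, hence $[\Delta,Z]=0$, and trivially $[Z^2,Z]=0$, so $[\Phi,Z]=0$. For $X$, using $[Y,X]=\frac{K}{2}Z$ one finds $[XY,X]=\frac{K}{2}XZ$, $[YX,X]=\frac{K}{2}ZX$, and $[Z^2,X]=ZX+XZ$, whence
\begin{align*}
[\Phi,X] = K\,XZ + K\,ZX - K\,(ZX+XZ) = 0,
\end{align*}
and the computation for $Y$ is symmetric. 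Thus $\Phi$ is the quadratic Casimir of the Lie algebra generated by $X,Y,Z$ and commutes with all three. The main obstacle is the third relation in (i): it is the only step requiring the structure equations, and care is needed to track every connection term when differentiating \eqref{Equation: 16} and to verify that the two $\rho$-components genuinely cancel.
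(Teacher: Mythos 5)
Your proof is correct, and it is essentially the argument the paper relies on: the paper does not reprove this proposition but cites \cite[Proposition 1.1]{Bryant}, whose proof is exactly this moving-frames computation — the grading argument for $[Z,X]$ and $[Z,Y]$, the identity $d^2 s_\sigma=0$ combined with the structure equations \eqref{Equation: 11} to extract $[X,Y]=-\frac{K}{2}Z$ from the $\omega\wedge\overline{\omega}$ component, and the purely algebraic Casimir computation for $\Phi$. All of your individual steps check out (in particular the cancellation of the $\omega\wedge\rho$ and $\overline{\omega}\wedge\rho$ components and the signs in $[\Phi,X]$), so nothing further is needed.
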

We continue to follow the conventions of \cite{Bryant}.
Given any separable real Hilbert space $H$, let $H_\mathbb{C}:=H\otimes_{\mathbb{R}}\mathbb{C}$ be its complexification.
We set $\mathscr{H}:=H\otimes_{\mathbb{R}}\mathscr{T}$ and we extend the operators $X,Y,Z$ to $\mathscr{H}$ by linearity. We also have a complex bilinear pairing  $\langle\cdot,\cdot\rangle:\mathscr{H}\times\mathscr{H}\to\mathscr{T}$ which is induced by the real inner product on $H$ and the natural pairing on $\mathscr{T}$. An element $\sigma\in C^{\infty}(\mathscr{H})$ is in the $m$-th graded piece if $\pi^*\sigma=s\,\omega^m$ for some well-defined function $s\in C^{\infty}(\mathscr{F},H_{\mathbb{C}})$. 
The conjugation operation on $C^{\infty}(\mathscr{F},H_{\mathbb{C}})$ is defined the usual way. 
The conjugation operation on $\mathscr{H}$ is then defined by the formula $\pi^*\overline{\sigma}=\overline{s}\,\omega^{-m}$ for $\pi^*\sigma=s\,\omega^m$. We remark explicitly that
\begin{align}\label{Equation: switch formulas}
    X\overline{\sigma}=\overline{Y\sigma}, \qquad Y\overline{\sigma}=\overline{X\sigma}, \qquad Z\overline{\sigma}=-\overline{Z\sigma}.
\end{align}
\noindent
From now on, we define the real constants $\{A_k\}_{k\in\mathbb{N}}$ by
\begin{align}\label{Equation: 15}
    \begin{cases}
        A_0=1\\
        A_{k+1}=\displaystyle{\frac{1}{2}\bigg(1-\binom{k+1}{2}K\bigg)A_k}  \qquad\qquad\forall\,k\in\mathbb{N}
    \end{cases}
\end{align}
and the real constants $\{c_p\}_{p\in \mathbb{N}}$ by 
\begin{align}\label{def constants c_p}
        c_{p}=\frac{1}{2}\bigg(1 - \binom{p}{2}K\bigg) = \frac{A_p}{A_{p-1}} \qquad\qquad\forall\,p\in\mathbb{N}.
    \end{align}

\vspace{1em}

In the remainder of this subsection, we will always make the following assumptions:

\vspace{1em}

\textbf{Assumptions:} \emph{Let $H$ be a separable real Hilbert space. Let $u:M\to{\s_H}\subset H$ be a map from the surface $(M,g)$ with constant curvature $K$ into the unit sphere ${\s_H}$ of $H$. Suppose that $u$ is both a minimal immersion and a Riemannian isometric immersion.}

\vspace{1em}

The next two propositions are from \cite[Section 1]{Bryant}.
\begin{proposition} \label{propa}
     For every $k>0$ we have 
    \begin{align*}
        YX^ku&=-\frac{A_k}{A_{k-1}}X^{k-1}u,\\
        XY^ku&=-\frac{A_k}{A_{k-1}}Y^{k-1}u.
    \end{align*}
\end{proposition}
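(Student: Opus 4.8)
The plan is to prove both identities by induction on $k$: the minimal isometric immersion hypothesis will enter only in the base case $k=1$, the commutation relations \eqref{Equation: commuting relations X,Y,Z} will drive the inductive step, and the second identity will follow from the first by the conjugation symmetry \eqref{Equation: switch formulas}.

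For the base case I would first translate the Assumptions into algebraic identities for the grade-$0$, $H$-valued section $u$. Since $u$ takes values in the unit sphere, $\langle u,u\rangle=1$, and differentiating gives $\langle Xu,u\rangle=\langle Yu,u\rangle=0$. Because $g=\omega\overline{\omega}$ and $u$ is an isometric immersion, the first fundamental form $\langle du,du\rangle$ equals $g$; comparing the $\omega^2$, $\omega\overline{\omega}$, and $\overline{\omega}^2$ components yields $\langle Xu,Xu\rangle=\langle Yu,Yu\rangle=0$ and $\langle Xu,Yu\rangle=\tfrac12$. Minimality means $u$ is harmonic into $\s_H$, so the $H$-valued Laplacian $\Delta u=2(XY+YX)u$ is parallel to $u$; since $Zu=0$, the relation $[X,Y]=-\tfrac{K}{2}Z$ gives $XYu=YXu$, whence $YXu=\tfrac14\Delta u$ is itself a multiple of $u$, say $YXu=\lambda u$. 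Applying the Leibniz rule for $Y$ against the pairing to $\langle Xu,u\rangle=0$ gives $\langle YXu,u\rangle=-\langle Xu,Yu\rangle=-\tfrac12$, so $\lambda=-\tfrac12$ and $YXu=-\tfrac12 u=-\tfrac{A_1}{A_0}u$ (recall $A_0=1$, $A_1=\tfrac12$), which is the case $k=1$.

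For the inductive step, assume $YX^ku=-\tfrac{A_k}{A_{k-1}}X^{k-1}u$ and set $v:=X^ku$, which lies in grade $k$ so that $Zv=kv$. The relation $[X,Y]=-\tfrac{K}{2}Z$ gives $YXv=XYv+\tfrac{kK}{2}v$, while applying $X$ to the inductive hypothesis gives $XYv=X(YX^ku)=-\tfrac{A_k}{A_{k-1}}X^ku$. Hence
\[
YX^{k+1}u=\Big(\tfrac{kK}{2}-\tfrac{A_k}{A_{k-1}}\Big)X^ku=\Big(\tfrac{kK}{2}-c_k\Big)X^ku,
\]
and it remains only to check the arithmetic identity $c_{k+1}=c_k-\tfrac{kK}{2}$, which follows from \eqref{def constants c_p} together with $\binom{k+1}{2}-\binom{k}{2}=k$; this yields $YX^{k+1}u=-c_{k+1}X^ku=-\tfrac{A_{k+1}}{A_k}X^ku$ and closes the induction. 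For the second identity I would conjugate the first: since $u$ is real, \eqref{Equation: switch formulas} gives $\overline{X^{k-1}u}=Y^{k-1}u$ and $\overline{YX^ku}=XY^ku$, so conjugating $YX^ku=-\tfrac{A_k}{A_{k-1}}X^{k-1}u$ (the constants being real) produces $XY^ku=-\tfrac{A_k}{A_{k-1}}Y^{k-1}u$.

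I expect the main obstacle to lie in the base case, namely in correctly extracting from ``minimal isometric immersion into the unit sphere'' the two facts $\langle Xu,Yu\rangle=\tfrac12$ and $\Delta u\parallel u$ and in pinning down the proportionality constant; the inductive step is a formal manipulation with \eqref{Equation: commuting relations X,Y,Z}, where the only real care is to keep the signs in the $A_k$-recursion consistent. I would also stress that, although $H$ is infinite-dimensional, every step here is a pointwise computation involving finitely many derivatives of the smooth section $u$, so none of the analytic subtleties (analyticity, unbounded operators) relevant elsewhere in the paper arise at this stage.
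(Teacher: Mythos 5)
Your proof is correct: the base case correctly extracts $\langle Xu,Yu\rangle=\tfrac12$ and $YXu=XYu=\tfrac14\Delta u\parallel u$ from the isometric-immersion and minimality hypotheses (pinning down $\lambda=-\tfrac12$ via $\langle YXu,u\rangle=-\langle Xu,Yu\rangle$ is a clean touch), and the inductive step via $YX=XY+\tfrac{K}{2}Z$ together with the identity $c_{k+1}=c_k-\tfrac{kK}{2}$ checks out, as does deducing the second identity by conjugation. The paper itself gives no proof but defers to Bryant's Proposition~1.3, whose argument is essentially this same induction, so your write-up simply supplies the omitted details.
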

\begin{proof}
   See \cite[Proposition 1.3]{Bryant}.
\end{proof}
\begin{proposition}\label{prop: the values of R_0 and R_1}
    For every $k\ge 0$ we have 
    \begin{align*}
        \langle X^{k}u,Y^ku\rangle&=A_k\\
        \langle X^{k+1}u,Y^ku\rangle&=\langle X^{k}u,Y^{k+1}u\rangle=0.
    \end{align*}
\end{proposition}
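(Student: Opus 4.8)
The plan is to establish all three identities simultaneously by induction on $k$, drawing only on Proposition \ref{propa}, the bilinearity and symmetry of the pairing $\langle\cdot,\cdot\rangle$, and a Leibniz rule for the operators $X$ and $Y$. The first step is to record this Leibniz rule: for $\sigma,\tau\in C^{\infty}(\mathscr{H})$,
\begin{align*}
    X\langle\sigma,\tau\rangle=\langle X\sigma,\tau\rangle+\langle\sigma,X\tau\rangle,\qquad Y\langle\sigma,\tau\rangle=\langle Y\sigma,\tau\rangle+\langle\sigma,Y\tau\rangle.
\end{align*}
This follows by writing $\pi^*\sigma=s_\sigma\,\omega^a$ and $\pi^*\tau=s_\tau\,\omega^b$, differentiating $\langle s_\sigma,s_\tau\rangle$, and comparing the $\omega$- and $\overline\omega$-coefficients with \eqref{Equation: 16}; since the total grade $a+b$ is additive, the $\rho$-terms cancel and the derivative splits cleanly into its $X$- and $Y$-parts. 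Next, using $\overline u=u$ (as $u$ is $H$-valued) together with \eqref{Equation: switch formulas}, an easy induction gives $\overline{X^k u}=Y^k u$, whence the symmetry and conjugation-invariance of the pairing yield $\langle X^k u,Y^{k+1}u\rangle=\overline{\langle X^{k+1}u,Y^k u\rangle}$. This identity lets me discard the third equality, reducing the problem to proving $\langle X^k u,Y^k u\rangle=A_k$ and $\langle X^{k+1}u,Y^k u\rangle=0$.

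The base case $k=0$ is then immediate: $\langle u,u\rangle=|u|^2=1=A_0$ because $u$ maps into $\mathbb{S}_H$, and applying $X$ to the constant $\langle u,u\rangle$ and using symmetry gives $2\langle Xu,u\rangle=0$.

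For the inductive step I would assume both reduced identities at level $k$. Applying $Y$ to $\langle X^{k+1}u,Y^k u\rangle=0$ and substituting $YX^{k+1}u=-\tfrac{A_{k+1}}{A_k}X^k u$ from Proposition \ref{propa} gives
\begin{align*}
    0=-\frac{A_{k+1}}{A_k}\langle X^k u,Y^k u\rangle+\langle X^{k+1}u,Y^{k+1}u\rangle,
\end{align*}
so $\langle X^{k+1}u,Y^{k+1}u\rangle=\tfrac{A_{k+1}}{A_k}\,A_k=A_{k+1}$, the diagonal identity at level $k+1$. Because $\langle X^{k+1}u,Y^{k+1}u\rangle$ is now the constant $A_{k+1}$, applying $X$ to it and substituting $XY^{k+1}u=-\tfrac{A_{k+1}}{A_k}Y^k u$ yields
\begin{align*}
    0=\langle X^{k+2}u,Y^{k+1}u\rangle-\frac{A_{k+1}}{A_k}\langle X^{k+1}u,Y^k u\rangle=\langle X^{k+2}u,Y^{k+1}u\rangle,
\end{align*}
the off-diagonal identity at level $k+1$; the third identity then follows from the conjugation relation above.

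I expect the only real obstacle to be the careful justification of the Leibniz rule for $X$ and $Y$ individually — one has to track that the pairing lands in the graded bundle $\mathscr{T}$ and that the connection ($\rho$-) terms cancel precisely because grade is additive. Everything after that is mechanical: the recursion \eqref{Equation: 15} for $A_k$ is arranged exactly so that the ratio $A_{k+1}/A_k$ supplied by Proposition \ref{propa} propagates the normalization $\langle X^k u,Y^k u\rangle=A_k$, while the vanishing of the off-diagonal pairings is precisely the condition that keeps each $\langle X^k u,Y^k u\rangle$ constant on $M$. Note that this step, unlike later parts of the argument, is purely pointwise and algebraic, so no infinite-dimensional complications arise.
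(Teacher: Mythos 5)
Your proof is correct. The paper itself gives no argument here but simply cites Bryant's Proposition 1.4, and your induction — the Leibniz rule for $X$ and $Y$ on the pairing, the base case $\langle u,u\rangle=1$, and the recursion supplied by Proposition \ref{propa} driving both the normalization $\langle X^ku,Y^ku\rangle=A_k$ and the vanishing of the off-diagonal pairings — is essentially Bryant's standard argument, so nothing further is needed.
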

\begin{proof}
   See \cite[Proposition 1.4]{Bryant}.
\end{proof}

While the previous propositions are quoted directly from \cite{Bryant}, the next proposition and its corollary are a new observation that we will need.
\begin{proposition}\label{prop: differential of R_m}
    For every $k,m\in\mathbb{N}$ with $m\ge 2$ we have 
    \begin{align}\label{eqn: defining relation for R_m}
        \langle X^{k+m}u,Y^ku\rangle=A_kR_m,
    \end{align}
    for $R_m\in C^{\infty}(L^m)$ such that 
    \begin{align}
        XR_m = YR_m = 0. 
    \end{align}
\end{proposition}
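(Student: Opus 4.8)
The plan is to induct on $m$ and to extract everything from the way the first-order operators $X=\partial$ and $Y=\overline\partial$ differentiate the scalar pairings
\[
P_{k,m}:=\langle X^{k+m}u,Y^ku\rangle\in C^\infty(L^m),\qquad k\ge 0,\ m\ge 0 .
\]
Since the complex bilinear pairing is induced by the (parallel) inner product of $H$, both $X$ and $Y$ obey the Leibniz rule on $P_{k,m}$; feeding in Proposition \ref{propa} to rewrite $YX^{k+m}u=-c_{k+m}X^{k+m-1}u$ and $XY^ku=-c_kY^{k-1}u$ yields the two recursions
\[
YP_{k,m}=P_{k+1,m-1}-c_{k+m}P_{k,m-1},\qquad XP_{k,m}=P_{k,m+1}-c_kP_{k-1,m+1}\ \ (k\ge 1).
\]
Proposition \ref{prop: the values of R_0 and R_1} supplies the base of the induction: $P_{k,0}=A_k$ and $P_{k,1}=0$, so $R_0=1$ and $R_1=0$, both of which satisfy $XR_j=YR_j=0$ trivially.

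Assume now that for every $j<m$ one has the factorization $P_{k,j}=A_kR_j$ (with $R_j:=P_{0,j}$) together with $XR_j=YR_j=0$. First I would establish the factorization at level $m$: the second recursion at level $m-1$ reads $XP_{k,m-1}=P_{k,m}-c_kP_{k-1,m}$, and since $XP_{k,m-1}=A_k\,XR_{m-1}=0$ it collapses to $P_{k,m}=c_kP_{k-1,m}$ for $k\ge1$; telescoping with $c_k=A_k/A_{k-1}$ gives $P_{k,m}=A_kP_{0,m}=A_kR_m$. Next I would prove $YR_m=0$: the first recursion at level $m$ together with the inductive factorization at level $m-1$ gives $YP_{k,m}=A_k(c_{k+1}-c_{k+m})R_{m-1}$, where $c_{k+1}-c_{k+m}$ is a fixed multiple of $K$; but the inductive hypothesis $XR_{m-1}=YR_{m-1}=0$ forces, via Proposition \ref{Proposition: commutation of X,Y,Z}, $0=[X,Y]R_{m-1}=-\tfrac{K}{2}ZR_{m-1}=-\tfrac{K(m-1)}{2}R_{m-1}$, i.e. $KR_{m-1}=0$, so the whole expression vanishes. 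Taking $k=0$ gives $YR_m=YP_{0,m}=0$.

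The decisive step — and the one I expect to be the main obstacle — is $XR_m=0$. It cannot be deduced from the structure equations alone: the recursions determine the next tower only up to the free datum $XR_m$ (one checks $P_{k,m+1}=A_kR_{m+1}+kA_k\,XR_m$), so no purely algebraic contradiction is available. My plan is therefore to bring in the global geometry of the complete surface $(M,g)$. From $YR_m=0$ and Proposition \ref{Proposition: commutation of X,Y,Z} one gets $\Delta R_m=KmR_m$, and the product rule for $\Delta=2(XY+YX)$ applied to $\langle R_m,\overline{R_m}\rangle$, using $X\overline{R_m}=\overline{YR_m}=0$, yields the Bochner-type identity
\[
\Delta|R_m|^2=4|XR_m|^2+2Km|R_m|^2 .
\]
Crucially $|R_m|^2\le A_m$ is bounded, because $|X^mu|^2=\langle X^mu,Y^mu\rangle=A_m$ is constant by Proposition \ref{prop: the values of R_0 and R_1} and Cauchy--Schwarz. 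For $K>0$ the surface is compact, so integrating forces $\int|XR_m|^2=\int|R_m|^2=0$; for $K=0$ the bounded function $|R_m|^2$ is subharmonic on the parabolic plane $\mathbb{R}^2$, hence constant, so $|XR_m|^2\equiv0$.

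The case $K<0$ — precisely the one needed for Theorem \ref{thm:classification} — is the genuinely hard part and is where I expect the real difficulty to lie. The sign $2Km<0$ defeats the naive maximum principle (a bounded holomorphic differential on $\mathbb{H}^2$ need not vanish), so boundedness together with $YR_m=0$ is not enough on its own. Resolving it requires exploiting more of the global structure peculiar to $R_m=\langle X^mu,u\rangle$ coming from a complete minimal isometric immersion — for instance an Omori--Yau argument on $\mathbb{H}^2$ using the full identity, the superharmonicity $\Delta\log|R_m|^2=2mK<0$, or the finer information in the whole tower $\{X^jR_m\}_j$ — to force $XR_m\equiv0$. Once $XR_m=0$ is secured the induction closes and $R_m$ is a parallel section of $L^m$; the Corollary then follows at once, since $XR_m=YR_m=0$ and $[X,Y]=-\tfrac{K}{2}Z$ give $0=-\tfrac{Km}{2}R_m$, whence $R_m\equiv0$ whenever $K\neq0$.
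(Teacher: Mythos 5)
Your setup is faithful to the paper's: the two Leibniz recursions, the telescoping that yields $P_{k,m}=A_kR_m$ from $XR_{m-1}=0$, and the base data $R_0=1$, $R_1=0$ all appear in the paper's proof (where the factorization is phrased as the statement that Bryant's polynomial $R_m(k)$ is constant in $k$). Your route to $YR_m=0$ is a legitimate variant: you observe that the inductive hypothesis $XR_{m-1}=YR_{m-1}=0$ already forces $K(m-1)R_{m-1}=0$ via $[X,Y]=-\tfrac{K}{2}Z$, so the factor $(c_{k+1}-c_{k+m})R_{m-1}$, being a multiple of $KR_{m-1}$, vanishes; the paper instead runs a short chain of identities ($XYR_m=Y^2R_m=0$, then $2YXR_m=KmR_m$, then $Km\,YR_m=K(2m-1)\,YR_m$) to reach the same conclusion. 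Up to this point your proposal is correct.

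The genuine gap is exactly where you placed it: $XR_m=0$ in the case $K<0$, which is the only case Theorem \ref{thm:classification} needs, is not proved. Your Bochner identity $\Delta|R_m|^2=4|XR_m|^2+2Km|R_m|^2$ with the bound $|R_m|^2\le A_m$ does settle $K>0$ and $K=0$, but for $K<0$ the zeroth-order term has the wrong sign, and none of the alternatives you sketch (Omori--Yau, superharmonicity of $\log|R_m|^2$, the tower $\{X^jR_m\}_j$) is developed into an argument. The paper does not go global here at all. It uses the extra structural input from \cite[Proposition 2.2]{Bryant} that $\langle X^{k+m}u,Y^ku\rangle=A_kR_m(k)$ with $R_m(k)$ a \emph{polynomial in $k$ of degree at most $m-2$}; since $XR_m=R_{m+1}(k)-R_{m+1}(k-1)$ must be independent of $k$, this forces $R_{m+1}(k)=\alpha k+\beta$ with $\alpha=XR_m$, and the paper then computes $XY\alpha=\tfrac{Km}{2}\alpha$ from $2YXR_m=KmR_m$, computes $YX\alpha$ a second time by reading off the coefficient of $k$ in $YXR_{m+1}(k)$ via the recursions one more level up the tower, and plays the two against the commutation relation $[X,Y]\alpha=-\tfrac{K}{2}Z\alpha=-\tfrac{K(m+1)}{2}\alpha$ to force $Km\alpha=0$. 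So your assertion that ``no purely algebraic contradiction is available'' is precisely where you part company with the paper: the contradiction is extracted locally, by applying the $\mathfrak{sl}(2,\mathbb{R})$-commutation relation to the degree-$(m+1)$ section $\alpha=XR_m$ itself and to the polynomial dependence on $k$ across the whole tower, not by analysis on $\mathbb{H}^2$. You are right that this is the delicate point of the whole proposition (it is flagged in the introduction as the paper's new observation), but without this step, or a working substitute for $K<0$, your induction does not close and Corollary \ref{Remark: vanishing of the constants R_m in the non-flat case} --- which is what the classification actually consumes --- is never reached.
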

\begin{proof}
    By \cite[Proposition 2.2]{Bryant}, for every $k,m\in\mathbb{N}$ with $m\ge 2$ we have 
     \begin{align}
        \langle X^{k+m}u,Y^ku\rangle=A_kR_m(k),
    \end{align}
    for some polynomial $R_m(k)$ in $k$ of degree at most $m-2$ with smooth coefficients in $C^{\infty}(L^m)$. Observe that by Proposition \ref{propa}
    \begin{align}\label{Equation: 18}
    \begin{split}
        XR_m(k)&=A_k^{-1}X(\langle X^{k+m}u,Y^ku\rangle)\\
        &=A_k^{-1}(\langle X^{k+m+1}u,Y^ku\rangle+\langle X^{k+m}u,XY^ku\rangle)\\
        &=A_k^{-1}\bigg(\langle X^{k+(m+1)}u,Y^ku\rangle-\frac{A_{k-1}^{-1}}{A_k^{-1}}\langle X^{k+m}u,Y^{k-1}u\rangle\bigg)\\
        &=A_k^{-1}\langle X^{k+(m+1)}u,Y^ku\rangle-A_{k-1}^{-1}\langle X^{(k-1)+m+1}u,Y^{k-1}u\rangle\\
        &=R_{m+1}(k)-R_{m+1}(k-1)
    \end{split}
    \end{align}
    and
    \begin{align}\label{Equation: 17}
    \begin{split}
        YR_m(k)&=A_k^{-1}Y(\langle X^{k+m}u,Y^ku\rangle)\\
        &=A_k^{-1}(\langle YX^{k+m}u,Y^ku\rangle+\langle X^{k+m}u,Y^{k+1}u\rangle)\\
        &=A_k^{-1}\bigg(-\frac{A_{k+m}}{A_{k+m-1}}\langle X^{k+(m-1)}u,Y^ku\rangle+\langle X^{k+m}u,Y^{k+1}u\rangle\bigg)\\
        &=-c_{k+m}A_k^{-1}\langle X^{k+(m-1)}u,Y^ku\rangle+c_{k+1}A_{k+1}^{-1}\langle X^{(k+1)+(m-1)}u,Y^{k+1}u\rangle\\
        &=c_{k+1}R_{m-1}(k+1)-c_{k+m}R_{m-1}(k),
    \end{split}
    \end{align}
    where the constants $c_p$ are defined in (\ref{def constants c_p}).

    \noindent
    We proceed by induction on $m\ge 2$.

    \smallskip
    \noindent
    \textit{Base of the induction}. We prove the statement for $m=2$. Since $R_2(k)$ has degree $0$, we have
    \begin{align*}
        R_2(k)=R_2(0)=\langle X^2u,u\rangle \qquad\forall k\in\mathbb{N}.
    \end{align*}
    But we have $\langle Xu,u\rangle=0$ (since $u$ takes values into a unit sphere) and $\langle Xu,Xu\rangle=0$ (since $u$ is an isometry), we have
    \begin{align*}
        0=X(\langle Xu,u\rangle)=\langle X^2u,u\rangle+\langle Xu,Xu\rangle=\langle X^2u,u\rangle.
    \end{align*}
    Thus, we get $  R_2(0)=0 $ and since the polynomial $R_2$ has degree at most $0$,
    \begin{align*}
        R_2(k)=0 \qquad\forall\, k\in\mathbb{N}
    \end{align*}
    and we have proved the statement for $m=2$.

    \smallskip
    \noindent
    \textit{Induction step}. Now assume that our statement holds for $m-1$. By \eqref{Equation: 18} and \eqref{Equation: 17}, we immediately get that $R_m(k)\equiv R_m$ is independent on $k$ and that
    \begin{align}\label{xyr}
    \begin{split}
        Y^2R_m&=c_{k+1}YR_{m-1}-c_{k+m}YR_{m-1}=0,\\
        XYR_m&=c_{k+1}XR_{m-1}-c_{k+m}XR_{m-1}=0.
    \end{split} 
    \end{align}
    This implies by Proposition \ref{Proposition: commutation of X,Y,Z} that
    \begin{align*}
        \Delta R_m&=2(XY+YX)R_m=2YXR_m\\
        &=-2(XY-YX)R_m=KZR_m=KmR_m.
    \end{align*}
    This means that
     \begin{align}\label{kmrm}
        2YXR_m = KmR_m.
    \end{align}
    Hence, we obtain using (\ref{kmrm}), Proposition \ref{Proposition: commutation of X,Y,Z} again, and (\ref{xyr}),
    \begin{align*}
        KmYR_m&=2Y^2XR_m=2Y\bigg(XY+\frac{K}{2}Z\bigg)R_m\\
        &=2\bigg(XY+\frac{K}{2}Z\bigg)YR_m +KYZR_m\\
        &=2XY^2R_m + K(m-1)YR_m+mKYR_m\\
        &=K(m-1)YR_m+mKYR_m,
    \end{align*}
    which implies
    \begin{align}
        YR_m=0.
    \end{align}
    when $K \neq 0$. For the case $K = 0$, $c_{k + 1} = c_{k + m} = 1/2$ directly gives $YR_m = 0$.   

    \noindent
    It remains to check that $XR_m = 0$. If we write 
    \begin{align*}
        R_{m + 1}(k) = \sum_{j = 0}^{m - 1} \gamma_{j}(z) k^{j}
    \end{align*}
    as a polynomial of $k$, then the following holds:
    \begin{align*}
        \alpha = XR_{m} = R_{m + 1}(k) - R_{m + 1}(k - 1) = \sum_{j = 0}^{m - 1}\gamma_j(z) k^{j} - \sum_{j = 0}^{m - 1}\gamma_j(z) (k - 1)^{j}. 
    \end{align*}
    This implies that $\gamma_{j} = 0$ for every $j \geq 2$ and 
    \begin{align*}
        R_{m + 1}(k) = \alpha k + \beta,  
    \end{align*}
    where $\alpha$ may depend on $z \in \mathscr{F}$. But we will see in a moment that $\alpha = 0$. We claim that 
    \begin{align*}
        XY\alpha &= \frac{Km \alpha}{2};\\
        YX\alpha &= \frac{K(m + 1)\alpha}{2}. 
    \end{align*}
    Let us compute $XY \alpha$ first, using (\ref{kmrm}): 
    \begin{align*}
        XY\alpha = XYXR_m = X\left(\frac{Km}{2}R_m\right) = \frac{Km\alpha}{2}.  
    \end{align*}
    Next, notice that $YX\alpha$ is the coefficient of $YXR_{m + 1}(k) = YX\alpha k + YX\beta$ in front of $k$. Hence it suffices to compute $YX R_{m + 1}(k)$ using (\ref{Equation: 18}) and (\ref{Equation: 17}): 
    \begin{align*}
        YXR_{m + 1}(k) &= Y[R_{m + 2}(k) - R_{m + 2}(k - 1)]\\
        &= c_{k + 1} R_{m + 1}(k + 1) - c_{m + k + 2}R_{m + 1}(k) - c_{k} R_{m + 1}(k) + c_{m + k + 1} R_{m + 1}(k - 1)\\
        &=(c_{k + 1} - c_{m + k + 2} - c_{k} + c_{m + k + 1})(\alpha k + \beta)\\
        &= \frac{1}{2}(-K) \bigg(\binom{k + 1}{2} - \binom{k}{2} + \binom{m + k + 1}{2} - \binom{m + k + 2}{2}\bigg)(\alpha k + \beta) \\
        &= \frac{K(m+1)}{2}(\alpha k + \beta). 
    \end{align*}
    From this we get the desired formula for $YX\alpha$. As the graded tensor $\alpha = XR_m$ has degree $m + 1$, we see thanks to the computations above that
    \begin{align*}
        -\frac{K(m + 1)}{2}\alpha = -\frac{K}{2}Z\alpha = [X, Y]\alpha = XY\alpha - YX\alpha = \frac{K m \alpha}{2} - \frac{K(m + 1)\alpha}{2} = -\frac{K}{2}\alpha, 
    \end{align*}
    which implies
    \begin{align*}
        XR_m = 0
    \end{align*}
    when $K \neq 0$. For the case $K = 0$, $R_{m + 1}(k) = R_{m + 1}(k - 1) = R_{m + 1}$ directly gives $XR_m = 0$. This completes the inductive step. 
\end{proof}
\begin{corollary}\label{Remark: vanishing of the constants R_m in the non-flat case}
    Suppose that the Gaussian curvature $K$ of $(M,g)$ is a nonzero constant. Then, for every $k\geq0$ and $m\geq 1$, we have 
    $$
        \langle X^{k+m}u,Y^ku\rangle=0.
$$
\end{corollary}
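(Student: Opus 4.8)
The plan is to treat the cases $m = 1$ and $m \ge 2$ separately, reducing everything to what has already been established in Proposition \ref{prop: differential of R_m}. The case $m = 1$ is immediate: it is exactly the second identity in Proposition \ref{prop: the values of R_0 and R_1}, which asserts $\langle X^{k+1}u, Y^k u\rangle = 0$ for all $k \ge 0$, and this holds with no sign assumption on $K$. So I would dispose of $m = 1$ by a direct citation.

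For $m \ge 2$, I would invoke Proposition \ref{prop: differential of R_m}, which writes $\langle X^{k+m}u, Y^k u\rangle = A_k R_m$ where $R_m \in C^{\infty}(L^m)$ is a section satisfying $XR_m = YR_m = 0$. Since the right-hand side is the product of the scalar $A_k$ with $R_m$, it suffices to prove that $R_m = 0$; this is the one place where the hypothesis $K \ne 0$ enters.

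To conclude $R_m = 0$, I would feed the vanishing $XR_m = YR_m = 0$ into the commutation relation $[X,Y] = -\tfrac{K}{2}Z$ of Proposition \ref{Proposition: commutation of X,Y,Z}. Because $R_m$ lies in the $m$-th graded piece $C^{\infty}(L^m)$, the grading operator acts by $ZR_m = mR_m$. Evaluating the bracket on $R_m$ then gives $0 = XYR_m - YXR_m = [X,Y]R_m = -\tfrac{Km}{2}R_m$, where the left-hand side vanishes precisely because both $XR_m$ and $YR_m$ are zero. Since $K \ne 0$ and $m \ge 2 > 0$, the coefficient $\tfrac{Km}{2}$ is nonzero, forcing $R_m = 0$ and hence $\langle X^{k+m}u, Y^k u\rangle = A_k R_m = 0$ for every $k \ge 0$.

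The main obstacle has, in effect, already been overcome inside Proposition \ref{prop: differential of R_m}, where the identities $XR_m = YR_m = 0$ are extracted through a careful double induction on $m$; the corollary itself is then a one-line consequence of the $\mathfrak{sl}(2,\mathbb{R})$-type structure of $\{X,Y,Z\}$, since in nonzero curvature the bracket $[X,Y]$ is a nonvanishing multiple of the grading operator $Z$, so a section annihilated by both $X$ and $Y$ is annihilated by a nonzero multiple of itself. Equivalently, one could argue analytically: writing $\pi^*R_m = s\,\omega^m$, the conditions $XR_m = YR_m = 0$ give $ds = -mis\,\rho$, and differentiating together with $d\rho = \tfrac{iK}{2}\,\omega\wedge\overline{\omega}$ forces $\tfrac{Km}{2}\,s\,\omega\wedge\overline{\omega} = 0$, hence $s \equiv 0$ when $Km \ne 0$.
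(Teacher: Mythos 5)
Your proof is correct. Both you and the paper dispose of $m=1$ by citing Proposition~\ref{prop: the values of R_0 and R_1}, and both reduce the case $m\geq 2$ to showing $R_m=0$ using the identities $XR_m=YR_m=0$ from Proposition~\ref{prop: differential of R_m}; the difference lies in how that last vanishing is extracted. The paper feeds $YR_m=0$ back into the explicit recursion \eqref{Equation: 17}, where the $k$-independence of $R_{m-1}$ turns the right-hand side into $(c_{k+1}-c_{k+m})R_{m-1}$, and then uses that $c_{k+1}\neq c_{k+m}$ when $K\neq 0$ to conclude inductively. You instead evaluate the commutator $[X,Y]=-\tfrac{K}{2}Z$ of Proposition~\ref{Proposition: commutation of X,Y,Z} on $R_m$: since $XR_m=YR_m=0$ the left-hand side vanishes, while $ZR_m=mR_m$ because $R_m$ sits in the $m$-th graded piece, forcing $\tfrac{Km}{2}R_m=0$. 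Your route is slightly more structural and arguably cleaner---it isolates exactly where $K\neq 0$ enters, namely that $[X,Y]$ is a nonzero multiple of the grading operator---at the cost of needing both $XR_m=0$ and $YR_m=0$, whereas the paper's recursion argument only uses $YR_m=0$. Your alternative computation via $ds=-mis\,\rho$ and $d\rho=\tfrac{iK}{2}\,\omega\wedge\overline{\omega}$ is just the same bracket identity written out in the coframing, and is also fine.
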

\begin{proof}
The case $m=1$ follows from Proposition \ref{prop: the values of R_0 and R_1}. For $m\geq 2$, recall from Proposition \ref{prop: differential of R_m} and its proof that we have \eqref{Equation: 17}, where $R_m(k)$ are in fact independent of $k$. The condition $YR_m=0$ given by Proposition \ref{prop: differential of R_m}  together with \eqref{Equation: 17}, and the fact that $c_{k+1}\neq c_{k+m}$ when $K\neq 0$,
readily imply by induction that 
    \begin{align*}
        R_m=0 \qquad\forall\,m\ge 2
    \end{align*}
    if $K$ is nonzero.\footnote{This is consistent with Bryant's work. Indeed, the only case in which it is shown that $R_m\neq 0$ for every even $m$ is the flat case $K=0$ (see \cite[Section 3]{Bryant}).}
    By (\ref{eqn: defining relation for R_m}), we conclude.
    
\end{proof}

\subsection{The osculating space and its properties}\label{subsec: osculating space} 

Recall that the constants $\{A_k\}$ are defined in (\ref{Equation: 15}). Similarly to \cite{Calabi,Kenmotsu}, we consider the osculating space  of the minimal isometric immersion.

\begin{definition}[The osculating space]\label{Definition: osculating space}
    Let $H$ be a separable real Hilbert space. Let $u:M\to{\s_H}\subset H$ be a smooth minimal isometric immersion of $M$ in the unit sphere ${\s_H}$ of $H$. 
    For every $k\in\mathbb{Z}$, let $u_k\in C^{\infty}(\mathscr{F},H_{\mathbb{C}})$ be given by
    \begin{align}\label{Equation: definition of u_k}
        \left\{ \begin{array}{rcl}
\pi^*(X^ku)= \sqrt{A_k}u_k\omega^k & \mbox{if}
& k\geq0 \\ 
\pi^*(Y^{|k|}u)= \sqrt{A_{|k|}}u_k\omega^k & \mbox{if} & k<0
\end{array}\right..   
    \end{align}

    \noindent
    The \textit{osculating space of the map $u$ at the point $q\in\mathscr{F}$} is the closed subspace of $H_{\mathbb{C}}$ given by
    \begin{align*}
        \mathbf{O}(q,u):=\text{ closure of $\operatorname{span}_{\mathbb{C}}\{u_k(q)
        \mbox{ : } k\in\mathbb{Z}\}$ inside $H_{\mathbb{C}}$.}
    \end{align*}
    The \textit{osculating space of the map $u$} is the vector bundle over $\mathscr{F}$ given by $p:\mathbf{O}(u)\to\mathscr{F}$, where 
    \begin{align*}
        &\mathbf{O}(u):=\bigsqcup_{q\in\mathscr{F}}\mathbf{O}(q,u)\subset\mathscr{F}\times H_{\mathbb{C}}\\
        &p((q,v)):=q \qquad\forall\,(q,v)\in\mathbf{O}(u). 
    \end{align*}
\end{definition}
\begin{remark}
Since $u$ takes values in the real Hilbert space $H$, we have $\overline{u}=u$. 
    More generally, notice that, by \eqref{Equation: switch formulas}, we have 
    \begin{align*}
        \sqrt{A_k}u_{-k}\omega^{-k} = \pi^*(Y^ku)=\pi^*(\overline{X^ku})=\sqrt{A_k}\overline{u_k}\,\omega^{-k} \qquad\forall\,k\in\n.
    \end{align*}
    Hence, for every $k\in\n$ 
    $$u_{-k} = \overline{u_k}.$$
    The map $u_k\in C^{\infty}(\mathscr{F},H_{\mathbb{C}})$ can be thought of as the $k$-th holomorphic differential of $u$, whilst $u_{-k}\in C^{\infty}(\mathscr{F},H_{\mathbb{C}})$ can be interpreted as the $k$-th anti-holomorphic differential of $u$.
\end{remark}
\begin{lemma}\label{Lemma: orthogonal basis for the osculating space at each point}
    Under the same assumptions and notations of Definition \ref{Definition: osculating space}, assume that $K<0$. For every $q\in\mathscr{F}$ the set 
    \begin{align*}
        \{u_k(q) \mbox{ : } k\in\z\} 
    \end{align*}
    forms an orthonormal Hilbert basis of the complex Hilbert space $\mathbf{O}(q,u)$.
\end{lemma}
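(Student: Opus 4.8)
The plan is to compute every Hermitian inner product $\langle u_j(q),\overline{u_k(q)}\rangle$ among the candidate basis vectors and show it equals $\delta_{jk}$; since $\mathbf{O}(q,u)$ is by Definition \ref{Definition: osculating space} the closed complex span of the $u_k(q)$, this orthonormality upgrades immediately to the Hilbert basis statement (completeness being automatic). Here $\langle v,\overline{w}\rangle$ denotes the Hermitian inner product of $H_{\mathbb{C}}$, written through the complex-bilinear pairing $\langle\cdot,\cdot\rangle$ and the conjugation of $H_{\mathbb{C}}$. Recalling from the Remark after Definition \ref{Definition: osculating space} that $u_{-k}=\overline{u_k}$, pulling back to $\mathscr{F}$ turns each such inner product into a pairing of holomorphic/anti-holomorphic differentials of $u$: for $j,k\ge 0$ and $a,c\ge 0$ one finds
\begin{align*}
\pi^*\langle X^j u, Y^k u\rangle &= \sqrt{A_j A_k}\,\langle u_j,\overline{u_k}\rangle\,\omega^{\,j-k},\\
\pi^*\langle X^a u, X^c u\rangle &= \sqrt{A_a A_c}\,\langle u_a, u_c\rangle\,\omega^{\,a+c} = \sqrt{A_a A_c}\,\langle u_a,\overline{u_{-c}}\rangle\,\omega^{\,a+c}.
\end{align*}

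First I would dispatch the indices of equal sign. The first display together with Proposition \ref{prop: the values of R_0 and R_1} gives $\langle u_k,\overline{u_k}\rangle=1$ for every $k\ge 0$, so each $u_k(q)$ is a unit vector; Corollary \ref{Remark: vanishing of the constants R_m in the non-flat case} (which applies since $K<0$ is nonzero) gives $\langle X^j u, Y^k u\rangle=0$ whenever $j\neq k$, whence $\langle u_j,\overline{u_k}\rangle=0$ for distinct $j,k\ge 0$, the case $j<k$ following by Hermitian symmetry. Applying conjugation and the switch formulas \eqref{Equation: switch formulas} extends both assertions to all indices of the same sign.

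The crux, and the step I expect to be the genuine obstacle, is the orthogonality of oppositely-signed indices, $\langle u_a,\overline{u_{-c}}\rangle=0$ for $a,c\ge 1$. By the second display this is equivalent to the vanishing of the \emph{pure} pairing $\langle X^a u, X^c u\rangle$ of two holomorphic differentials, an object to which neither Proposition \ref{prop: the values of R_0 and R_1} nor Corollary \ref{Remark: vanishing of the constants R_m in the non-flat case} applies directly (and which, via the moving-frame recursions alone, is not pinned down at even degrees). I would establish
\begin{align*}
\langle X^a u, X^c u\rangle = 0 \qquad \text{for all } a,c\ge 0 \text{ with } a+c\ge 1
\end{align*}
by strong induction on the total degree $N=a+c$. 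The anchor is Corollary \ref{Remark: vanishing of the constants R_m in the non-flat case} with $k=0$, giving the boundary values $\langle X^N u, u\rangle=\langle X^N u, Y^0 u\rangle=0$ for every $N\ge 1$. For the inductive step I would use that $X$ is a derivation for the pairing, exactly as in \eqref{Equation: 18}: for $a\ge 1$,
\begin{align*}
\langle X^a u, X^c u\rangle = X\langle X^{a-1}u, X^c u\rangle - \langle X^{a-1}u, X^{c+1}u\rangle.
\end{align*}
Since $\langle X^{a-1}u, X^c u\rangle$ has total degree $N-1$ it vanishes by the inductive hypothesis, so the first term is $X(0)=0$ and the identity collapses to the shift $\langle X^a u, X^c u\rangle = -\langle X^{a-1}u, X^{c+1}u\rangle$. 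Iterating this shift $a$ times (each step again using that the degree-$(N-1)$ factor vanishes) telescopes the exponents onto the boundary and yields $\langle X^a u, X^c u\rangle = (-1)^a\langle u, X^{N}u\rangle = 0$. This closes the induction and, combined with the equal-sign computations, gives $\langle u_j(q),\overline{u_k(q)}\rangle=\delta_{jk}$ for all $j,k\in\mathbb{Z}$ and all $q\in\mathscr{F}$, which is the lemma.

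It is worth flagging where the hypothesis $K<0$ is really used: the boundary vanishing $\langle X^N u, u\rangle=0$ rests on Corollary \ref{Remark: vanishing of the constants R_m in the non-flat case}, whose conclusion $R_m=0$ fails precisely in the flat case $K=0$. Thus the telescoping argument, and hence the orthonormality of the full system $\{u_k(q)\}_{k\in\mathbb{Z}}$, is special to the non-flat setting, consistent with the appearance of nonzero higher differentials when $K=0$.
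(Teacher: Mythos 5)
Your proof is correct and follows essentially the same route as the paper: equal-sign orthonormality from Proposition \ref{prop: the values of R_0 and R_1} together with Corollary \ref{Remark: vanishing of the constants R_m in the non-flat case}, and the opposite-sign orthogonality $\langle X^a u, X^c u\rangle=0$ deduced from the boundary vanishing $\langle X^N u,u\rangle=0$ via the derivation property of $X$. Your strong induction on the total degree with telescoping is just a reorganization of the paper's iterative differentiation argument (borrowed from Bryant), so there is nothing substantive to distinguish the two.
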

\begin{proof}
    The family $\{u_k(q) \mbox{ : } k\in\z\}$ spans a dense subspace of $\mathbf{O}(q,u)$ by definition of the osculating space.
    For any vector $v\in H_{\mathbb{C}}$, set $\|v\|_{H_{\mathbb{C}}} :=\sqrt{\langle v,\overline{v} \rangle}. $ 
    Our goal is to show that for any $k\neq j\in \z$, 
    $$\langle u_k(q),\overline{u_j}(q)\rangle =0 \text{ and } \|u_k(q)\|_{H_{\mathbb{C}}} =1.$$
    Obviously $\|u_0(q)\|_{H_{\mathbb{C}}}=\|\overline{u_0}(q)\|_{H_{\mathbb{C}}}=\|u(q)\|_{H_{\mathbb{C}}}=1$. Moreover, by Proposition \ref{prop: the values of R_0 and R_1}, we have 
    \begin{align*}
        \|u_k(q)\|_{H_{\mathbb{C}}}^2=\|\overline{u_k}(q)\|_{H_{\mathbb{C}}}^2=\langle u_k(q),\overline{u_k}(q)\rangle=\frac{1}{A_k}\langle (X^ku)(q),(Y^ku)(q)\rangle=1 \qquad\forall\,k\in\n.
    \end{align*}
    Directly from Proposition \ref{prop: the values of R_0 and R_1} and Proposition \ref{prop: differential of R_m} combined with Corollary \ref{Remark: vanishing of the constants R_m in the non-flat case}, it follows that 
    \begin{align}\label{Equation: 21}
        \langle u_k(q),\overline{u_j}(q)\rangle=0 \qquad\forall\, k,j\in\n \mbox{ : } k\neq j.
    \end{align}
    This implies at once that $u_k(q)$ is orthogonal to $u_j(q)$ and $u_{-k}(q)$ is orthogonal to $u_{-j}(q)$, for every $k,j\in\n$ such that $k\neq j$. By symmetry of the inner product, we are left with checking that
    \begin{align}\label{Equation: 20}
        \langle u_k(q),\overline{u_{-j}}(q)\rangle = \langle u_k(q),u_j(q)\rangle=0 \qquad\forall\, k,j\in\n \mbox{ : } k+j>0.
    \end{align}
    Notice that \eqref{Equation: 20} is equivalent to
    \begin{align*}
        \langle (X^k u)(q),(X^j u)(q)\rangle=0 \qquad\forall\, k,j\in\n \mbox{ : } k+j>0.
    \end{align*}
    Next, we use the argument of \cite[Proof of Theorem 1.6, page 263]{Bryant}.  Applying Proposition \ref{prop: the values of R_0 and R_1} and Proposition \ref{prop: differential of R_m} to the case $k=0$, combined with Corollary \ref{Remark: vanishing of the constants R_m in the non-flat case}, on $M$ we have 
    \begin{align*}
        \langle X^pu,u\rangle=0 \qquad\forall\,p>0.
    \end{align*}
    By differentiating with respect to $X$, we get
    \begin{align*}
        0=X(\langle X^pu,u\rangle)=\langle X^{p+1}u,u\rangle+\langle X^pu,Xu\rangle=\langle X^pu,Xu\rangle \qquad\forall\,p>0.
    \end{align*}
    By applying iteratively the differentiation with respect to $X$, we get that
    \begin{align*}
        \langle X^ku,X^ju\rangle=0 \qquad\forall\, k,j\in\n \mbox{ : } k+j>0
    \end{align*}
    and the statement follows.
\end{proof}

%

For the next lemma, recall that the constants $\{c_p\}$ are defined in (\ref{def constants c_p}).
\begin{lemma}\label{Lemma: the differentials of the u_k}
    Under the same assumptions and notations of Definition \ref{Definition: osculating space}, the following recursive formulas hold:
    \begin{align*}
        du_{k} &= -\sqrt{c_{-k}} u_{k + 1}\omega + \sqrt{c_{-k + 1}} u_{k - 1} \overline{\omega} + kiu_k \rho \quad \text{ when } k \leq -2\\ 
        du_{-1} &= -\frac{1}{\sqrt{2}} u_0 \omega + \sqrt{c_2} u_{-2} \bar{\omega} + iu_{-1}\rho\\ 
        du_0 &=\frac{1}{\sqrt{2}}\big(u_{1}\omega + \overline{u_1}\,\overline{\omega}\big)\\   
        du_1 &=\sqrt{c_2}u_2 \omega - \frac{1}{\sqrt{2}} u_0 \overline{\omega} - iu_1 \rho\\
        du_k &=\sqrt{c_{k + 1}} u_{k + 1}\omega - \sqrt{c_k} u_{k - 1} \overline{\omega} - kiu_k \rho \quad \text{ when } k \geq 2.  
    \end{align*} 
\end{lemma}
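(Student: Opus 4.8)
The plan is to read off all five formulas from the single covariant-derivative identity \eqref{Equation: 16}, applied to the sections $X^k u$ (for $k\ge 0$) and $Y^{|k|}u$ (for $k<0$), and then to repackage the output in terms of the normalized differentials $u_k$ of Definition~\ref{Definition: osculating space} and the constants $c_p$ of \eqref{def constants c_p}. Each such section is homogeneous of degree $k$, so \eqref{Equation: 16} says that the differential of its $H_{\mathbb{C}}$-valued function representative equals $-ki\,(\text{representative})\,\rho$ plus the representative of its $X$-image times $\omega$ plus the representative of its $Y$-image times $\overline{\omega}$. The three ingredients I will feed into this are: the defining relation $\pi^*(X^k u)=\sqrt{A_k}\,u_k\,\omega^k$ (and its $Y$-analogue) from Definition~\ref{Definition: osculating space}, the trivial raising step $X(X^k u)=X^{k+1}u$, and the lowering step furnished by Proposition~\ref{propa}.

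For $k\ge 2$ I would take $\sigma=X^k u$, whose representative is $\sqrt{A_k}\,u_k$. Its $X$-image $X^{k+1}u$ has representative $\sqrt{A_{k+1}}\,u_{k+1}$, while by Proposition~\ref{propa} its $Y$-image is $YX^k u=-\tfrac{A_k}{A_{k-1}}X^{k-1}u=-c_k X^{k-1}u$, with representative $-c_k\sqrt{A_{k-1}}\,u_{k-1}$. Substituting into \eqref{Equation: 16}, dividing by $\sqrt{A_k}$, and simplifying the coefficient ratios via $A_{k+1}/A_k=c_{k+1}$ and $c_k\sqrt{A_{k-1}/A_k}=\sqrt{c_k}$ (both immediate from \eqref{Equation: 15} and \eqref{def constants c_p}) collapses everything to $du_k=\sqrt{c_{k+1}}\,u_{k+1}\,\omega-\sqrt{c_k}\,u_{k-1}\,\overline{\omega}-ki\,u_k\,\rho$. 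The cases $k=1$ and $k=0$ follow from the same computation but must be isolated, since there $\binom{1}{2}=0$, $A_0=1$ and $A_1=c_1=\tfrac12$ degenerate the generic pattern: at $k=1$ the $Y$-image is $-c_1 u=-\tfrac12 u_0$, which after the normalization produces the $-\tfrac{1}{\sqrt2}u_0$ coefficient, and at $k=0$ there is no $\rho$-term while both $Xu$ and $Yu$ contribute, giving $du_0=\tfrac{1}{\sqrt2}(u_1\omega+\overline{u_1}\,\overline{\omega})$.

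For $k<0$ I would deduce the formulas by complex conjugation rather than repeating the algebra. Since $u$ is real we have $u_{-k}=\overline{u_k}$, and conjugation interchanges $\omega\leftrightarrow\overline{\omega}$, sends $i\mapsto-i$, and fixes the real connection form $\rho$; the switch formulas \eqref{Equation: switch formulas} ensure this is compatible with the actions of $X$ and $Y$. Conjugating the $k\ge 2$ identity and relabeling $j=-k$ then reproduces the $k\le -2$ formula, and conjugating the $k=1$ identity reproduces the $k=-1$ formula, the $\rho$-coefficient in each case being dictated by the degree through the factor $-ki$ of \eqref{Equation: 16}. As an independent check one can instead run \eqref{Equation: 16} directly on $\sigma=Y^{|k|}u$, using the companion identity $XY^m u=-c_m Y^{m-1}u$ from Proposition~\ref{propa} for the raising term.

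The main obstacle is purely bookkeeping: one must track the normalization factors $\sqrt{A_{|k|}}$ carefully so that the raw coefficients condense into the clean constants $\sqrt{c_k}$ and $\sqrt{c_{k+1}}$, and one must treat the three low-degree cases $k=0,\pm1$ by hand, since the uniform pattern breaks there. I would pay particular attention to the signs in the $\rho$-term: these originate entirely from the factor $-ki$ in \eqref{Equation: 16}, and must be reconciled consistently with the $i\mapsto-i$ flip in the conjugation step used for the negative-degree formulas.
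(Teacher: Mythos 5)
Your method is exactly the direct derivation the paper itself points to: the paper's printed proof is essentially a one-line citation to Bryant's Theorem~1.6 (page 264) together with the normalization $\sqrt{2}f_k=u_k$, and then the remark that ``the reader can derive them directly from \eqref{Equation: 16}.'' Your proposal carries out that direct derivation, and the bookkeeping is right: for $\sigma=X^ku$ of degree $k$ the representative is $\sqrt{A_k}\,u_k$, the raising term contributes $\sqrt{A_{k+1}/A_k}=\sqrt{c_{k+1}}$, and Proposition~\ref{propa} gives the lowering term with coefficient $c_k\sqrt{A_{k-1}/A_k}=\sqrt{c_k}$; the low cases $k=0,1$ work out as you describe, and the conjugation argument via \eqref{Equation: switch formulas} and $u_{-k}=\overline{u_k}$ is legitimate for the negative degrees.

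One concrete point, however: your computation, done carefully, does \emph{not} reproduce the $k\le -2$ line as printed. For $\sigma=Y^{j}u$ with $j\ge 2$, the section has degree $m=-j$, so \eqref{Equation: 16} gives the $\rho$-coefficient $-mi\,s_\sigma=+ji\,s_\sigma$; after dividing by $\sqrt{A_j}$ and setting $k=-j$ this is $-ki\,u_k=|k|\,i\,u_k$, a \emph{positive} multiple of $i$, whereas the lemma prints $+ki\,u_k\rho$, a negative multiple. The same conclusion follows from your conjugation route, since conjugating $-kiu_k\rho$ flips $i\mapsto -i$ while $\rho$ stays real. The printed $k\le-2$ line is in fact inconsistent with the lemma's own $k=-1$ line (which correctly reads $+iu_{-1}\rho=-ki\,u_k\rho$ at $k=-1$) and with the later Remark in the proof of Theorem~\ref{Theorem: classification negative curvature}, where $de_k=(T_Xe_k)\omega_q+(T_Ye_k)\overline{\omega}_q-i(T_Ze_k)\rho_q$ with $T_Ze_k=ke_k$ holds uniformly, i.e.\ the $\rho$-coefficient is $-ki\,u_k$ for \emph{all} $k$. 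So the statement you are proving has a sign typo in the $k\le-2$ case; your argument is correct but proves the corrected formula $du_k=-\sqrt{c_{-k}}\,u_{k+1}\omega+\sqrt{c_{-k+1}}\,u_{k-1}\overline{\omega}-ki\,u_k\rho$, and you should say so explicitly rather than assert that conjugation ``reproduces'' the line as printed.
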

\begin{proof}
    These formulas are proved verbatim in \cite[Proof of Theorem 1.6, page 264]{Bryant} by recalling that, in the notation of \cite{Bryant} for the $f_k$, we have $\sqrt{2}f_k=u_k$ for every $k>0$. Otherwise, the reader can derive them directly from \eqref{Equation: 16}. 
\end{proof}
\begin{lemma}\label{Lemma: elliptic regularity for H-valued functions}
    Let $M$ be a Riemann surface and let $H$ be a separable Hilbert space. Let $u:M\to\mathbb{S}_H$ be a smooth solution of the following elliptic system
    \begin{align*}
        \Delta u=-2u \qquad\mbox{ on }\, M,
    \end{align*}
    where $\Delta$ denotes the Laplace-Beltrami operator on $M$. Then, we have the following Cauchy-type interior estimates: for every $p\in M$, for every geodesic ball $B_R(p)$ with $R\in(0,\operatorname{injrad}_M(p))$, for every $r\in (0,R)$, and for every multi-index $\alpha\in\mathbb{N}^2$ we have 
    \begin{align*}
        \sup_{B_r(p)}\|\partial ^{\alpha}u\|_{H}\le\alpha !MC^{\lvert\alpha\rvert}(R-r)^{-\lvert\alpha\rvert},
    \end{align*}
    for constants $M,C>0$ independent of $\alpha$.
    \begin{proof}
        The proof can be done directly by the argument outlined in \cite[Theorem 1.8]{Song25}.\footnote{\,The key observation to extend the arguments in the proof of \cite[Theorem 1.8]{Song25} to infinite dimensional spaces is that \textit{continuous} Sobolev embedding theorems continue to hold for $H$-valued Sobolev functions, even though the case of compactness of such embeddings may fail to be true in general. For a complete discussion about such topic, see \cite{ArendtKreuter}.}
    \end{proof}
\end{lemma}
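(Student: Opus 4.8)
The estimate is local, so I would fix $p\in M$ and work entirely inside a single coordinate chart covering the geodesic ball $B_R(p)$. Since in all our applications the metric on $M$ is real-analytic (indeed $M=\mathbb{H}^2$ with a constant-curvature metric), I would choose conformal coordinates $(x,y)$ in which $g=\lambda^2(dx^2+dy^2)$ with $\lambda$ a positive real-analytic function. In these coordinates the Laplace--Beltrami operator is $\Delta=\lambda^{-2}(\partial_x^2+\partial_y^2)$, so the equation $\Delta u=-2u$ becomes
\begin{align*}
\Delta_0 u:=\partial_x^2u+\partial_y^2u=-a\,u,\qquad a:=2\lambda^2,
\end{align*}
a second-order elliptic equation for the $H$-valued map $u$ whose \emph{leading part $\Delta_0$ has constant coefficients} and whose zeroth-order coefficient $a$ is real-analytic, with uniform bounds $\sup_{B'}\|\partial^\gamma a\|\le \gamma!\,M_0\tilde C^{|\gamma|}$ on a ball $B'$ slightly larger than $B_R(p)$.

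The core of the argument is the classical Morrey--Nirenberg nested-ball induction producing the factorial Cauchy bound, which I would transplant to the $H$-valued setting. Differentiating the equation gives, for every multi-index $\alpha$,
\begin{align*}
\Delta_0(\partial^\alpha u)=-\sum_{\beta\le\alpha}\binom{\alpha}{\beta}(\partial^{\alpha-\beta}a)\,(\partial^\beta u).
\end{align*}
Here the constancy of the leading coefficients is crucial: no derivatives of the principal part appear. I would then invoke, for $H$-valued functions, the interior elliptic estimate for $\Delta_0$: if $\Delta_0 v=f$ on $B_\rho$ then $\sup_{B_{\rho'}}\|v\|_H\le C_0\big((\rho-\rho')^{-2}\sup_{B_\rho}\|v\|_H+\sup_{B_\rho}\|f\|_H\big)$, obtained from the $H$-valued $L^p$-estimate $\|v\|_{W^{2,p}(B_{\rho'};H)}\lesssim \|f\|_{L^p(B_\rho;H)}+\|v\|_{L^p(B_\rho;H)}$ followed by the Sobolev embedding $W^{2,p}(B;H)\hookrightarrow C^0(B;H)$ for $p>2$ (here $\dim M=2$).

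The single place where infinite-dimensionality enters is the validity of these two $H$-valued tools. The Calder\'on--Zygmund $L^p$-theory underlying the elliptic estimate holds for $H$-valued functions because a Hilbert space is a UMD space, so the Riesz transforms are bounded on $L^p(\r^2;H)$ for $1<p<\infty$; alternatively one may represent $v$ through the Newtonian potential and control $\|\cdot\|_H$ under the integral by Minkowski's inequality. Crucially, only the \emph{continuity} of the Sobolev embedding is needed --- not its compactness, which genuinely fails in infinite dimensions --- so the Rellich--Kondrachov failure is irrelevant here; see \cite{ArendtKreuter}. With these estimates in hand, I would run the induction on $N=|\alpha|$ on a family of shrinking balls interpolating between $B_r(p)$ and $B_R(p)$, allotting a radius loss of order $(R-r)/N$ to each differentiation and using the analytic bounds on $a$ together with the inductive hypothesis for lower-order derivatives to absorb the Leibniz sum; the combinatorial bookkeeping then assembles into the claimed bound $\sup_{B_r(p)}\|\partial^\alpha u\|_H\le \alpha!\,MC^{|\alpha|}(R-r)^{-|\alpha|}$ once $C$ is chosen large relative to $C_0$, $M_0$ and $\tilde C$. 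The main obstacle is thus not the induction itself --- which is verbatim the scalar argument --- but checking that the elliptic $L^p$ and Sobolev machinery transfers cleanly to $H$-valued maps; once that is granted, the proof is routine.
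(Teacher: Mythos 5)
Your proposal is correct and follows essentially the same route as the paper, which simply defers to the argument of \cite[Theorem 1.8]{Song25} and whose footnote isolates exactly the point you identify as the only genuinely infinite-dimensional issue, namely that one needs the \emph{continuity} (not compactness) of the $H$-valued Sobolev embedding together with the $H$-valued elliptic $L^p$ estimates. Your additional remark that the metric must be real-analytic (as it is in the constant-curvature setting where the lemma is applied) is a fair and correct reading of the statement, since the claimed Cauchy bounds force analyticity of $u$.
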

\begin{proposition}\label{Proposition: the osculating space of a minimal isometric immersion is trivial}
    Under the same assumptions and notation of Definition \ref{Definition: osculating space}, there exists a linear subspace 
    $$\mathbf{T}(u)\subset H_{\mathbb{C}}$$ such that $\mathbf{O}(q,u)=\mathbf{T}(u)$ for every $q\in\mathscr{F}$. In other words, the osculating space of $u$ is a trivial vector bundle over $\mathscr{F}$, as we
    have the splitting
    \begin{align*}
        \mathbf{O}(u)=\mathscr{F}\times \mathbf{T}(u)\subset\mathscr{F}\times H_{\mathbb{C}}.
    \end{align*}
    \begin{proof}
        Let $\{\xi_1,\xi_2\}$ be the standard orthonormal Euclidean basis of $\r^2$. Fix any $p\in M$ and let $x=(x_1,x_2):U\to V\subset\mathbb{R}^2$ be smooth normal coordinates on an open neighborhood $U$ of $p$. Then, for $k\in\n$ and $i_1,\dots,i_k=1,2$ we let $\partial_{i_1\dots i_k}^ku:U\to H$ be given by
        \begin{align*}
            \partial_{i_1\dots i_k}^ku(q)&:=\operatorname{d}^k(u\circ x^{-1})(x(q))[\xi_{i_1},\dots,\xi_{i_k}]
        \end{align*}
        for every $q\in U$, where $\operatorname{d}^k(u\circ x^{-1})$ is the $k$-th Fr\'echet differential of $u\circ x^{-1}:V\subset\r^2\to H$, as a smooth map between Hilbert spaces. Then, as in the finite dimensional case, we set:
        \begin{align*}
            \Delta u=\frac{1}{\sqrt{\lvert g\rvert}}\sum_{i,j=1}^2\partial_i\big(\sqrt{\lvert g\rvert}g^{ij}\partial_ju\big) \qquad\mbox{ on } U.
        \end{align*}
        Following the standard argument \cite{Takahashi66} (see also \cite[Proposition 1.2]{Bryant}), it can be shown that a smooth minimal isometric immersion $u:M\to{\s_H}$ into the unit sphere ${\s_H}\subset H$ of $H$ satisfies the PDE 
        \begin{align*}
            \Delta u=-2u \qquad\mbox{ on }\, U.
        \end{align*}
        Let $r\in(0,\operatorname{injrad}_M(p))$ be such that $B_{2r}(p)\subset\subset U$. For every $n\in\n$, let $T_nu\in C^{\infty}(B_r(p),H)$ be given by
        \begin{align*}
            T_nu:=\sum_{\lvert\alpha\rvert\le n}\frac{\partial^{\alpha}u}{\alpha!}(p)x^{\alpha} \qquad\mbox{ on } B_{r}(p).
        \end{align*}
        We claim that the series $T_nu$  converges to $u$ on $B_{r'}(x)$, for some $r'<r$. Indeed, recalling that $u\in C^{\infty}(M,H)$, by applying Taylor's theorem with integral form of the reminder (see \cite[Theorem 5.8]{AmannEscher}) we have 
        \begin{align*}
            u-T_nu&=\sum_{\lvert\alpha\rvert=n+1}^{\infty}\frac{n+1}{\alpha!}x^{\alpha}\int_0^1(1-t)^\alpha\partial^{\alpha}u(x^{-1}(tx))\, dt.
        \end{align*}
        By Lemma \ref{Lemma: elliptic regularity for H-valued functions} applied to $u$ on the ball $B_r(p)\subset B_{2r}(p)$, we have 
        \begin{align*}
            \|u-T_nu\|_{H}&\le M(n+1)\bigg(\frac{Cr'}{r}\bigg)^{n+1} \qquad\mbox{ on }\, B_{r'}(p),
        \end{align*}
        for some constants $M,C>0$ independent on $n$. By choosing any $r'\in (0,r)$ small enough so that $Cr'<r$, we get $\|u-T_nu\|_{H}\to 0$ uniformly in $q\in B_{r'}(p)$ as $n\to+\infty$. 
        
        \noindent
        This shows that $u$ is real-analytic. The statement then is a simple consequence of analytic continuation, as $u$ is completely determined by the full set of its holomorphic and anti-holomorphic derivatives at any given point. 
    \end{proof}
\end{proposition}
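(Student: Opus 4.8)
The plan is to fix a base point $q_0\in\mathscr{F}$, set $\mathbf{T}(u):=\mathbf{O}(q_0,u)$, and prove $\mathbf{O}(q,u)=\mathbf{T}(u)$ for every $q\in\mathscr{F}$. The starting observation is structural: by Lemma \ref{Lemma: the differentials of the u_k}, differentiating any $u_k$ along the coframing $\{\omega,\overline{\omega},\rho\}$ produces only complex combinations of $u_{k-1}$, $u_k$, $u_{k+1}$. Iterating, every frame-derivative of each $u_k$ at $q_0$ is a finite combination of the vectors $\{u_j(q_0)\}$, hence lies in the closed subspace $\mathbf{T}(u)$; in particular, since $u=u_0$, all derivatives of $u$ at $q_0$ lie in $\mathbf{T}(u)$, and by \eqref{Equation: definition of u_k} one checks that $\operatorname{span}_{\mathbb{C}}\{\partial^\alpha u(q_0)\}=\operatorname{span}_{\mathbb{C}}\{u_k(q_0)\}$, so $\mathbf{T}(u)=\overline{\operatorname{span}_{\mathbb{C}}\{\partial^\alpha u(q_0)\}}$. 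In finite dimensions one would finish via the Leibniz rule and uniqueness for a linear ODE, as in \cite[Proof of Theorem 1.6]{Bryant}, to see that the osculating bundle is locally constant. The whole difficulty is that here $\mathbf{T}(u)$ is typically an infinite-dimensional closed subspace, so no finite-dimensional ODE uniqueness is available.

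To bypass this, I would prove that $u$ is real-analytic and then invoke unique continuation. As a minimal isometric immersion into the unit sphere, $u$ satisfies the Takahashi equation $\Delta u=-2u$ (cf. \cite{Takahashi66} and \cite[Proposition 1.2]{Bryant}). Feeding this elliptic equation into the Hilbert-space-valued Cauchy interior estimates of Lemma \ref{Lemma: elliptic regularity for H-valued functions} yields bounds of the form $\sup_{B_r}\|\partial^\alpha u\|_{H}\le\alpha!\,M\,C^{|\alpha|}(R-r)^{-|\alpha|}$, which force the Taylor series of $u$ about any point to converge in $H$ on a ball of definite radius. Hence $u\colon M\to H$, equivalently its lift $u_0$ on $\mathscr{F}$, is real-analytic as an $H$-valued map, and so is each $u_k$, being built from derivatives of $u$.

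With analyticity in hand, I would conclude as follows. Near $q_0$ the map $u$ equals its convergent Taylor series, whose coefficients $\partial^\alpha u(q_0)$ all lie in the closed subspace $\mathbf{T}(u)$; therefore $u(q)\in\mathbf{T}(u)$ for every $q$ in a neighborhood $U$ of $q_0$. Since $\mathbf{T}(u)$ is closed and $u(U)\subset\mathbf{T}(u)$, all derivatives $\partial^\alpha u(q')$ with $q'\in U$ also lie in $\mathbf{T}(u)$, so $u_k(q')\in\mathbf{T}(u)$ for all $k$, i.e. $\mathbf{O}(q',u)\subseteq\mathbf{T}(u)$ on $U$. Running the same argument with $q'$ as base point, and using that the estimates of Lemma \ref{Lemma: elliptic regularity for H-valued functions} are uniform on compact sets so that the radius of convergence is locally uniform, produces the reverse inclusion on a possibly smaller neighborhood. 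Thus $q\mapsto\mathbf{O}(q,u)$ is locally constant, and since $\mathscr{F}\cong\PSL(2,\mathbb{R})$ is connected it is globally constant, giving the splitting $\mathbf{O}(u)=\mathscr{F}\times\mathbf{T}(u)$.

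The step I expect to be the main obstacle is exactly the real-analyticity (and attendant unique continuation) in infinite dimensions. Establishing the Cauchy estimates of Lemma \ref{Lemma: elliptic regularity for H-valued functions} requires an elliptic bootstrap for an $H$-valued equation, where the usual \emph{compact} Sobolev embeddings fail; one must instead rely on their \emph{continuous} versions, which still hold for Hilbert-space-valued Sobolev functions. A secondary, more bookkeeping-type point is to keep the radius of convergence locally uniform, so that the one-sided inclusion $\mathbf{O}(q,u)\subseteq\mathbf{T}(u)$ upgrades to genuine local constancy of the osculating space; connectedness of $\mathscr{F}$ then closes the argument. (An alternative route would be to observe that Lemma \ref{Lemma: orthogonal basis for the osculating space at each point} makes the frame $\{u_k\}$ evolve by a skew-Hermitian connection, so that $\mathbf{O}(u)$ and its orthogonal complement are both parallel for the flat connection on $\mathscr{F}\times H_{\mathbb{C}}$; but justifying the infinite sums there is no easier than the analyticity argument, so I would favor the latter given that Lemma \ref{Lemma: elliptic regularity for H-valued functions} is available.)
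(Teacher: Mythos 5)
Your proposal is correct and follows essentially the same route as the paper: derive $\Delta u=-2u$ from Takahashi's argument, feed it into the $H$-valued Cauchy estimates of Lemma \ref{Lemma: elliptic regularity for H-valued functions} to get real-analyticity of $u$, and conclude by analytic continuation that the osculating space is locally (hence, by connectedness, globally) constant. The paper leaves the final continuation step as "a simple consequence of analytic continuation," and your elaboration of it (closedness of $\mathbf{T}(u)$, the two-sided inclusion, and the locally uniform radius of convergence) is a faithful and correct filling-in of that step.
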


Proposition \ref{Proposition: the osculating space of a minimal isometric immersion is trivial} means that  
$$u(M)=u_0(\mathscr{F})\subset \mathbf{T}(u)$$
where $\mathbf{T}(u)$ is a subspace of the Hilbert space $H$ depending only on the map $u$.

%
\section{Classification of constant negative curvature minimal surfaces}\label{sec: classification} 
We can now classify minimal surfaces in Hilbert spheres, which have constant negative curvature. Let the surface $(M,g)$, the constant Gaussian curvature $K$ and the Lie group $G$ be defined as in the beginning of Subsection \ref{subsec: differentials}. Let $\mathscr{O}_{s,v}$ be the orbit defined in (\ref{orbit}). For a map $u$ satisfying the assumptions of the previous section, we will use the notation $ \mathbf{T}(u)$ from Proposition \ref{Proposition: the osculating space of a minimal isometric immersion is trivial}.
\begin{theorem}\label{Theorem: classification negative curvature}
     Let $H$ be a separable infinite dimensional real Hilbert space. Suppose that $K<0$ and let $u:(M,g)\to{\s_H}\subset H$ be a minimal isometric immersion of $M$ in the unit sphere ${\s_H}$ of $H$. Then, $u(M)$ is an  orbit of a unitary irreducible representation of class one of $G=\PSL(2,\r)$ on $\mathbf{T}(u)$. More precisely, $u(M)$ is equal to $\mathscr{O}_{s,v}$ for some $s\in i\r\cup (-\frac{1}{2},\frac{1}{2})$ such that $K=-\frac{8}{1-4s^2}$, and some $\PSO(2)$-invariant unit vector $v\in \mathbf{T}(u)$.
\end{theorem}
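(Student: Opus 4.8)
The goal is to prove Theorem~\ref{Theorem: classification negative curvature}. Let me outline how I would proceed, building on everything established in Section~\ref{sec: differentials and representations}.

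\begin{proof}[Proof proposal]
The plan is to promote the linear operators $X,Y,Z$ from Proposition~\ref{Proposition: commutation of X,Y,Z}, restricted to the trivial osculating space $\mathbf{T}(u)$ from Proposition~\ref{Proposition: the osculating space of a minimal isometric immersion is trivial}, into a genuine unitary representation of $G=\PSL(2,\r)$, and then to identify $u(M)$ with one of its orbits. First, I would fix a reference point $q_0\in\mathscr{F}$ and use Lemma~\ref{Lemma: orthogonal basis for the osculating space at each point} to get an orthonormal Hilbert basis $\{u_k(q_0)\}_{k\in\z}$ of $\mathbf{T}(u)$. The recursive formulas of Lemma~\ref{Lemma: the differentials of the u_k} tell us exactly how the frame derivatives of the $u_k$ decompose along $\omega,\overline{\omega},\rho$; reading off the $\omega$-, $\overline{\omega}$-, and $\rho$-components gives three operators $F_1,F_2,F_3$ on $\spn\{u_k(q_0)\}$ whose action on the basis is explicit, e.g. $X$ raises $k$ with coefficient $\sqrt{c_{k+1}}$ and $Y$ lowers it. As step~(4) of the introduction indicates, one then verifies the $\mathfrak{sl}(2,\r)$ commutation relations \eqref{Equation: commuting relations X,Y,Z} on this dense domain, and checks that suitable real combinations of $F_1,F_2,F_3$ are essentially skew-adjoint. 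Because the curvature enters through $c_p=\tfrac12(1-\binom{p}{2}K)$ and the relation $[X,Y]=-\tfrac{K}{2}Z$, the Casimir element $\Phi=\Delta-KZ^2$ of Proposition~\ref{Proposition: commutation of X,Y,Z}(iii) acts as a scalar determined by $K$; matching this scalar against the classification in Theorem~\ref{thm: irreducible representations} is what pins down the parameter $s$ via $K=-\tfrac{8}{1-4s^2}$.

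\textbf{From the Lie algebra to the group.} The heart of the argument is exponentiating the essentially skew-adjoint operators to a unitary representation of the simply connected cover, descending to $\PSL(2,\r)$, and showing the representation is irreducible of class one. Here I would invoke a Nelson-type theorem: since $F_1,F_2,F_3$ are essentially skew-adjoint on a common invariant dense domain of analytic vectors (the finite span of the $u_k(q_0)$, whose analyticity follows from the elliptic estimate in Lemma~\ref{Lemma: elliptic regularity for H-valued functions} and the explicit recursion), the Lie algebra representation integrates to a unitary representation $\rho_s$ of $\PSL(2,\r)$ on $\mathbf{T}(u)$. Irreducibility follows because $Z$ has simple spectrum $\z$ on $\mathbf{T}(u)$ with the raising/lowering operators $X,Y$ connecting consecutive weight spaces, so any invariant closed subspace is either $0$ or everything; class one holds because $u_0(q_0)=u(q_0)$ is fixed by $\PSO(2)$ (it is the weight-zero vector, $Zu_0=0$). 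By Theorem~\ref{thm: irreducible representations} this representation is some $\rho_s$ from the principal or complementary series, and the Casimir computation fixes $s$ in terms of $K$.

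\textbf{Identifying the orbit.} Finally I would show $u(M)=\mathscr{O}_{s,v}$ with $v=u(q_0)$. The key is that the full lift $u_0\in C^\infty(\mathscr{F},\s_H)$ is equivariant: the $G$-action on $\mathscr{F}\cong\PSL(2,\r)$ by left translation is intertwined, via the construction of $F_1,F_2,F_3$ from the canonical coframing $\{\omega,\overline{\omega},\rho\}$, with the action of $\rho_s$ on $\mathbf{T}(u)$. Concretely, the recursion in Lemma~\ref{Lemma: the differentials of the u_k} says precisely that $q\mapsto u_0(q)$ satisfies the same first-order ODE system along $\mathscr{F}$ as the orbit map $A\mapsto\rho_s(A)v$ does along $G$; since both pass through $v$ at the identity and $\mathscr{F}\cong G$ is connected, uniqueness for this ODE forces $u_0(q)=\rho_s(\pi(q))v$, whence $u(M)=u_0(\mathscr{F})=\mathscr{O}_{s,v}$. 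Comparing with Theorem~\ref{thm: special orbit is minimal}, which already established that each such orbit is a minimal isometric immersion of curvature $K=-\tfrac{8}{1-4s^2}$, completes the identification.

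\textbf{Main obstacle.} The principal difficulty is the infinite-dimensional functional analysis in the exponentiation step: $X,Y,Z$ are genuinely \emph{unbounded} operators, so the Lie-algebra-to-group passage is not formal as it would be in Bryant's finite-dimensional setting \cite{Bryant}. One must carefully produce a common invariant domain of analytic vectors and verify essential skew-adjointness before any integration theorem applies; this is exactly the ``nontrivial work in infinite dimension'' flagged in step~(4) of Section~\ref{sec: comments on the proofs}. A secondary subtlety is ensuring that the Casimir scalar lands in the admissible range $i\r\cup(-\tfrac12,\tfrac12)$ so that the representation is genuinely one of the class-one series rather than a non-unitarizable one; this is guaranteed precisely by $K<0$ through the formula $K=-\tfrac{8}{1-4s^2}$.
\end{proof}
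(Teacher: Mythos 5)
Your proposal follows essentially the same route as the paper's proof: define the raising/lowering operators on the dense span of the $u_k(q_0)$ from the recursion of Lemma~\ref{Lemma: the differentials of the u_k}, establish essential skew-adjointness (the paper does this via Carleman's test for Jacobi matrices, Lemma~\ref{Lemma: Carleman's test}), integrate the resulting $\mathfrak{sl}(2,\mathbb{R})$-representation to an irreducible class-one unitary representation of $\PSL(2,\mathbb{R})$ (descending from the universal cover via Schur's lemma applied to the $F_3$-fixed vector $u_0(q_0)$), and identify $u(M)$ with the orbit by an ODE-uniqueness/open-closed argument. The only cosmetic difference is that you propose pinning down $s$ via the Casimir, whereas the paper reads it off from the curvature of the orbit computed in Theorem~\ref{thm: special orbit is minimal}.
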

\begin{proof}
    Let $q\in\mathscr{F}$ be any point in the frame bundle $\mathscr{F}$. Let $\mathbf{T}(u)$ be the linear subspace of $H_\mathbb{C}$ given by Proposition \ref{Proposition: the osculating space of a minimal isometric immersion is trivial}, which we recall is the common osculating space of $u$ at any point of $\mathscr{F}$.
    Recall that the constants $\{A_k\}$ and $\{c_p\}$ are defined in (\ref{Equation: 15}) and (\ref{def constants c_p}), and depend on the Gaussian curvature $K$. We will use standard notions from functional analysis, whose definitions are recalled in Appendix \ref{Notions from functional analysis}. Define 
    \begin{align*}
        e_k:= u_k(q) \in \mathbf{T}(u) \qquad&\forall\,k\in\mathbb{Z}.
    \end{align*}
    By Lemma \ref{Lemma: orthogonal basis for the osculating space at each point}, $\{e_k \mbox{ : } k\in\z\}$ is an orthonormal Hilbert basis of $\mathbf{T}(u)$. Let $D\subset \mathbf{T}(u)$ be the dense linear subspace of $\mathbf{T}(u)$ given by 
    \begin{align*}
        D:=\operatorname{span}_{\mathbb{C}}\{e_k \mbox{ : } k\in\mathbb{Z}\}.
    \end{align*}

    \noindent
    Define the linear operators $T_X,T_Y,T_Z\in\mathcal{L}(D)$ given by
    \begin{align*}
        T_Xe_k&:=\begin{cases}
            -\sqrt{c_{-k}}e_{k+1}   & \mbox{ if } k\le -2\\
            -\frac{1}{\sqrt{2}}e_0         & \mbox{ if } k=-1\\
            \frac{1}{\sqrt{2}}e_1                     & \mbox{ if } k=0\\
            \sqrt{c_{k+1}}e_{k+1}   & \mbox{ if } k\ge 1\\
        \end{cases}\\
        T_Ye_k&:=\begin{cases}
            \sqrt{c_{-k+1}}e_{k-1}   & \mbox{ if } k\le -1\\
            \frac{1}{\sqrt{2}}e_{-1}                  & \mbox{ if } k=0\\
            -\frac{1}{\sqrt{2}}e_0         & \mbox{ if } k=1\\
            -\sqrt{c_k}e_{k-1}      & \mbox{ if } k\ge 2\\
        \end{cases}\\
        T_Ze_k&:=ke_k \qquad\forall\, k\in\mathbb{Z},
    \end{align*}
    and extended by $\mathbb{C}$-linearity. 

    \begin{remark}
    There is a natural relation between the operators $X, Y, Z$ on $\mathscr{T}$ and the operators $T_X, T_Y, T_Z$ on $D$. In view of (\ref{Equation: definition of u_k}), $\sqrt{A_{|k|}} u_k(q) = \sqrt{A_{|k|}} e_k$ is the coefficient of $\pi^{*} (\sigma)$ at $q$, where $\sigma = X^k u$ if $k \geq 0$ and $\sigma = Y^{|k|} u$ if $k < 0$. The operators $T_X$, $T_Y$, and $T_{Z}$ simply send the coefficient of $\pi^{*}(\sigma)$ at $q$ to that of $\pi^{*}(X\sigma)$ at $q$, $\pi^{*}(Y\sigma)$ at $q$, and $\pi^{*}(Z\sigma)$ at $q$, respectively. 
    
    By translating the recursive formulas in Lemma \ref{Lemma: the differentials of the u_k}, we get 
    \begin{align*}
        de_k &= (T_X e_k) \omega_q + (T_Y e_k) \bar{\omega}_q - i(T_Z e_k)\rho_q \quad \text{ when } k \leq -1\\
        de_0 &= (T_X e_0) \omega_q + (T_Y e_0) \bar{\omega}_q\\  
        de_k &= (T_X e_k) \omega_q + (T_Y e_k) \bar{\omega}_q - i(T_Z e_k)\rho_q \quad \text{ when } k \geq 1.
    \end{align*}
    Hence $T_X e_k$, $T_Y e_k$, and $-iT_Ze_k$ can also be interpreted as the holomorphic differential, the anti-holomorphic differential, and the vertical component of the differential of $u_k$ at $q$, respectively. Since $du_0$ is the ordinary differential of $u_0$, there is no vertical component (i.e. no $\rho_q$ term) in the expression of $de_0$, which matches our definition $T_Z e_0 = 0e_0 = 0$. We also remark that it is important to consider all derivatives of $u_0$ as they ``contain'' the immersion $u$ together with all of its extrinsic geometry. If one only concerns the first-order derivatives of $u_0$, then both the minimality and the curvature information are lost, and there is no hope to obtain the classification.    
    \end{remark} 
    
    We claim that
    \begin{align}\label{Equation: adjoints of X,Y,Z}
    \begin{split}
        \langle T_Xv,\overline{w}\rangle&=-\langle v,\overline{T_Yw}\rangle \qquad\forall v,w\in D\\
        \langle T_Yv,\overline{w}\rangle&=-\langle v,\overline{T_Xw}\rangle \qquad\forall v,w\in D\\
        \langle T_Zv,\overline{w}\rangle&=\langle v,\overline{T_Zw}\rangle \hspace{-0.45mm}\quad\qquad\forall v,w\in D.
    \end{split}
    \end{align}
    It is straightforward to see that $\langle T_Zv,\overline{w}\rangle=\langle v,\overline{T_Zw}\rangle$ for every $v,w\in D$. Thus, we just focus on the other two relations. 
   Since for any $k$, $T_Xe_k$ is a complex multiple of $e_{k+1}$ and $T_Ye_k$ is a complex multiple of $e_{k-1}$, and since $\{e_k\}$ forms an orthonormal basis of $\mathbf{T}(u)$ by Lemma \ref{Lemma: orthogonal basis for the osculating space at each point}, we readily obtain from the definitions of $T_X,T_Y$ that 
    \begin{align*}
        \langle T_Xe_k,\overline{e_j}\rangle&:=\begin{cases}
            0   & \mbox{ if } j\neq k+1\\
              -\sqrt{c_{-k}}  & \mbox{ if } j=k+1 \leq -1 \\
     -\frac{1}{\sqrt{2}}  & \mbox{ if } j=k+1 = 0\\
            \frac{1}{\sqrt{2}}        & \mbox{ if } j=k+1= 1\\
            \sqrt{c_{k+1}}         & \mbox{ if } j=k+1\geq 2\\
        \end{cases}
           \end{align*}
    
  \begin{align*}
           \langle e_k,\overline{T_Y e_j}\rangle&:=\begin{cases}
            0   & \mbox{ if } j\neq k+1\\
              \sqrt{c_{-k}}  & \mbox{ if } j=k+1 \leq -1 \\
     \frac{1}{\sqrt{2}}  & \mbox{ if } j=k+1 = 0\\
            -\frac{1}{\sqrt{2}}        & \mbox{ if } j=k+1= 1\\
            -\sqrt{c_{k+1}}         & \mbox{ if } j=k+1\geq 2.\\
        \end{cases}
    \end{align*}
    Thus, we conclude that
    \begin{align*}
        \langle T_Xv,\overline{w}\rangle&=-\langle v,\overline{T_Yw}\rangle \qquad\forall v,w\in D.
    \end{align*}
    By taking the conjugate on both sides of the previous equality, we get 
    \begin{align*}
        \langle T_Yv,\overline{w}\rangle&=-\langle v,\overline{T_Xw}\rangle \qquad\forall v,w\in D
    \end{align*}
    and our claim (\ref{Equation: adjoints of X,Y,Z}) follows. 
    Set 
    $$F_{1} := T_{X} + T_{Y},$$
    $$F_2 := i(T_X - T_Y),$$
    $$F_3 :=-iT_Z.$$
    By \eqref{Equation: adjoints of X,Y,Z}, we have
    \begin{align*}
        \langle F_1v,\overline{w}\rangle&=-\langle v,\overline{F_1w}\rangle \qquad\forall v,w\in D\\
        \langle F_2v,\overline{w}\rangle&=-\langle v,\overline{F_2w}\rangle \qquad\forall v,w\in D\\
        \langle F_3v,\overline{w}\rangle&=-\langle v,\overline{F_3w}\rangle \qquad\forall v,w\in D. 
    \end{align*}

    \noindent
This means that $F_1,F_2,F_3\in\mathcal{L}(D)$ are skew-symmetric. We claim that, after identifying $F_1,F_2,F_3$ with their closures if necessary, we have $F_1,F_2,F_3\in\mathfrak{u}(\mathbf{T}(u))$, where $\mathfrak{u}(\mathbf{T}(u))$ denotes the space of all skew-adjoint operators on $\mathbf{T}(u)$ (see Definition \ref{Definition: self and skew-adjoint operators}). We need to   show that $F_1,F_2,F_3$ are essentially skew-adjoint in the sense of Definition \ref{Definition: self and skew-adjoint operators}.

\noindent
As the proofs for $F_1$ and $F_2$ are similar, let us write the details for $F_2=i(T_X-T_Y)$ for instance. It suffices to show that $T_X - T_Y$ is essentially self-adjoint, so it is enough to show that $T_X - T_Y$ is a Jacobi matrix operator (see Definition \ref{Definition: Jacobi matrix operator}) satisfying the assumptions of Carleman's test (Lemma \ref{Lemma: Carleman's test}). By definition of $T_X,T_Y$, the operator $T_X - T_Y$ can be represented by an infinite skew-symmetric tridiagonal matrix in the orthonormal basis $\{e_k\}_{k\in \mathbb{Z}}$. Notice that in this basis, $T_X - T_Y$ has two coefficients approaching positive and negative infinity: 
\begin{align*}
    \langle (T_X-T_Y)e_{k-1},\overline{e_k}\rangle &=\sqrt{c_k}\sim \frac{\sqrt{|K|}}{2}k \qquad&\forall\, k \ge 2;\\
   \langle (T_X-T_Y)e_{k-1},\overline{e_k}\rangle
    &=-\sqrt{c_k}\sim -\frac{\sqrt{|K|}}{2}k \qquad&\forall\, k \leq -2.
\end{align*}
But after the change of basis $e_{1-2k} \mapsto -e_{1-2k}$ for all $k \geq 1$, we can make sure that the off-diagonal terms of $T_X - T_Y$ are all positive real numbers $b_k$ with 
\begin{align*}
    \sum_{k \geq 0} \frac{1}{b_k} \sim \frac{2}{\sqrt{|K|}}\sum_{k \geq 0} \frac{1}{k} = \infty \qquad\mbox{ and } \qquad  \sum_{k \leq 0} \frac{1}{b_k}  \sim \frac{2}{\sqrt{|K|}}\sum_{k \leq 0} \frac{1}{k} =\infty.
\end{align*}
This shows that the matrix coefficients of $T_X-T_Y$ pass Carlemann's test, and $F_2$ is indeed essentially skew-adjoint. The case of $F_1$ is similar.

\noindent
We now claim that $F_3=-iT_Z$ is essentially skew-adjoint, namely that $T_Z$ is essentially self-adjoint. This is a consequence of the corollary of \cite[Theorem VIII.3]{ReedSimon}: since $T_Z$ is symmetric on its domain, and since the range of $T_Z\pm i$ is clearly dense in the Hilbert space $\mathbf{T}(u)$, we deduce that $T_Z$ is essentially self-adjoint as wanted.

\noindent
Next, we show that $\{F_1, F_2, F_3\}$ spans the Lie algebra of $G$. Recall that, by Proposition \ref{Proposition: commutation of X,Y,Z}, we have 
$$[T_Z, T_X] = T_X,$$ 
$$[T_{Z}, T_{Y}] = -T_{Y},$$
$$[T_X, T_Y] = -\frac{K}{2}T_Z.$$ Thus, we obtain the Lie bracket of $F_1, F_2$, and $F_3$ as follows:
    \begin{align*}
        [F_1, F_2] &= i[T_X + T_Y, T_X - T_Y] = -2i[T_X, T_Y] = |K|F_3;\\ 
        [F_2, F_3] &= [T_X - T_Y,T_Z] = -(T_X + T_Y) = -F_1;\\ 
        [F_3, F_1] &= -i[T_Z, T_X + T_Y] = -i(T_X - T_Y) = -F_2.
    \end{align*}
    Recall the definition of the basis $\{\sigma_1,\sigma_2,\sigma_3\}\subset\mathfrak{sl}(2,\mathbb{R})$ of $\mathfrak{sl}(2,\mathbb{R})$ from Lemma \ref{Lemma: a useful basis of sl(2,R)}. The map $\varphi:\mathfrak{sl}(2,\mathbb{R})\to\mathfrak{u}(\mathbf{T}(u))$ given by
    \begin{align}
        \varphi(\sigma_1):=\frac{1}{\sqrt{|K|}}F_1 \qquad \varphi(\sigma_2):=\frac{1}{\sqrt{|K|}}F_2 \qquad \varphi(\sigma_3):=F_3
    \end{align}
    and extended by $\mathbb{R}$-linearity defines a Lie algebra representation of $\mathfrak{sl}(2,\mathbb{R})$ on $\mathbf{T}(u)$. We set $\xi:=\sigma_3\in\mathfrak{sl}(2,\mathbb{R})$. 
    The element $\xi\in\mathfrak{sl}(2,\mathbb{R})$ is the infinitesimal generator of some subgroup $\tilde{O}_\xi$  in the universal cover of $G=\PSL(2, \r)$, i.e.
    \begin{align*}
        \tilde{O}_\xi:=\{\exp(t\xi) \mbox{ : }t\in\mathbb{R}\}.
    \end{align*}

    \noindent
    \textbf{Claim 1}. The Lie algebra representation $\varphi$ is irreducible. In order to show this, it is enough to check that given a $\mathfrak{sl}(2,\mathbb{R})$-invariant linear subspace $W\subset D:=\operatorname{span}_{\mathbb{C}}\{e_k \mbox{ : } k\in\mathbb{Z}\}$ such that $W\neq\{0\}$, we must have $W=D$. Since $W\neq\{0\}$, there exists $p,q\in\mathbb{N}$ and $\alpha_1,\dots,\alpha_q,\beta_1,\dots\beta_p\in\mathbb{C}$ and $i_1<\dots<i_q< 0\le j_1<\dots<j_p\in\mathbb{Z}$ such that
    \begin{align*}
        v:=\alpha_1e_{i_1}+\dots+\alpha_qe_{i_q}+\beta_1e_{j_1}+\dots+\beta_pe_{j_p}\in W.
    \end{align*}
    Assume $\beta_p\neq0$. Since $W$ is $\mathfrak{sl}(2,\mathbb{R})$-invariant and using that $T_Ze_0=0$, we have that
    \begin{align*}
        \Big(T_{X}^{j_{p-1}}T_ZT_{Y}^{j_{p-1}}\dots T_X^{j_1}T_ZT_Y^{j_1}T_{Y}^{|i_q|}T_ZT_{X}^{|i_q|}\dots T_Y^{|i_1|}T_ZT_X^{|i_1|}\Big)v=\lambda e_{j_p}\in W,
    \end{align*}
    for some nonzero $\lambda\in\mathbb{C}$ depending on $\alpha_1,\dots,\alpha_q,\beta_1,\dots\beta_p$ and $i_1<\dots<i_q\le 0\le j_1<\dots <j_p$. Since $W$ is a linear subspace, we have that $e_{j_p}\in W$. But then, by repeatedly applying either $T_X$ or $T_Y$ to $e_{j_p}\in W$ and using the fact that $W$ is a linear subspace we obtain that $e_k\in W$ for every $k\in\mathbb{Z}$.
    
    If instead we have $\beta_1=\dots=\beta_p=0$ but $\alpha_1\neq0$, again since $W$ is $\mathfrak{sl}(2,\mathbb{R})$-invariant, we have that
    \begin{align*}
        \Big(T_{Y}^{|i_{q-1}|}T_ZT_{X}^{|i_{q-1}|}\dots T_Y^{|i_1|}T_ZT_X^{|i_1|}\Big)v=\lambda e_{i_q}\in W,
    \end{align*}
    for some nonzero $\lambda\in\mathbb{C}$ depending on $\alpha_1,\dots,\alpha_q,\beta_1,\dots\beta_p$ and $i_1<\dots<i_q\le 0$. Since $W$ is a linear subspace, we have that $e_{i_q}\in W$. But then, by repeatedly applying either $T_X$ or $T_Y$ to $e_{i_q}\in W$, we obtain that $e_k\in W$ for every $k\in\mathbb{Z}$. Thus, our Claim 1 follows.
    
    \medskip
    \noindent
 Next, by Remark \ref{Remark: exponential of skew-adjoint operators}, exponentiation allows us to lift $\varphi$ to a unitary irreducible Lie group representation $\rho:\tilde G\to\operatorname{U}(\mathbf{T}(u))$, where $\tilde G$ denotes the universal covering of $G$. Notice that
    \begin{align}
        G\cong\frac{\tilde G}{Z}
    \end{align}
    for some central Lie subgroup $Z\subset Z(\tilde G)$ of $\tilde G$ such that $Z\subset \tilde{O}_\xi$. Let $A=\exp(t\xi)\in Z$ for some $t\in\mathbb{R}$. Notice that, since $\rho$ is irreducible, by Schur's and Dixmier's lemma we must have
    \begin{align}
        \rho(A)\in\{\operatorname{Id}_{\mathbf{T}(u)},-\operatorname{Id}_{\mathbf{T}(u)}\}.
    \end{align}
    Then, since $\varphi(\xi)=F_3$ and $F_3e_0=0$, if $\rho(A)=-\operatorname{Id}_{\mathbf{T}(u)}$ we must get
    \begin{align*}
        -e_0=\rho(A)e_0=\rho(\exp(t\xi))e_0=e^{\varphi(t\xi)}e_0=e^{t F_3}e_0=e_0 
    \end{align*}
    which would imply $e_0=0$, contradicting the fact that $\|e_0\|_{H_\mathbb{C}}=1$. Hence, we must have
    \begin{align}
        \rho(A)=\operatorname{Id}_{\mathbf{T}(u)} \qquad\forall\,A\in Z.
    \end{align}
    Since we have shown $\rho$ maps the whole $Z$ to $\operatorname{Id}_{\mathbf{T}(u)}$, we deduce that $\rho$ induces a unitary irreducible representation of $G$ on $\mathbf{T}(u)$. Let $O_\xi\cong\PSO(2)$ be the projection of $\tilde{O}_\xi$ inside $G$. By what we have already observed, we also get
    \begin{align*}
        \rho(A)e_0=e_0 \qquad\forall\, A\in O_{\xi}. 
    \end{align*}
    This means that $e_0=u_0(q)$ is an $\PSO(2)$-invariant vector in the unit sphere of $\mathbf{T}(u)$ for the representation $\rho$, so that $\rho$ is of class one.

    \noindent
    Fix any reference point $q_0\in\mathscr{F}$ and consider the orbit 
    \begin{align*}
        \mathscr{O}:=\big\{\rho(A)u_0(q_0) \mbox{ : } A\in G\big\} \subset \mathbf{T}(u)
    \end{align*}
  Define
    \begin{align*}
        \Omega:=\big\{q\in\mathscr{F} \mbox{ : } u_0(q)\in\mathscr{O}\big\}.
    \end{align*}

        \textbf{Claim 2}. We have $\Omega = \mathscr{F}$. In other words, $u(M)$ coincides with the orbit $\mathscr{O}$.
        
    Let us justify this claim. As $q_0\in\Omega$, $\Omega$ is not empty. Moreover, by the continuity of $\rho$ and $u_0$, $\Omega$ is closed in $M$. We claim that $\Omega$ is also open which, by connectedness of $M$, implies $\Omega=M$. To prove our claim, fix any $q\in\Omega$.
    Let $V_1,V_2\in\mathfrak{X}(\mathbf{T}(u))$ be the smooth fundamental vector fields on $\mathbf{T}(u)$ given by
    \begin{align*}
        V_1(x):=\frac{d}{dt}\bigg\rvert_{t=0}\exp(t\sigma_1)x \qquad\mbox{ and }\qquad V_2(x):=\frac{d}{dt}\bigg\rvert_{t=0}\exp(t\sigma_2)x
    \end{align*}
    for every $x\in\mathbf{T}(u)$. 
    By Lemma \ref{Lemma: the differentials of the u_k}, we have 
    \begin{align}\label{Equation: differential in distribution}
    \begin{split}
        du_0(p)&=T_Xu_0(p)\,\omega_p+\overline{T_Xu_0(p)}\,\overline{\omega}_p=2\operatorname{Re}\big(T_Xu_0(p)\,\omega_p\big)\\
        &=2\operatorname{Re}\big(T_Xu_0(p)\big)\,\omega_p^1-2\operatorname{Im}\big(T_Xu_0(p)\big)\,\omega_p^2\\
        &=\Big(T_Xu_0(p)+\overline{T_Xu_0(p)}\Big)\,\omega_p^1+i\Big(T_Xu_0(p)-\overline{T_Xu_0(p)}\Big)\,\omega_p^2\\
        &=(T_X+T_Y)u_0(p)\,\omega_p^1+i(T_X-T_Y)u_0(p)\,\omega_p^2\\
        &=F_1u_0(p)\,\omega_p^1+F_2u_0(p)\,\omega_p^2\\
        &=\sqrt{|K|}(V_1\big(u_0(p)\big)\omega_p^1+V_2\big(u_0(p)\big)\omega_p^2) \qquad\forall\,p\in\mathscr{F}. 
    \end{split}
    \end{align}
    Fix a smooth vector field $W\in\mathfrak{X}(\mathscr{F})$ on $\mathscr{F}$. Let $(\Phi_t^W)_{t\in(-\delta,\delta)}$ be its local flow in an open neighborhood of $q$. Let $\big(\Psi_t\big)_{t\in(-\delta,\delta)}$ be the local flow of the smooth vector field
    \begin{align*}
        V_W:=\sqrt{|K|}(\omega^1(W)V_1+\omega^2(W)V_2)
    \end{align*}
    in an open neighborhood of $u_0(q)$. The $\delta>0$ can be chosen uniformly positive as long as $W$ is uniformly bounded in $C^1$.
    Notice that $V_W$ is tangent to the orbits of $\rho$, as it is a linear combination of $V_1$ and $V_2$. Thus, its local flow $\big(\Psi_t\big)_{t\in(-\delta,\delta)}$ preserves the orbits of $\rho$. Consider the smooth curves $\gamma_1:(-\delta,\delta)\to\mathbf{T}(u)$ and $\gamma_2:(-\delta,\delta)\to\mathbf{T}(u)$ given by
    \begin{align*}
        \gamma_1(t):=u_0\big(\Phi_t^W(q)\big) \qquad\forall\,t\in(-\delta,\delta)\\
        \gamma_2(t):=\Psi_t(u_0(q)) \qquad\forall\,t\in(-\delta,\delta)
    \end{align*}
    Note that, by \eqref{Equation: differential in distribution}, $\gamma_1$ and $\gamma_2$ both solve the Cauchy problem
    \begin{align*}
        \begin{cases}
            \sigma'(t)=V_W\big(\sigma(t)\big)\\
            \sigma(0)=u_0(q)
        \end{cases}
    \end{align*}
    in their domain. By Picard--Lindel\"of theorem, we must have 
    \begin{align*}
        u_0\big(\Phi_t^W(q)\big)=\gamma_1(t)=\gamma_2(t)=\Psi_t(u_0(q)) \qquad\forall\,t\in(-\delta,\delta).
    \end{align*}
    As $u_0(q)\in\mathscr{O}$ and $\Psi_t$ preserves the orbits of $\rho$, we have
    \begin{align*}
        u_0\big(\Phi_t^W(q)\big)=\Psi_t(u_0(q))\in\mathscr{O}\qquad\forall\,t\in(-\delta,\delta).
    \end{align*}
    By the arbitrariness of the vector field $W$ and completeness of $u_0(\mathscr{F})$, we get $u_0(\mathscr{F})=\mathscr{O}$, which is our Claim 2.

    This finishes the proof, since we saw that $\rho$ is an irreducible unitary representation of class one of $G$ on $\mathbf{T}(u)$.

\end{proof}
%

%
\begin{remark}
    We notice that the statement and proof of Theorem \ref{Theorem: classification negative curvature} in the case $K>0$ correspond to \cite[Theorem 1.6]{Bryant}.
    In the flat case $K=0$, some of the results we use not longer hold (for instance Lemma \ref{Lemma: orthogonal basis for the osculating space at each point} fails to be true). Nevertheless,  what we wrote is probably enough to show that isometric minimal immersions of $\r^2$ into $\s_H$ correspond to orbits of unitary representations of the abelian group $\r^2$ into $H$. This is what \cite{Kenmotsu} showed in finite dimensions. 
\end{remark}
\section{Minimality of 2-dimensional orbits}\label{sec: minimality of special orbits} 
In \cite{Calabi, Bryant}, a step is to check that the 2-dimensional orbits of the irreducible representation of $\operatorname{SO}(3)$ into $\operatorname{SO}(2m - 1)$ are indeed minimal spheres inside $\mathbb{S}^{2m-2}$, called Boruvka sphere.
In a similar vein, the goal of this section is to prove that the 2-dimensional orbits of any irreducible unitary representation of $\PSL(2, \mathbb{R})$ of class one are minimal surfaces. Below, $H$ is a complex Hilbert space and $\mathbb{S}_H$ denotes its unit sphere. Recall that the orbit $\mathscr{O}_{s,v}$ is defined in (\ref{orbit}). Let $\rho_s:\PSL(2, \mathbb{R})\to H$ be the irreducible unitary representation of $\PSL(2, \mathbb{R})$ given by Theorem \ref{thm: irreducible representations}.

\begin{theorem}\label{thm: special orbit is minimal}
    Let $s\in i\r\cup(-\frac{1}{2},\frac{1}{2})$. Then, the 2-dimensional orbits of $\rho_s$ are exactly those of the form $\mathscr{O}_{s,v}$ where $v$ is a $\operatorname{PSO}(2)$-invariant vector. Moreover,   for every $\operatorname{PSO}(2)$-invariant unit vector $v\in\s_H$, the orbit $\mathscr{O}_{s,v}$ is a minimal surface in ${\s_H}$ of constant Gaussian curvature 
    $$K_s=-\frac{8}{1-4s^2}.$$
\end{theorem}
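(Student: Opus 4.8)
The plan is to exploit the homogeneity of the orbit under $\PSL(2,\mathbb{R})$ acting by isometries of $\s_H$, so that the induced metric, second fundamental form and mean curvature may all be computed at the single point $v$. Write $F_i := d\rho_s(\sigma_i)$ for the (unbounded, skew-adjoint) infinitesimal generators attached to the basis $\{\sigma_1,\sigma_2,\sigma_3\}$ of $\mathfrak{sl}(2,\mathbb{R})$ from Lemma \ref{Lemma: a useful basis of sl(2,R)}, so that $[\sigma_1,\sigma_2]=\sigma_3$, $[\sigma_2,\sigma_3]=-\sigma_1$, $[\sigma_3,\sigma_1]=-\sigma_2$ and $\sigma_3$ generates $\PSO(2)$. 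Since $v$ is $\PSO(2)$-invariant we have $F_3v=0$, and since $v$ is $\PSO(2)$-finite it is an analytic vector, so every manipulation with the $F_i$ below takes place on the dense domain of analytic vectors. The orbit map $A\mapsto\rho_s(A)v$ descends to $u\colon\mathbb{H}^2=\PSL(2,\mathbb{R})/\PSO(2)\to\s_H$ whose differential at the base point sends the two noncompact directions to $F_1v$ and $F_2v$. The isotropy group $\PSO(2)=\{\exp(t\sigma_3)\}$ fixes $v$ and rotates the tangent plane $\operatorname{span}_{\mathbb{R}}\{F_1v,F_2v\}$, since $F_3(F_1v)=-F_2v$ and $F_3(F_2v)=F_1v$ (using $F_3v=0$ and the brackets). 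As $\rho_s(\exp(t\sigma_3))$ is an isometry of $\s_H$ fixing $v$, the form $\mathrm{Re}\langle\cdot,\cdot\rangle$ on this plane is rotation-invariant, hence $\mathrm{Re}\langle F_iv,F_jv\rangle=c\,\delta_{ij}$ with $c:=\|F_1v\|^2=\|F_2v\|^2>0$; thus $u$ is a conformal immersion with $u^*g_H=c\,g_0$, where $g_0$ is the invariant metric on $\mathbb{H}^2$ making $\{\sigma_1,\sigma_2\}$ orthonormal. Comparing the Maurer--Cartan equations with \eqref{Equation: 9}--\eqref{Equation: 10} shows $g_0$ has curvature $-1$.

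For the identification of the $2$-dimensional orbits, I would first note that $\operatorname{Stab}(v)$ is a closed subgroup containing $\PSO(2)$; the only closed subgroups of $\PSL(2,\mathbb{R})$ containing $\PSO(2)$ are $\PSO(2)$ itself and the whole group, and the latter is excluded by the irreducibility and nontriviality of $\rho_s$. Hence $\operatorname{Stab}(v)=\PSO(2)$ and $\dim\mathscr{O}_{s,v}=2$. Conversely, a $2$-dimensional orbit through some $w$ has a $1$-dimensional stabilizer, whose identity component is a one-parameter group $\{\exp(t\eta)\}$, and a fixed vector forces $w\in\ker d\rho_s(\eta)$. Invoking the explicit models of Theorem \ref{thm: irreducible representations}, the skew-adjoint operator $d\rho_s(\eta)$ has purely discrete spectrum when $\eta$ is elliptic (the $\PSO(2)$-weights) and purely continuous spectrum when $\eta$ is parabolic or hyperbolic; only in the elliptic case can there be a kernel. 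Thus $\operatorname{Stab}(w)$ is conjugate to $\PSO(2)$, and writing $\operatorname{Stab}(w)=g\,\PSO(2)\,g^{-1}$ shows $\rho_s(g^{-1})w$ is $\PSO(2)$-invariant, so $\mathscr{O}_{s,w}=\mathscr{O}_{s,v}$ for that vector.

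Minimality and the curvature then come from the Casimir element. On the irreducible $\rho_s$ the operator $C:=\tfrac12(F_1^2+F_2^2-F_3^2)$ acts as a real scalar $\mu_s$; pairing $Cv=\mu_s v$ with $v$ and using $F_3v=0$ and skew-adjointness gives $\mu_s=-c$. Since $\mathbb{H}^2$ is a symmetric space, the connection terms vanish at the base point, so $\Delta_{g_0}u=(F_1^2+F_2^2)v=(2C+F_3^2)v=2\mu_s\,v$; rescaling to the induced metric yields $\Delta_{c\,g_0}u=\tfrac1c(2\mu_s)v=-2u$, which is exactly Takahashi's equation certifying that $u\colon(\mathbb{H}^2,c\,g_0)\to\s_H$ is a minimal isometric immersion. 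Its Gaussian curvature equals that of $c\,g_0$, namely $-1/c=1/\mu_s$. The final ingredient is to evaluate $\mu_s$ in the normalization of Theorem \ref{thm: irreducible representations}, where one finds $\mu_s=-\tfrac{1-4s^2}{8}$, whence $c=\tfrac{1-4s^2}{8}$ and $K_s=-1/c=-\tfrac{8}{1-4s^2}$.

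I expect the genuine obstacles to be functional-analytic rather than algebraic. The first is ruling out parabolic and hyperbolic stabilizers: this truly requires the spectral dichotomy (continuous versus discrete) of the noncompact generators in the chosen realization of $\rho_s$, and cannot be seen from the abstract brackets alone. The second is legitimizing the unbounded-operator identities --- that the Casimir acts as a scalar and that $F_1^2+F_2^2=2C+F_3^2$ may be applied to $v$ --- on the dense domain of $\PSO(2)$-finite analytic vectors, which is precisely what licenses reducing the whole computation to the single point $v$.
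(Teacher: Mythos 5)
Your argument is correct and reaches the same conclusion by a route that is parallel to, but packaged differently from, the paper's. The common skeleton is: homogeneity forces $u^*g_H=c\,g_0$; an eigenvalue equation $\Delta_{g_0}u=-\lambda u$ holds; Takahashi's criterion then yields minimality and pins down $c$, hence the curvature. Where the paper derives the eigenvalue equation by quoting the theory of spherical functions (the matrix coefficient $\langle\rho_s(A)v,v\rangle$ is the radial eigenfunction $\varphi_s$ with eigenvalue $\lambda_s=\tfrac14-s^2$, and equivariance propagates this to every coordinate), you derive it infinitesimally via the Casimir: $\Delta_{g_0}u=(F_1^2+F_2^2)v=2Cv=2\mu_s v$ with $\mu_s=-c$ from skew-adjointness. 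The two are essentially the same computation in different clothing, and indeed your final step --- ``one finds $\mu_s=-\tfrac{1-4s^2}{8}$'' --- is not free: since Theorem \ref{thm: irreducible representations} parametrizes $\rho_s$ precisely by the Laplace eigenvalue of the spherical function, the cleanest way to evaluate $\mu_s$ is to apply your own identity $\Delta_{g_0}\varphi_s(0)=\langle(F_1^2+F_2^2)v,v\rangle=2\mu_s$ and read off $2\mu_s=-\lambda_s$, i.e.\ you end up invoking the spherical function anyway. Your treatment of the classification of $2$-dimensional orbits is more detailed than the paper's, which disposes of it by ``inspecting the definitions of $\rho_s$'': your stabilizer argument ($\PSO(2)$ is maximal among proper closed subgroups containing it; conversely a one-dimensional stabilizer must be elliptic because hyperbolic and parabolic generators have purely continuous spectrum in the principal and complementary series, hence no kernel) is a legitimate and genuinely informative way to fill in that step, though the spectral dichotomy does require the explicit models and is a nontrivial input, as you acknowledge. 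The only minor points to tighten are the domain issues you already flag (all identities hold on the dense space of $\PSO(2)$-finite analytic vectors, and $v$ is such a vector) and the observation that $c>0$: if $F_1v=0$ then the rotation $F_3(F_1v)=-F_2v$ forces $F_2v=0$ as well, so $v$ would be fixed by all of $\PSL(2,\r)$, contradicting irreducibility.
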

\begin{proof}
    Recall that 
    \begin{align*}
        \mathbb{H}^2=\PSL(2,\mathbb{R})/\PSO(2)
    \end{align*}
    and that the hyperbolic metric $g_0$ on $\mathbb{H}^2$ is, up to a positive, constant multiplicative factor, the unique $\PSL(2,\mathbb{R})$-invariant metric on $\mathbb{H}^2$. Inspecting the definitions of $\rho_s$ \cite{Lang} \cite[Chapter 5]{Lubotzky}, one checks that an orbit of $\rho_s$ is 2-dimensional if and only if it is equal to $\mathscr{O}_{s,v}$, where $v$ is a $\operatorname{PSO}(2)$-invariant vector.

    Now fix a $\operatorname{PSO}(2)$-invariant vector $v$ of unit norm.
    Consider the map $u:\mathbb{H}^2\to{\s_H}$ given by
    \begin{align*}
        u([A]):=\rho_s(A)v \qquad\forall\, A\in\PSL(2,\mathbb{R}).
    \end{align*}
    By $\PSO(2)$-invariance of $v$ and by  smoothness of $\rho_s$, $u$ is a well-defined smooth map on $\mathbb{H}^2$. Moreover,
    \begin{align*}
        u(\mathbb{H}^2)=\mathscr{O}_{s,v}
    \end{align*}
    and we have the following natural equivariance property:
    \begin{align}\label{homog}
        u(A\cdot[B])=\rho_s(A)u([B]) \qquad\forall\,A,B\in\PSL(2,\mathbb{R}),
    \end{align}
    i.e. $u$ is $\PSL(2,\mathbb{R})$-equivariant. 

    \noindent
    By \cite[Proposition 5.1.6]{Lubotzky}, the map $\varphi:\PSL(2,\r)\to\c$ given by
    \begin{align*}
        \varphi(A):=\langle\rho_s(A)v,v\rangle \qquad\forall\,A\in\PSL(2,\r),
    \end{align*}
    is a spherical function associated with $\rho_s$. As such, via the discussion in \cite[Section  5.2, pages 65-66]{Lubotzky}, $\varphi$ is a solution on $\mathbb{H}^2$ of
    \begin{align}\label{Equation: laplacian on varphi}
        \Delta_{g_0}\varphi=-\lambda_s\varphi,
    \end{align}
    where $\lambda_s$ is defined as in Theorem \ref{thm: irreducible representations} as
    \begin{align*}
        \lambda_s:=\frac{1}{4}-s^2.
    \end{align*}
    As
    \begin{align*}
        \varphi(A)=\langle \rho_s(A)v,v\rangle=\langle u([A]),v\rangle \qquad\forall\,[A]\in\mathbb{H}^2,
    \end{align*}
    this discussion implies that the ``coordinate function'' $\langle u([A]),v\rangle$ in the direction of $v$ is an eigenfunction of the Laplacian on $\mathbb{H}^2$ with eigenvalue $\lambda_s=\frac{1}{4}-s^2$. 
    By equivariance (\ref{homog}), for any $v'\in u(\mathbb{H}^2)$, the coordinate function
    $\langle u([A]),v'\rangle$
    satisfies the same property. In other words,
    \begin{equation} \label{vector laplace}
        \Delta_{g_0} u = -\lambda_s u.
    \end{equation}
    Let $g_H$ be the standard round metric induced by $H$ on ${\s_H}$. Notice that by (\ref{homog}), the pull-back metric $u^*g_H$ is $\PSL(2,\mathbb{R})$-invariant on $\mathbb{H}^2$, which implies that 
    \begin{align*}
        u^*g_H=c_sg_0
    \end{align*}
    for some $c_s>0$. Thus $u:(\mathbb{H}^2,u^*g_H)\to({\s_H},g_H)$ is an isometric immersion such that 
    \begin{align*}
        \Delta_{u^*g_H}u=\Delta_{c_sg_0}u=-\frac{\lambda_s}{c_s}u.
    \end{align*}
  This implies that $u$ must be minimal and 
    \begin{align*}
        \frac{\lambda_s}{c_s} = 2,\quad c_s=\frac{\lambda_s}{2}=\frac{1}{8}-\frac{s^2}{2}.
    \end{align*}
    Since $K_{c_sg_0}=c_s^{-1}K_{g_0}=-c_s^{-1}$, we have
    \begin{align*}
        K_{u^*g_H}=K_{c_sg_0}=-\frac{1}{c_s}=-\frac{8}{1-4s^2}<0.
    \end{align*}
    The statement follows. 
\end{proof}



%
\vspace{2em}
\appendix
\section{Notions from functional analysis} \label{Notions from functional analysis}
Given a separable complex inner product space $X$, we denote by $\mathcal{L}(X)$ the space of the linear operators on $X$ and by $\mathcal{B}(X)$ the space of the bounded linear operators on $X$. For the sake of completeness, we recall here some useful standard definitions and lemmas in linear functional analysis. Our references are \cite[Chapter VIII]{ReedSimon} \cite{Berezansky68} \cite{MassonRepka}.
\begin{definition}[Closable and closed operators]
    Let $X$ be a separable complex Hilbert space. Let $D\subset H$ be a linear subspace of $X$. An operator $T\in\mathcal{L}(D)$ is said to be \textit{closed} if its graph, given by
    \begin{align*}
        \operatorname{graph}(T):=\{(x,Tx) \mbox{ : } x\in D\}\subset X\times X
    \end{align*}
    is a closed subset of $X\times X$. 

    \noindent
    $T$ is said to be \textit{closable} if it admits a closed extension, i.e. if there exist a linear subspace $\tilde D$ with $D\subset \tilde D\subset X$ and a closed operator $\tilde T\in\mathcal{L}(\tilde D)$ such that $\tilde T=T$ on $D$.

    \noindent
    Every closable operator $T$ has a smallest closed extension, which we denote by $\overline{T}$. 
\end{definition}
\begin{definition}[Self-adjoint and skew-adjoint operators]\label{Definition: self and skew-adjoint operators}
    Let $X$ be a separable complex Hilbert space with inner product $(.,.)$. Let $D\subset H$ be a dense linear subspace of $X$. An operator $T\in\mathcal{L}(D)$ is called \textit{symmetric} if
    \begin{align*}
        ( Tx,y)=( x,Ty) \qquad\forall\,x,y\in D. 
    \end{align*}
    Analogously, $T$ is called \textit{skew-symmetric} if
    \begin{align*}
        ( Tx,y)=-( x,Ty) \qquad\forall\,x,y\in D. 
    \end{align*}

    \noindent
    A symmetric operator $T$ is called \textit{self-adjoint} if for every $y\in X\smallsetminus D$ there is no $z\in X$ such that 
    \begin{align*}
        ( Tx,y)=( x,z) \qquad\forall\,x\in D. 
    \end{align*}
    Analogously, a skew-symmetric operator $T$ is called \textit{skew-adjoint} if for every $y\in X\smallsetminus D$ there is no $z\in X$ such that 
    \begin{align*}
        ( Tx,y)=-( x,z) \qquad\forall\,x\in D. 
    \end{align*}
    
    \noindent
    A symmetric operator is called \textit{essentially self-adjoint} if its closure is self-adjoint. Analogously, a skew-symmetric operator $T$ is called \textit{essentially skew-adjoint} if its closure is skew-adjoint (the definition makes sense, since symmetric and skew-symmetric operators are always closable \cite[page 255]{ReedSimon}.) Note that $T$ is essentially skew-adjoint if and only if $iT$ is essentially self-adjoint.
\end{definition}

\noindent
For a separable complex Hilbert space $X$, we let
\begin{align*}
    \mathfrak{u}(X)&:=\{T \mbox{ is a skew-adjoint operator on } X\}\\
    \operatorname{U}(X)&:=\{T\in\mathcal{B}(X) \mbox{ : } T^*T=TT^*=\operatorname{Id}_X\}.
\end{align*}
\begin{remark}\label{Remark: exponential of skew-adjoint operators}
    By \cite[Theorem VIII.5]{ReedSimon}, for every $T\in\mathfrak{u}(H)$ the operator $\exp(T)$ is well-defined and belongs to $\operatorname{U}(H)$.
\end{remark}
\noindent
The following definition can be found \cite[page 503]{Berezansky68}.
\begin{definition}[Jacobi matrix operator]\label{Definition: Jacobi matrix operator}
    We say that $J$ is a Jacobi matrix operator if it is a symmetric linear operator represented by an infinite tridiagonal matrix such that all diagonal terms $\{a_k\}_{k \in \mathbb{Z}}$ are real numbers and all off-diagonal terms $\{b_k\}_{k \in \mathbb{Z}}$ are positive real numbers.
\end{definition}
The following criterion for essential self-adjointness is well-known (see \cite[Corollary 2.2]{MassonRepka}). 
%
\begin{lemma}[Carleman's Test]\label{Lemma: Carleman's test}
    A Jacobi matrix operator $J$ is essentially self-adjoint if
    \begin{align*}
        \sum_{k\ge 0}\frac{1}{b_k}=\infty \qquad\mbox{ and } \qquad\sum_{k\le 0}\frac{1}{b_k}=\infty.
    \end{align*}
\end{lemma}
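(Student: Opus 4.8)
The plan is to establish essential self-adjointness through the deficiency-index criterion, letting the Carleman divergence enter via a discrete Wronskian estimate. By the corollary of \cite[Theorem VIII.3]{ReedSimon}, since $J$ is symmetric on the dense domain $D$ of finitely supported sequences in $\ell^2(\mathbb{Z})$ (symmetry follows from $a_k,b_k\in\mathbb{R}$), it suffices to show that the two deficiency equations $J^*u=\pm i\,u$ admit only the trivial solution in $\ell^2(\mathbb{Z})$. Here $J^*$ is the maximal operator acting as the formal tridiagonal difference operator, so a deficiency vector is precisely a sequence $u\in\ell^2(\mathbb{Z})$ satisfying the three-term recurrence
\begin{equation}\label{Equation: three term recurrence Carleman}
    b_{k-1}u_{k-1}+a_ku_k+b_ku_{k+1}=z\,u_k \qquad\forall\,k\in\mathbb{Z},
\end{equation}
with $z=\pm i$. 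I will carry out the argument for $z=i$; the case $z=-i$ is identical up to a sign.

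The next step is to introduce the discrete Wronskian $W_k:=b_k\big(u_k\overline{u_{k+1}}-u_{k+1}\overline{u_k}\big)$, which is purely imaginary, so I write $W_k=i\omega_k$ with $\omega_k\in\mathbb{R}$. Using \eqref{Equation: three term recurrence Carleman} for $u$ together with its conjugate (which solves the same recurrence with parameter $\overline z$, since the coefficients are real), a direct telescoping computation yields $W_k-W_{k-1}=(\overline z-z)\,|u_k|^2$. For $z=i$ this reads $\omega_{k-1}-\omega_k=2|u_k|^2\ge 0$, so that $\omega_k$ is non-increasing in $k$. Assuming now $u\in\ell^2(\mathbb{Z})$, the increments of $\omega_k$ are summable, so the one-sided limits $\omega_{+\infty}:=\lim_{k\to+\infty}\omega_k$ and $\omega_{-\infty}:=\lim_{k\to-\infty}\omega_k$ exist and are finite, with $\omega_{-\infty}-\omega_{+\infty}=2\sum_{k\in\mathbb{Z}}|u_k|^2$. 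It therefore remains to prove $\omega_{+\infty}=\omega_{-\infty}=0$, which forces $u\equiv 0$.

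This last step is where the Carleman hypothesis is used and is the crux of the argument. By Cauchy--Schwarz, $|\omega_k|=|W_k|\le 2b_k\,|u_k|\,|u_{k+1}|$, hence $|u_k|^2+|u_{k+1}|^2\ge 2|u_k||u_{k+1}|\ge |\omega_k|/b_k$. Suppose $\omega_{+\infty}\neq 0$. Since $\omega_k$ is monotone, $|\omega_k|\ge |\omega_{+\infty}|/2>0$ for all sufficiently large $k$; summing the previous inequality over such $k$ gives
\begin{equation*}
    2\sum_{k\ge 0}|u_k|^2\ \ge\ \frac{|\omega_{+\infty}|}{2}\sum_{k\ge 0}\frac{1}{b_k}\ =\ \infty
\end{equation*}
by hypothesis, contradicting $u\in\ell^2(\mathbb{Z})$. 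Thus $\omega_{+\infty}=0$, and the symmetric argument over $k\le 0$, using $\sum_{k\le 0}1/b_k=\infty$, gives $\omega_{-\infty}=0$. Consequently $\sum_k|u_k|^2=\tfrac12(\omega_{-\infty}-\omega_{+\infty})=0$, so $u\equiv 0$. Since both deficiency equations have only the trivial $\ell^2$-solution, $J$ is essentially self-adjoint.

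I expect the main obstacle to be the bookkeeping of the two directions: one must pair $\omega_{+\infty}$ with the divergence of $\sum_{k\ge 0}1/b_k$ and $\omega_{-\infty}$ with that of $\sum_{k\le 0}1/b_k$, and one must verify carefully that the monotonicity of $\omega_k$ is exactly what converts the hypothesis $\omega_{\pm\infty}\neq 0$ into a uniform positive lower bound on the corresponding tail, which is what makes the Carleman sum produce the desired contradiction. The Wronskian identity itself is a routine computation from \eqref{Equation: three term recurrence Carleman}, and no further input beyond the deficiency criterion is needed.
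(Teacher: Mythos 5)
Your proof is correct, but it is worth noting that the paper does not actually prove this lemma at all: it is quoted as a known result, with the proof deferred to the reference [Masson--Repka, Corollary 2.2]. So your argument is genuinely different in the sense that it is a complete, self-contained derivation. It is also the ``right'' classical argument: you reduce essential self-adjointness to the triviality of the deficiency spaces $\ker(J^*\mp i)$ (the corollary of \cite[Theorem VIII.3]{ReedSimon} that the paper itself invokes elsewhere), identify $J^*$ with the maximal operator so that deficiency vectors solve the three-term recurrence on all of $\mathbb{Z}$, and then run the discrete Lagrange/Wronskian identity $W_k-W_{k-1}=(\overline z - z)\lvert u_k\rvert^2$. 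I checked the computation: the identity is correct, $\omega_k$ is real and monotone, $u\in\ell^2$ makes the increments summable so both tail limits $\omega_{\pm\infty}$ exist and are finite with $\omega_{-\infty}-\omega_{+\infty}=2\lVert u\rVert^2$, and the bound $\lvert u_k\rvert^2+\lvert u_{k+1}\rvert^2\ge \lvert\omega_k\rvert/b_k$ combined with the divergence of each one-sided sum forces $\omega_{+\infty}=\omega_{-\infty}=0$, hence $u\equiv 0$. The structure correctly uses \emph{both} hypotheses, one per tail, which is exactly why the bilateral Carleman test needs divergence on both sides. The only cosmetic imprecision is writing $\sum_{k\ge 0}$ in the final display where the estimate $\lvert\omega_k\rvert\ge\lvert\omega_{+\infty}\rvert/2$ is only guaranteed on a tail $k\ge K$; since a tail of a divergent positive series still diverges, and your prose says ``summing over such $k$'', this is harmless. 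What the paper's citation buys is brevity; what your proof buys is self-containedness and a transparent view of where the hypotheses enter.
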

%

%
\section{The isometry groups of simply connected, 2-dimensional space forms}
For the reader's convenience, we collect in this section useful information about the isometry groups of the round 2-sphere and the hyperbolic 2-plane. Although we do not need the case of the sphere, it may be helpful to compare its properties with those of the hyperbolic plane. In particular, we recall the following:
\begin{enumerate}
    \item $\operatorname{SO}(3)$ is the isometry group of the round 2-sphere $\mathbb{S}^2$, 
    \item $\PSL(2,\mathbb{R})$ is the isometry group of the hyperbolic plane $\mathbb{H}^2$.
\end{enumerate}
Their Lie algebras are respectively given by 
\begin{enumerate}
    \item $\mathfrak{so}(3)=\{A\in\operatorname{Mat}_3(\mathbb{R}) \mbox{ : } A^{T}=-A\}$, 
    \item $\mathfrak{sl}(2,\mathbb{R})=\{A\in\operatorname{Mat}_2(\mathbb{R}) \mbox{ : } \operatorname{tr}(A)=0\}$.
\end{enumerate}
Next, we record the following classification results for irreducible representations of the Lie groups above on finite dimensional and infinite dimensional Hilbert spaces.
\begin{proposition}[The irreducible unitary representations of $\operatorname{SO}(3)$]
    Let $H$ be a real separable Hilbert space. Then, the following facts hold.
    \begin{enumerate}
        \item If $H$ is finite dimensional and $\dim H$ is even or if $H$ is infinite dimensional, there is no irreducible unitary representation of $\operatorname{SO}(3)$ on $H$.
        \item If $H$ is finite dimensional and $\dim H=n$ is odd, there is a unique (up to strong equivalence) unitary irreducible representation $\rho_n$ of $\operatorname{SO}(3)$ on $H$. If $H_{\ell}$ denotes the Hilbert space of the spherical harmonics on the unit round sphere $\,\mathbb{S}^2$ of degree $\ell=\frac{n-1}{2}$, we have $H\cong H_{\ell}$, and the representation $\rho_n$ is given by 
        \begin{align*}
            \rho_n(A):=\big\{H_{\ell}\ni\psi\mapsto\psi(A^{-1}\,\cdot\,)\in H_{\ell}\big\}\in\operatorname{GL}(H_{\ell}) \qquad\forall\,A\in\operatorname{SO}(3).
        \end{align*}
    \end{enumerate}
\end{proposition}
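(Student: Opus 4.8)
The plan is to combine the compactness of $\operatorname{SO}(3)$ with the classical highest-weight classification of its complex irreducible representations, and then to pass from complex to real representations by a representation-type argument. First I would dispose of the infinite-dimensional case in part~(1): since $\operatorname{SO}(3)$ is a compact Lie group, the Peter--Weyl theorem guarantees that every irreducible unitary representation is finite-dimensional, so no infinite-dimensional irreducible unitary representation of $\operatorname{SO}(3)$ exists. This reduces the whole statement to the finite-dimensional setting, where ``unitary'' for a real Hilbert space means orthogonal.

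Next I would recall the classification of the \emph{complex} irreducible representations of $\operatorname{SO}(3)$. Passing to the complexified Lie algebra $\mathfrak{so}(3)\otimes_{\mathbb{R}}\mathbb{C}\cong\mathfrak{sl}(2,\mathbb{C})$ and invoking highest-weight theory, the irreducible complex representations of the simply connected group $\operatorname{SU}(2)$ are indexed by a nonnegative integer (twice the spin), and such a representation descends to the quotient $\operatorname{SO}(3)=\operatorname{SU}(2)/\{\pm\operatorname{Id}\}$ precisely when this integer is even. The surviving representations are exactly the spaces $V_\ell$ of complex spherical harmonics of degree $\ell\ge 0$, with $\dim_{\mathbb{C}}V_\ell=2\ell+1$; equivalently, $\operatorname{SO}(3)$ acts on $L^2(\mathbb{S}^2)$ by precomposition with the inverse, and $L^2(\mathbb{S}^2)=\bigoplus_{\ell\ge 0}V_\ell$ is its isotypic decomposition into pairwise inequivalent irreducibles.

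The crux is then the transition from complex to real irreducible representations, which is where the parity of the dimension enters. Given a real irreducible orthogonal representation $H$, its complexification $H_{\mathbb{C}}$ is either complex-irreducible or splits as $U\oplus\overline U$ for a complex irreducible $U$; dually, each complex irreducible is of real, complex, or quaternionic type, and only the real type produces a real irreducible of equal dimension. I would observe that every $V_\ell$ has \emph{real-valued} character, hence is self-conjugate, which already rules out the complex type; and since $\dim_{\mathbb{C}}V_\ell=2\ell+1$ is odd while any quaternionic-type representation carries an $\mathbb{H}$-module structure and therefore has even complex dimension, $V_\ell$ cannot be quaternionic. Thus each $V_\ell$ is of real type, so it is the complexification of a unique real irreducible representation of the same real dimension $2\ell+1$, realized concretely on the real spherical harmonics of degree $\ell$. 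This produces a bijection between real irreducible representations of $\operatorname{SO}(3)$ and the integers $\ell\ge 0$, under which the real dimension is $2\ell+1$.

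Assembling these facts completes both parts: the real irreducible representations occur exactly in the odd dimensions $1,3,5,\dots$, which proves the even-dimensional (and, together with the first paragraph, the infinite-dimensional) non-existence in part~(1); and in each odd dimension $n=2\ell+1$ there is a unique one up to strong equivalence, namely the action $\rho_n$ on degree-$\ell$ spherical harmonics described in part~(2). The main obstacle I anticipate is not the highest-weight classification itself but the careful bookkeeping of the real-versus-complex correspondence: one must verify that every $V_\ell$ is genuinely of real type (not merely self-conjugate) so that real and complex irreducibles match up with equal dimensions, and the clean way to secure this is precisely the odd-dimension argument excluding the quaternionic case.
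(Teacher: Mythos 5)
Your argument is correct. For comparison: the paper does not actually prove this proposition --- its entire ``proof'' is a citation to two results in Hall's textbook on Lie groups (the highest-weight classification of the irreducible representations of $\mathfrak{su}(2)\otimes\mathbb{C}\cong\mathfrak{sl}(2,\mathbb{C})$ and its consequence for $\operatorname{SO}(3)$). What you have written is essentially a self-contained reconstruction of the content behind that citation: Peter--Weyl to kill the infinite-dimensional case, the descent of the even-highest-weight $\operatorname{SU}(2)$-representations to $\operatorname{SO}(3)$ realized on spherical harmonics, and then the real/complex/quaternionic type analysis to pass from complex irreducibles to the real orthogonal ones. The last step is the one piece of genuine content you add beyond what a casual reading of the complex classification gives, and you handle it correctly: the character of $V_\ell$ is real so the complex type is excluded, and the odd complex dimension $2\ell+1$ excludes the quaternionic type, so every $V_\ell$ is of real type and is the complexification of a unique real irreducible of the same dimension. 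Two very minor points you could make explicit if you wanted a fully airtight write-up: (a) that a real irreducible whose complexification is reducible would have to split as $U\oplus\overline{U}$ with $U$ of complex or quaternionic type, which cannot happen here since all $V_\ell$ are of real type, so no even-dimensional real irreducibles arise that way; and (b) that equivalence of two orthogonal representations as abstract real representations upgrades to strong equivalence (conjugation by a linear isometry) by the usual averaging/polar-decomposition argument for compact groups. Neither is a gap of substance.
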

\begin{proof}
    The statement is a direct consequence of \cite[Theorem 4.32]{Hall} and \cite[Proposition 4.35]{Hall}.
\end{proof}
\begin{theorem}[The irreducible unitary representations of $\operatorname{PSL}(2,\mathbb{R})$]\label{thm: irreducible representations} 
    Let $H$ be a separable Hilbert space. Then, the following facts hold.
    \begin{enumerate}
        \item If $H$ is finite dimensional, there is no nontrivial irreducible unitary representation of $\operatorname{PSL}(2,\mathbb{R})$ on $H$.
        \item If $H$ is infinite dimensional, the irreducible unitary representations of $\operatorname{PSL}(2,\mathbb{R})$ on $H$ having a $\PSO(2)$-invariant unit vector---which we call \textit{of class one}---are given by the following parametrizations.
        \begin{enumerate}[(a)]
            \item The principal series $\{\rho_s\}_{s\in i\mathbb{R}}$, where each $\rho_s$ corresponds to the unique radial eigenfunction $\varphi_{s}$ of the Laplace--Beltrami operator on the Poincar\'e disk $\mathbb{D}$ with eigenvalue 
            \begin{align}
                \lambda_s:=\frac{1}{4}-s^2\ge\frac{1}{4}
            \end{align}
            such that $\varphi_s(0)=1$.
            \item The \textit{complementary series} $\{\rho_s\}_{s\in (-\frac{1}{2},\frac{1}{2})}$, where each $\rho_s$ corresponds to the unique radial eigenfunction $\varphi_{s}$ of the Laplace--Beltrami operator on the Poincar\'e disk $\mathbb{D}$ with eigenvalue 
            \begin{align}
                0<\lambda_s:=\frac{1}{4}-s^2\le\frac{1}{4}
            \end{align}
            such that $\varphi_s(0)=1$.
        \end{enumerate}
    \end{enumerate}
\end{theorem}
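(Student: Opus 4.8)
The plan is to treat the finite- and infinite-dimensional cases by genuinely different methods, and in the infinite-dimensional case to reduce the classification to the analysis of spherical functions for the Gelfand pair $(\PSL(2,\mathbb{R}),\PSO(2))$.

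For part (1), I would show directly that $\PSL(2,\mathbb{R})$ admits no nontrivial finite-dimensional unitary representation. Such a representation is a continuous homomorphism into a compact group $\mathrm{U}(n)$; since $\mathfrak{sl}(2,\mathbb{R})$ is simple and $\PSL(2,\mathbb{R})$ has trivial center, the kernel is either all of $\PSL(2,\mathbb{R})$ or trivial, and in the latter case the image would be a noncompact subgroup of the compact group $\mathrm{U}(n)$, which is absurd. Concretely, decomposing into irreducible summands $\Sym^m(\mathbb{C}^2)$, the hyperbolic one-parameter subgroup $\mathrm{diag}(e^t,e^{-t})$ acts with eigenvalues $e^{mt},e^{(m-2)t},\dots,e^{-mt}$, which are not all unimodular unless $m=0$; unitarity therefore forces the representation to be trivial.

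For part (2), I would first record that $(\PSL(2,\mathbb{R}),\PSO(2))$ is a Gelfand pair: the convolution algebra of $\PSO(2)$-bi-invariant functions is commutative, since the transpose (Cartan) involution fixes every $\PSO(2)$-double coset. Consequently any irreducible unitary representation $\rho$ possessing a nonzero $\PSO(2)$-fixed vector has a one-dimensional space of such vectors, spanned by a unit vector $v$, and the matrix coefficient $\varphi(g):=\langle\rho(g)v,v\rangle$ is a positive-definite spherical function. Transporting to $\mathbb{H}^2=\PSL(2,\mathbb{R})/\PSO(2)\cong\mathbb{D}$, the function $\varphi$ becomes a radial, positive-definite joint eigenfunction of the algebra of invariant differential operators. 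Because $\mathbb{H}^2$ is a rank-one symmetric space, this algebra is generated by the Laplace--Beltrami operator alone, so $\Delta\varphi=-\lambda\varphi$ for a single scalar $\lambda$. Writing $\lambda=\tfrac14-s^2$ and passing to geodesic polar coordinates reduces the eigenvalue equation to the radial ODE $\varphi''+\coth(r)\varphi'=-\lambda\varphi$, which has a unique solution regular at the origin with $\varphi(0)=1$ (a Legendre/hypergeometric function); this produces the family $\varphi_s$, with the symmetry $\varphi_s=\varphi_{-s}$. Conversely, the GNS construction recovers an irreducible unitary representation from each positive-definite $\varphi_s$, and these exhaust the class-one representations; the principal and complementary series are then realized concretely, the former by unitary induction from a unitary character of the Borel subgroup on $L^2(\partial\mathbb{D})$ and the latter by a deformation carrying an invariant inner product.

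The main obstacle is the \emph{sharp unitarizability range}: deciding precisely for which $\lambda$ the spherical function $\varphi_s$ is positive-definite. The principal series ($s\in i\mathbb{R}$, $\lambda\ge\tfrac14$) is immediate from its $L^2$ model, so the delicate point is the complementary series --- one must produce a positive-definite $\PSL(2,\mathbb{R})$-invariant inner product exactly for real $s\in(-\tfrac12,\tfrac12)$ and show positivity fails at $|s|=\tfrac12$ and beyond. I would establish this either by checking positivity of the intertwining kernel (of boundary type $|z-w|^{-(1-2s)}$) in the induced picture, which holds precisely on $0<s<\tfrac12$, or --- in the spirit of this paper --- by the infinitesimal method: decompose $H=\bigoplus_{n\in 2\mathbb{Z}}H_n$ into $\PSO(2)$-weight spaces, show each is at most one-dimensional, and compute the norms of the iterated raising operators applied to $v$; unitarity forces these norms to be positive, which pins the Casimir eigenvalue to exactly $s\in i\mathbb{R}\cup(-\tfrac12,\tfrac12)$. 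This last computation is structurally identical to the recursion governed by the constants $c_p$ in the proof of Theorem \ref{Theorem: classification negative curvature}.
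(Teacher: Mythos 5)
Your outline is correct, and it reconstructs essentially the argument behind the references the paper itself defers to: the paper offers no proof of this theorem, citing only Lang, Knapp, and Lubotzky, whose treatments follow exactly your route (Gelfand pair, spherical functions as radial Laplace eigenfunctions, GNS, and the positive-definiteness analysis pinning down the principal and complementary series). The only points worth tightening are that a noncompact subgroup of $\mathrm{U}(n)$ is not by itself absurd --- your subsequent eigenvalue argument for $\mathrm{diag}(e^t,e^{-t})$ is the one that actually closes part (1) --- and that the trivial representation (corresponding to $s=\pm\tfrac12$) is also class one and is tacitly excluded from the statement.
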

\begin{proof}
    See \cite[Theorem 8, page 123]{Lang} or \cite[page 36]{Knapp} or \cite[Chapter 5]{Lubotzky}.
\end{proof}
Lastly, for further use in the proof of our main theorem, we give explicit sets of generators of the Lie algebras $\mathfrak{so}(3)$ and $\mathfrak{sl}(2,\mathbb{R})$ exhibiting structure constants which are  convenient to our purposes. 
\begin{lemma}\label{Lemma: a useful basis of so(3)}
    Let $\tau_1,\tau_2,\tau_3\in \mathfrak{so}(3)$ be the generators of the Lie algebra $\mathfrak{so}(3)$ given by
    \begin{align*}
        \tau_1:=\begin{pmatrix}
            0&0&0\\
            0&0&1\\
            0&-1&0
         \end{pmatrix},\qquad 
        \tau_2:=\begin{pmatrix}
            0&0&-1\\
            0&0&0\\
            1&0&0
        \end{pmatrix},\qquad
        \tau_3:=\begin{pmatrix}
            0&1&0\\
            -1&0&0\\
            0&0&0
            \end{pmatrix}. 
    \end{align*}
    Then, the following commuting relations hold:
    \begin{align*}
        [\tau_1,\tau_2]=-\tau_3 \qquad [\tau_2,\tau_3]=-\tau_1 \qquad [\tau_3,\tau_1]=-\tau_2.
    \end{align*}
\end{lemma}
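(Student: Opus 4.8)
The plan is to verify the three commutation relations by direct matrix computation, since $\mathfrak{so}(3)$ sits inside $\operatorname{Mat}_3(\mathbb{R})$ and each bracket $[\tau_i,\tau_j]=\tau_i\tau_j-\tau_j\tau_i$ is a concrete product of two $3\times 3$ matrices. I would first compute $[\tau_1,\tau_2]$: multiplying out $\tau_1\tau_2$ and $\tau_2\tau_1$ and subtracting yields the matrix with a $1$ in the $(2,1)$ entry and a $-1$ in the $(1,2)$ entry, which is exactly $-\tau_3$. The remaining two brackets could be handled the same way, but this would mean grinding through three independent $3\times 3$ multiplications.

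Rather than do that, I would exploit the uniform description of the generators. Writing $\epsilon_{abc}$ for the Levi--Civita symbol, one checks immediately from the displayed matrices that $(\tau_a)_{ij}=\epsilon_{aij}$ for $a=1,2,3$; that is, $\tau_1,\tau_2,\tau_3$ are precisely the standard infinitesimal rotations about the three coordinate axes. Then all three brackets can be obtained at once from the contraction identity $\sum_c\epsilon_{cab}\epsilon_{cij}=\delta_{ai}\delta_{bj}-\delta_{aj}\delta_{bi}$.

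Concretely, $[\tau_a,\tau_b]_{ij}=\sum_k\big(\epsilon_{aik}\epsilon_{bkj}-\epsilon_{bik}\epsilon_{akj}\big)$, and after reindexing each $\epsilon$ cyclically this collapses to $\delta_{aj}\delta_{bi}-\delta_{ai}\delta_{bj}=-\sum_c\epsilon_{abc}\,\epsilon_{cij}=-\sum_c\epsilon_{abc}(\tau_c)_{ij}$. Hence $[\tau_a,\tau_b]=-\sum_c\epsilon_{abc}\,\tau_c$, and specializing $(a,b)$ to $(1,2),(2,3),(3,1)$ gives the three asserted relations $[\tau_1,\tau_2]=-\tau_3$, $[\tau_2,\tau_3]=-\tau_1$, $[\tau_3,\tau_1]=-\tau_2$ simultaneously, the overall sign being an artifact of the chosen ordering of the axes.

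There is essentially no conceptual obstacle here, as the statement is a finite linear-algebra verification. The only thing to be careful about is the bookkeeping of signs and the cyclic reindexing of the Levi--Civita symbols; the reformulation via $(\tau_a)_{ij}=\epsilon_{aij}$ is what keeps this honest and makes the cyclic pattern of the three relations transparent, so I would present that route rather than three separate brute-force products.
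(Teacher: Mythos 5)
Your argument is correct: the identification $(\tau_a)_{ij}=\epsilon_{aij}$ matches the displayed matrices, and the contraction identity for the Levi--Civita symbol does yield $[\tau_a,\tau_b]=-\sum_c\epsilon_{abc}\tau_c$, which specializes to the three stated relations. The paper itself offers no proof of this lemma (it is stated as a routine verification, and in fact the $\mathfrak{so}(3)$ case is never used in the main arguments), so there is nothing to compare against; your uniform Levi--Civita computation is a perfectly valid and slightly slicker alternative to grinding out the three matrix products separately.
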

\begin{lemma}\label{Lemma: a useful basis of sl(2,R)}
    Let $\sigma_1,\sigma_2,\sigma_3\in\mathfrak{sl}(2,\mathbb{R})$ be the generators of the Lie algebra $\mathfrak{sl}(2,\mathbb{R})$ given by
    \begin{align*}
        \sigma_1:=\begin{pmatrix}
            0&\frac{1}{2}\\
            \frac{1}{2}&0
         \end{pmatrix},\qquad 
        \sigma_2:=\begin{pmatrix}
            -\frac{1}{2}&0\\
            0&\frac{1}{2} 
        \end{pmatrix},\qquad
        \sigma_3:=\begin{pmatrix}
            0&\frac{1}{2}\\
            -\frac{1}{2}&0
            \end{pmatrix}. 
    \end{align*}
    Then, the following commuting relations hold:
    \begin{align*}
        [\sigma_1,\sigma_2]=\sigma_3 \qquad [\sigma_2,\sigma_3]=-\sigma_1 \qquad [\sigma_3,\sigma_1]=-\sigma_2.
    \end{align*}
\end{lemma}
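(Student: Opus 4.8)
The plan is purely computational, so I expect no genuine obstacle. First I would observe that $\mathfrak{sl}(2,\mathbb{R})$ is the three-dimensional real vector space of traceless $2\times 2$ matrices; since each $\sigma_i$ has vanishing trace and the three matrices are manifestly linearly independent (their upper-triangular and off-diagonal patterns differ), they do form a basis, which justifies calling them generators. It then remains only to verify the three bracket relations by direct matrix multiplication, using $[\sigma_i,\sigma_j]=\sigma_i\sigma_j-\sigma_j\sigma_i$ and pulling out the common factor $\frac{1}{2}$ from each matrix.

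As a representative case, for the first bracket I would compute
$$\sigma_1\sigma_2=\frac{1}{4}\begin{pmatrix}0&1\\1&0\end{pmatrix}\begin{pmatrix}-1&0\\0&1\end{pmatrix}=\frac{1}{4}\begin{pmatrix}0&1\\-1&0\end{pmatrix},\qquad \sigma_2\sigma_1=\frac{1}{4}\begin{pmatrix}-1&0\\0&1\end{pmatrix}\begin{pmatrix}0&1\\1&0\end{pmatrix}=\frac{1}{4}\begin{pmatrix}0&-1\\1&0\end{pmatrix},$$
whence $[\sigma_1,\sigma_2]=\frac{1}{4}\begin{pmatrix}0&2\\-2&0\end{pmatrix}=\frac{1}{2}\begin{pmatrix}0&1\\-1&0\end{pmatrix}=\sigma_3$. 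The remaining two identities $[\sigma_2,\sigma_3]=-\sigma_1$ and $[\sigma_3,\sigma_1]=-\sigma_2$ follow by the identical procedure: multiplying out the relevant pairs of matrices and subtracting yields $\frac{1}{2}\begin{pmatrix}0&-1\\-1&0\end{pmatrix}$ and $\frac{1}{2}\begin{pmatrix}1&0\\0&-1\end{pmatrix}$ respectively. Since every step is a finite matrix product, the only thing to watch is the bookkeeping of the factors of $\frac{1}{2}$ and the signs; there is no conceptual difficulty, and the stated structure constants are confirmed.
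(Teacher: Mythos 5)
Your computation is correct: all three brackets check out, and the linear-independence remark justifying the word ``generators'' is a sensible addition. The paper states this lemma without proof, treating the verification as routine, and your direct matrix computation is exactly the intended (and essentially the only) argument.
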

\bibliographystyle{amsalpha} 
\bibliography{main} 
\end{document}